\def\S{\mathcal S}
\newcommand{\be}{\begin{equation}}
\newcommand{\ee}{\end{equation}}
\newcommand{\ba}{\begin{align}}
\newcommand{\ea}{\end{align}}
\newcommand{\abs}[1]{\lvert#1\rvert}
\DeclareMathOperator{\ind}{ind}
\newtheorem{theorem}{Theorem}[section]
\newtheorem{proposition}[theorem]{Proposition}
\newtheorem{lemma}{Lemma}[section]
\newtheorem{question}[lemma]{Question}
{\begin{list}{}{%
\settowidth{\labelwidth}{\textsf{{\it #1.}}}%
\setlength{\labelsep}{4mm}%
\setlength{\leftmargin}{\labelwidth}%
\addtolength{\leftmargin}{\labelsep}%
}}%
{\end{list}}
\newcommand{\upd}[1]{#1}
\title[Circulant determinants]{Prime power order circulant determinants}
\author[M.~J. Mossinghoff]{Michael J.  Mossinghoff}
\address{Center for Communications Research, Princeton, NJ, USA}
\email{m.mossinghoff@idaccr.org}
\author[C. Pinner]{Christopher Pinner}
\address{ Department of Mathematics\\
         Kansas State University\\
         Manhattan, KS 66506, USA}
\email{pinner@math.ksu.edu}
\keywords{Group determinant, circulant matrix, artiad, Lind Mahler measure}
\subjclass[2010]{Primary: 11C20, 15B36; Secondary: 11B83, 11C08, 11R06, 11R18, 11T22, 43A40}
\date{\today}
\begin{document}

\begin{abstract}
Newman showed that for primes $p\geq 5$ an integral circulant determinant  of prime power order $p^t$ cannot take the value $p^{t+1}$
once $t\geq 2.$
We show that many other values are also excluded.
In particular, we show that $p^{2t}$ is the smallest power of $p$ attained for any $t\geq 3$, $p\geq 3.$
We demonstrate the complexity involved by giving a complete description of the $25\times 25$ and $27\times 27$ integral circulant determinants.
The former case involves a partition of the primes that are $1\bmod5$ into two sets, Tanner's  \textit{perissads} and  \textit{artiads}, which were later characterized by E. Lehmer.
\end{abstract}

\maketitle

\section{Introduction}\label{secIntroduction}

The problem of characterizing the values taken by  an integral $n\times n$ integral circulant determinant
$$ D(a_0,\ldots ,a_{n-1})=\begin{pmatrix} a_0 & a_1 & \cdots & a_{n-1}\\
a_{n-1} & a_0 & \cdots & a_{n-2}\\ 
\vdots  & \vdots &  \ddots  &\vdots \\
a_1 & a_2 & \cdots & a_{0} \end{pmatrix} $$
was suggested by Olga Taussky-Todd  \cite{OTT} at  the meeting of the American Mathematical Society in Hayward, California, in April 1977.
These are the integral group determinants for the cyclic group of order $n$, which we denote by $\mathbb Z_n,$ and we
write
$$ \S(\mathbb Z_n) =\{ D(a_0,\ldots ,a_{n-1})\; :\; a_0,\ldots ,a_{n-1}\in \mathbb Z\}. $$
It is not known whether the integral group determinants determine the group, as with the group determinant  polynomial \cite{Formanek}. 
For a polynomial $F$ in $\mathbb Z[x]$ we write
\begin{equation*}\label{measuredefn}
M_n(F)=\prod_{z^n=1} F(z)
\end{equation*}
and observe, using Dedekind's factorization of the group determinant or via eigenvalues (see  for example \cite{Conrad,Frobenius,Newman1}), that
\be \label{measureform} D(a_0,\ldots ,a_{n-1})=M_n\left( \sum_{j=0}^{n-1} a_jx^j \right). \ee
This connects the circulant determinant to Lind's \cite{Lind} generalization of the Mahler measure, in this case to the group $\mathbb Z_n$ and polynomials in  $\mathbb Z[x]/\langle x^n-1\rangle$, see for example \cite{Norbert2} (or \cite{Lalin,luck}
for an alternative approach). The form \eqref{measureform}  makes it easy to see that $\S(\mathbb Z_n)$ is closed under multiplication.
A complete description of the integral group determinants for groups of order less than $16$ can be found in  \cite{15,smallgps}. Other groups have been considered in  \cite{dihedral,pgroups,dilum,Norbert,Heisenberg,dicyclic,Pigno1,S4,Yamaguchi3}, where even determining 
the smallest non-trivial integral group determinant for the group, the counterpart of the classical Lehmer Problem \cite{Lehmer}, can become complicated.
For cyclic groups Laquer \cite{Laquer} and Newman \cite{Newman1} showed that 
\begin{equation*}\label{coprime}
\{ m\in \mathbb Z\;:\; \gcd(m,n)=1\}\subseteq \S(\mathbb Z_n)
\end{equation*}
and $n^2\mathbb Z\subseteq \S(\mathbb Z_n),$ along with a divisibility condition for an $m\in \S(\mathbb Z_n)$:
\be \label{div} p\mid m,\; p^{t}\parallel n \;\; \Rightarrow \; p^{t+1}\mid m. \ee
Here and throughout $p$ denotes a prime.
For $n=p$  and $n=2p,$ $p$ odd,  they showed that \eqref{div} was both necessary and sufficient:
\begin{align*} \S(\mathbb Z_p) & =\{p^am\; :\; \gcd(m,p)=1,\; a=0 \text{ or } a\geq 2\},\\
 \S(\mathbb Z_{2p}) & =\{2^ap^bm\; :\; \gcd(m,2p)=1,\; a=0 \text{ or } a\geq 2,\; b=0 \text{ or } b\geq 2 \}. 
\end{align*}
Newman \cite{Newman1} showed this is also the case for $n=9$,
\[
\S(\mathbb Z_{3^2})=\{3^am\; : \; \gcd(3,m)=1,\; a=0 \text{ or } a\geq 3\}.
\]
In general however this condition is not sufficient. 
Newman \cite{Newman2} showed that  $p^3\not\in \S(\mathbb Z_{p^2})$ for any $p\geq 5.$ By extension one obtains that $p^{t+1}\not\in \S(\mathbb Z_{p^t})$ for any $t\geq 2$ when $p\geq 5.$
Newman's result shows that the smallest power of $p$ in $\S(\mathbb Z_{p^2})$ is $3^3$ for $p=3$ and $p^4$ for $p\geq 5$. For general $\mathbb Z_{p^t}$, $t\geq 3,$ we show that an adaptation of Newman's approach excludes not only $p^{t+1}$ but additional powers of $p$, enough to determine that the smallest achieved power of $p$ is always $p^{2t}$.
We  can also put restrictions on the multiples of smaller powers of $p$ that are achieved.

\begin{theorem}\label{thmpowers}
Suppose that $G=\mathbb Z_{p^t}$ with $p\geq 5$ and  $t\geq 2$
or $p=3$ and $t\geq 3.$  Then $p^{2t}\mathbb Z \subset  \S(G),$ but
$p^j\not \in \S(G)$ for $t+1\leq j \leq 2t-1.$
Moreover, suppose that   $p^{j}m\in \S(G)$ with $p\nmid m$ and $t+1\leq j\leq 2t-1.$ 
\begin{enumerate}[label=(\alph*)]
\item\label{Zpta}
If $p\geq 5,$ or $p=3$ and $j>t+1,$ then  there exist $2t-j$  prime powers $q_i^{a_i} \equiv 1 \bmod p^i,$ each $a_i$ odd, such that $\prod_{i=1}^{2t-j} q_i^{a_i} \mid m$.
\item\label{Zptb}
If $p=3$ and $j=t+1,$ then there are $t-2$ prime powers with  $q_i^{a_i} \equiv 1 \bmod 3^{i},$ each $a_i$ odd, such that $\prod_{i=2}^{t-1} q_i^{a_i} \mid m$.
\end{enumerate}
\end{theorem}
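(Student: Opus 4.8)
First, $p^{2t}\Z\subseteq\S(G)$ follows at once from the theorem of Laquer and Newman recalled above, with $n=p^{t}$, and the inclusion is proper since $\S(G)$ contains every integer coprime to $p$. Also $p^{j}\notin\S(G)$ for $t+1\le j\le 2t-1$ is the case $m=1$ of the ``moreover'' part, since that part then produces at least one prime power $q_i^{a_i}\ge 2$ dividing $m$ (namely $2t-j\ge1$ of them in case \ref{Zpta}, and $t-2\ge1$ of them in case \ref{Zptb}, using $t\ge3$). Hence it suffices to prove the ``moreover''. Write $v=D(a_0,\dots,a_{p^{t}-1})=M_{p^{t}}(F)$ with $F(x)=\sum_j a_jx^j$. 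From $x^{p^{t}}-1=\prod_{i=0}^{t}\Phi_{p^{i}}(x)$ we get $v=\prod_{i=0}^{t}N_i$, where $N_0=F(1)$ and $N_i=\prod_{\Phi_{p^{i}}(\zeta)=0}F(\zeta)=\Norm_{\mathbb{Q}(\zeta_{p^{i}})/\mathbb{Q}}\!\bigl(F(\zeta_{p^{i}})\bigr)$; for $i\ge1$ the field $\mathbb{Q}(\zeta_{p^{i}})$ is totally imaginary, so $N_i>0$ unless $F(\zeta_{p^{i}})=0$. As $p$ is totally ramified in $\mathbb{Q}(\zeta_{p^{i}})$, with $(p)=(\pi_i)^{\phi(p^{i})}$ for $\pi_i=1-\zeta_{p^{i}}$ and residue degree $1$, we have $v_p(N_i)=v_{\pi_i}\!\bigl(F(\zeta_{p^{i}})\bigr)$. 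If $p\nmid F(1)$ then $F(\zeta_{p^{i}})\equiv F(1)\not\equiv0\pmod{\pi_i}$ for all $i$, so $p\nmid v$; we may thus assume $p\mid F(1)$, in which case $\pi_i\mid F(\zeta_{p^{i}})$ for every $i$, so $j:=v_p(v)=\sum_{i=0}^{t}e_i$ with each $e_i:=v_p(N_i)\ge1$, and in particular $j\ge t+1$. Setting $\gamma_i=F(\zeta_{p^{i}})/\pi_i^{\,e_i}$, a $\pi_i$-adic unit, we get $N_i=\pm p^{e_i}\Norm(\gamma_i)$ with $\Norm(\gamma_i)$ a positive integer prime to $p$, and $m=\pm\bigl(F(1)/p^{e_0}\bigr)\prod_{i=1}^{t}\Norm(\gamma_i)$.

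The prime powers required by the theorem arise like this. For a prime $q\ne p$, the rational prime $q$ factors in $\Z[\zeta_{p^{i}}]$ into primes all of residue degree $f=\operatorname{ord}_{p^{i}}(q)$, so $v_q(\Norm(\gamma_i))$ is a multiple of $f$. Consequently, if every prime factor of $\Norm(\gamma_i)$ had even order modulo $p^{i}$, then $\Norm(\gamma_i)$ would be a perfect square; so whenever $\Norm(\gamma_i)$ is not a perfect square it has a prime factor $q$ of odd order $f=\operatorname{ord}_{p^{i}}(q)$, and then $q^{f}\mid\Norm(\gamma_i)\mid m$ with $f$ odd and $q^{f}\equiv1\bmod p^{i}$ --- exactly a prime power of the demanded type. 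It remains to control the levels $i$ at which $\Norm(\gamma_i)$ is a perfect square (in particular equal to $1$); since a prime power $\equiv1\bmod p^{i}$ is also $\equiv1\bmod p^{i'}$ for $i'<i$, it suffices to produce $2t-j$ distinct non-degenerate levels $i\in\{1,\dots,t\}$ (resp.\ $t-2$ of them in $\{2,\dots,t-1\}$).

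That is the core, and is where Newman's method is adapted. The tuple $(F(1),F(\zeta_p),\dots,F(\zeta_{p^{t}}))$ is constrained: it lies in the image of $\Z[x]/(x^{p^{t}}-1)$ inside $\prod_{i=0}^{t}\Z[\zeta_{p^{i}}]$, a subring of $p$-power index, so $F(\zeta_{p^{i}})$ is tied to $F(\zeta_{p^{i-1}})$ by congruences modulo powers of $\pi_i$ --- concretely via relations such as $\Norm_{\mathbb{Q}(\zeta_{p^{i}})/\mathbb{Q}(\zeta_{p^{i-1}})}\!\bigl(F(\zeta_{p^{i}})\bigr)\equiv F(\zeta_{p^{i-1}})$ to a high power of $\pi_i$. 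Expanding $F$ about $x=1$ and combining this with the factorization, one tracks how the valuations $e_i$ are forced to grow with $i$ whenever the successive $\gamma_i$ must be units, or more generally have square norm. The outcome is that degeneracy is expensive: a degenerate level $i\le 2t-j$ would force $\sum_{k=0}^{t}e_k\ge 2t$, contradicting $j\le 2t-1$, so no level $i\le 2t-j$ is degenerate, which yields the $2t-j$ prime powers of case \ref{Zpta}; in particular, if all of $\Norm(\gamma_1),\dots,\Norm(\gamma_t)$ are squares then $v_p(v)\ge 2t$. The lone exception is the bottom level $i=1$ when $p=3$: there $\phi(3)=2$ makes the forced valuation jump by only $1$, so once $j=t+1$ (which pins every $e_i$ to $1$) level $1$ can genuinely be degenerate, costing one prime power and leaving the guaranteed non-degenerate levels in $\{2,\dots,t-1\}$ --- precisely case \ref{Zptb} --- whereas the extra room available for $p\ge5$, or for $p=3$ with $j>t+1$, keeps level $1$ alive.

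The expected main obstacle is this inter-level bookkeeping: making quantitative how a constraint on $F(\zeta_{p^{i-1}})$ propagates through the $p$-power-index subring to force $e_i$ upward, and squeezing out the sharp conclusions above --- above all the bound $\sum_{k}e_k\ge 2t$ in the fully degenerate case and the exact size of the $p=3$ loss. The other ingredients --- the norm factorization, the divisibility $v_p(v)\ge t+1$, and the parity step yielding odd prime powers --- are by comparison routine.
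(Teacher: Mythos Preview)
Your setup is sound: the factorization $v=F(1)\prod_i N_i$, the observation that $p\mid F(1)$ forces $p\mid N_i$ for every $i$, and the parity step (if $\Norm(\gamma_i)$ is not a square then some prime $q\mid\Norm(\gamma_i)$ has odd order mod $p^i$, yielding the required prime power) are all correct and match the paper. The reduction to ``at least $2t-j$ levels must be non-degenerate'' is also right.

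The gap is in the mechanism you propose for the contradiction. You claim that a degenerate level forces $\sum_k e_k\ge 2t$, i.e.\ that degeneracy pushes the $p$-adic valuations $e_i$ up. But in fact the $e_i$ are already pinned down \emph{independently of degeneracy}: an easy argument (the paper's Lemma~\ref{Lemma1}) shows that when $t+1\le j\le 2t$ one has $e_0=j-t$ and $e_i=1$ for every $1\le i\le t$. So $\sum_k e_k=j$ is fixed from the start, and there is no room for it to grow. The contradiction must come from a different source, and the ``propagation through the $p$-power-index subring forcing $e_i$ upward'' picture is not how it works.

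What the paper actually does at a degenerate level is extract structural information about $\gamma_i$ itself, not just its norm: since every prime dividing $\Norm(\gamma_i)$ has even order mod $p^i$, each such prime is inert from the maximal real subfield, so the ideal $(\gamma_i)$ is fixed by complex conjugation; Kronecker's theorem then gives $\gamma_i=\omega_i^k g(\omega_i+\omega_i^{-1})$ for some integer polynomial $g$ (Lemma~\ref{gettingreal}). Using this real form at the highest degenerate level to write $F(x)$ explicitly, one evaluates at the lower degenerate levels, subtracts complex conjugates, and obtains a difference of two units whose exact $\pi$-adic valuation is computed by a careful induction. Newman's unit criterion (Lemma~\ref{Newman}: if $\pi_t^s\parallel u_1-u_2$ with $s$ odd then $s$ is a power of $p$) then yields the contradiction. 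Neither the real-subfield step nor Newman's criterion appears in your outline, and without them the argument does not close.
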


We should note that $p^{t+\ell}\parallel M_{p^t}(x-1+p^{\ell})$ for all $\ell\geq 1$
so that there will be some  $mp^j,$ $p\nmid m,$  with $t+1\leq j\leq 2t-1.$
While restrictive, we do not expect the conditions on $m$ in Theorem~\ref{thmpowers} to be sufficient.
We illustrate this by characterizing the determinants for $\mathbb Z_{25}$ and $\mathbb Z_{27}$, the smallest unresolved cases of odd prime power cyclic group determinants.
Even for these small cases the situation becomes quite complicated. We shall use the polynomial form \eqref{measureform}
and split the  product over the $n=p^t$th roots of unity into $t+1$ integer norms:
$$ M_{p^t}(F) =F(1) \prod_{k=1}^t N_k(F),\;\; \;\;\;N_k(F):= \prod_{\stackrel{j=1}{p\nmid j}}^{p^k} F(\omega_k^j),\;\;\;\;\; \omega_k:=e^{2\pi i/p^k}. $$
From Theorem \ref{thmpowers} it follows that if $5^3m\in\mathcal{S}(\mathbb Z_{25})$ with $5\nmid m$, then $m$ must be divisible by a prime $q \equiv 1 \bmod 5$.
Similarly, if $3^4m \in\mathcal{S}(\mathbb Z_{27})$ and $3\nmid m$, then $m$ must be divisible by a prime $q\equiv 1 \bmod 9$ or the cube of a prime $q=4$ or $7 \bmod 9$, and if $3^5m\in\mathcal{S}(\mathbb Z_{27})$ and $3\nmid m$, then $m$ must be divisible by a prime $q\equiv 1 \bmod 3.$
However, not all $m$ of these forms can be achieved in the corresponding set $\mathcal{S}(G)$.
For $\S(\mathbb Z_{25})$ we get $5^3qm,$ $5\nmid m,$ for $134$ of the $163$ primes $q\equiv 1\bmod 5$ less than $5000$, but not $5^3q$ (nor any $5^3 q^k$) for the remaining $29$.
Similarly for $\S(\mathbb Z_{27})$ we get $3^4qm,$ $3\nmid m,$ for $76$ of the $109$ primes $q\equiv 1 \bmod 9$ less than $5000$,  but not $3^4q^k$ for the remaining $33$ such primes $q$, nor for any primes $q\equiv 4$ or $7 \bmod 9$.
For this group we also get $3^5qm,$ $3\nmid m$,  for $251$ of the $330$ primes $q\equiv 1 \bmod 3$ less than $5000$, but not $3^5q^k$ for the other $79$.

For the analysis of $\mathcal{S}(\mathbb Z_{25})$, we partition the primes $q\equiv 1 \bmod 5$ into two sets:
\begin{equation}\label{eqn5Types12}
\begin{split}
\mathcal{T}^{(5)}_1&=\{ q=N_1(1+(x-1)^3h(x))\; : \;h\in \mathbb Z[x],\; 5\nmid h(1)\},\\
\mathcal{T}^{(5)}_2&=\{ q=N_1(1+(x-1)^3h(x))\; : \; h\in \mathbb Z[x],\;5\mid h(1)\}.
\end{split}
\end{equation}
We refer to these respectively as \textit{Type~1} or \textit{Type~2} (with respect to the prime $5$).
We show in Lemma~\ref{lemNormRep} that $\mathcal{T}^{(5)}_1$ and $\mathcal{T}^{(5)}_2$ are disjoint, and that every $q\equiv 1 \bmod 5$ lies in one of these two sets.
A somewhat similar division of primes was needed to describe the integral group determinants for  $\mathbb Z_{12}$ and $\mathbb Z_2 \times \mathbb Z_{6}$ in \cite{smallgps} and $\mathbb Z_{15}$ in \cite{15}.

\upd{The partition  $\mathcal{T}^{(5)}_1$ and $\mathcal{T}^{(5)}_2$ first arose in some work of Tanner well over a century ago \cite{Tanner1,Tanner2}.
Tanner called these sets the \textit{perissads} and \textit{artiads} respectively (after \begin{greek}peritt\'os\end{greek} and \begin{greek}\'artios\end{greek}, Greek for odd and even).
The artiads appear in fact as the sequence A001583 in the OEIS \cite{OEIS}.
In Lemma~\ref{artiad} below we show that our sets \eqref{eqn5Types12} coincide with  Tanner's original definition (given in terms of the coefficients of the Jacobi function).
In 1966, E.~Lehmer \cite{ELehmer} characterized the artiads as those primes $q\equiv 1\bmod 5$ where the roots of $x^2-x-1$ mod $q$ are quintic residues mod $q$, or in terms of Fibonacci numbers that $q\mid F_{(q-1)/5}.$
In terms of Dickson's representation \cite{Dickson1} for primes $q=1\bmod5$,
\be \label{Dicksonform}  16q=x^2+50u^2+50v^2+125w^2, \;\; x\equiv 1\bmod 5,\;\; xw=(v-2u)^2-5u^2, \ee
Lehmer showed that they are the $q$ for which $5\mid w.$}

With this we can state our characterization of $\mathcal{S}(\mathbb Z_{25})$.
In particular, one may classify the primes $q\equiv 1\bmod 5$ as perissads or artiads depending on whether $5^3q$ is a $25\times 25$ integer circulant determinant.

\begin{theorem}\label{thmZ25} In addition to $5^4\mathbb Z$ and the integers coprime to $5$, the elements of $\S(\mathbb Z_{25})$ are those of the form $5^3qm$ where $\gcd(m,5)=1$ and  $q\equiv 1 \bmod 5$ is a perissad.
\end{theorem}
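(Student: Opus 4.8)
The plan is to establish two containments. First, that every element of the stated form lies in $\S(\mathbb Z_{25})$; second, that no other elements do. Throughout, I would exploit the factorization $M_{25}(F) = F(1)\,N_1(F)\,N_2(F)$, where $N_1(F) = \prod_{5\nmid j} F(\omega_1^j)$ with $\omega_1 = e^{2\pi i/5}$ is the norm from $\mathbb Z[\zeta_5]$ and $N_2(F)$ is the norm from $\mathbb Z[\zeta_{25}]$. The coprime integers are already known to lie in $\S(\mathbb Z_{25})$ by Laquer--Newman, and $5^4\mathbb Z \subset \S(\mathbb Z_{25})$ is the $p^{2t}\mathbb Z$ containment from Theorem~\ref{thmpowers}; since $\S(\mathbb Z_{25})$ is closed under multiplication, the achievability half reduces to producing a single polynomial realizing $5^3 q$ for each perissad $q$.

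For the achievability direction, let $q$ be a perissad, so by the definition in \eqref{eqn5Types12} there is $h\in\mathbb Z[x]$ with $5\nmid h(1)$ and $q = N_1(1 + (x-1)^3 h(x))$. I would take $F(x) = 1 + (x-1)^3 h(x)$, or a suitable small modification of it, and compute the three factors: $F(1) = 1$; $N_1(F) = q$ by construction; and the main point is to show $N_2(F) = 5^3$ up to sign, or more precisely that $F$ can be adjusted (by adding a multiple of $\Phi_{25}(x)$ or of $x^5-1$, which does not change $F(1)$ or $N_1(F)$) so that $N_2(F) = \pm 5^3$. The condition $5\nmid h(1)$, i.e. $(x-1)^3 \parallel F(x)-1$ in the relevant local sense, should force the $\omega_2$-valuation of $F$ to be exactly $3$ at each primitive $25$th root of unity $\omega_2$ — since $\omega_2 - 1$ generates the prime above $5$ in $\mathbb Z[\zeta_{25}]$ with $v(\omega_2-1)=1$ and there are $20$ such roots, the $5$-adic valuation of $N_2(F)$ works out to $3\cdot 20/20\cdot(\text{ramification bookkeeping}) = 3$; one then needs the residual norm to be a unit, which is where the freedom to modify $F$ enters. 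This is essentially a Lind--Mahler measure computation at $\mathbb Z_{25}$ and is the first place real work is needed, but it is of a type already handled in Lemma~\ref{lemNormRep} and related lemmas on norm representations.

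For the exclusion direction, suppose $D = D(a_0,\ldots,a_{24})\in\S(\mathbb Z_{25})$ is divisible by $5$ but is not in $5^4\mathbb Z$. By the divisibility condition \eqref{div} with $p=5$, $t=2$, the power of $5$ dividing $D$ is at least $5^3$, so $D = 5^3 m'$ with either $5\nmid m'$ or $5\parallel m'$ — and in the latter subcase $D=5^4(m'/5)$ contradicts our assumption unless $m'/5$ is not an integer, so in fact $5^3\parallel D$ and $D = 5^3 m$ with $5\nmid m$. By Theorem~\ref{thmpowers}\ref{Zpta} (the case $p=5$, $t=2$, $j=3$, giving $2t-j=1$ prime power), $m$ is divisible by some prime power $q^a \equiv 1\bmod 5$ with $a$ odd. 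I would then refine this: writing $F(x) = \sum a_j x^j$ and using that $5^3\parallel M_{25}(F)$ together with $5\mid F(1)N_1(F)N_2(F)$ with the valuation constraints at each level, one shows $F(1)$ is coprime to $5$ (else \eqref{div} pushes the power higher), $v_5(N_1(F)) = 0$, and $v_5(N_2(F)) = 3$; the condition $5\parallel q$-part being odd-power and the structure of $F \bmod 5$ then forces $F \equiv 1 + (x-1)^3 h(x) \bmod(5, x^5-1)$ for some $h$, and tracing the congruence shows $5\nmid h(1)$, whence $q$ (or the relevant prime divisor of $m$) is a perissad by \eqref{eqn5Types12}. The main obstacle is precisely this last step — converting the global valuation data on $M_{25}(F)$ into the local statement that the relevant prime divisor of $m$ is a \emph{perissad} rather than merely $\equiv 1\bmod 5$, i.e. ruling out artiads. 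I expect this to hinge on the artiad characterization via $5\mid w$ in Dickson's form \eqref{Dicksonform}, or equivalently Lemma~\ref{artiad}, showing that an artiad $q$ would force an extra factor of $5$ into $N_2(F)$ incompatible with $5^3\parallel D$.
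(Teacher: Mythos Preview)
Your proposal has a fundamental error in the valuation bookkeeping that breaks both directions of the argument.

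The basic fact you are missing is that $N_k(F)\equiv F(1)^{\phi(5^k)}\bmod 5$ for each $k$ (since $F(\omega_k^j)\equiv F(1)\bmod (1-\omega_k)$ and $(1-\omega_k)\cap\mathbb Z=(5)$). Hence $5\mid N_k(F)$ forces $5\mid F(1)$, and conversely. In particular, the only way to get $5^3\parallel M_{25}(F)=F(1)N_1(F)N_2(F)$ is the distribution $5\parallel F(1)$, $5\parallel N_1(F)$, $5\parallel N_2(F)$; this is exactly Lemma~\ref{Lemma1} in the paper. Your proposed split $F(1)=1$, $N_1(F)=q$, $N_2(F)=5^3$ is impossible, and your exclusion-side claim that ``$F(1)$ is coprime to $5$, $v_5(N_1(F))=0$, $v_5(N_2(F))=3$'' is equally impossible. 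Concretely, for your proposed $F(x)=1+(x-1)^3h(x)$ one has $|F(\omega_2)|_5=1$ (since $|\omega_2-1|_5<1$), so $N_2(F)$ is a $5$-adic unit, not $5^3$.

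The paper's route is quite different. For achievability it first converts a Type~1 representation $q=N_1(1+(x-1)^3h(x))$, $5\nmid h(1)$, into the form $5q=N_1(1-x+5g(x))$ with $5\nmid g(1)$ (Lemma~\ref{primenorm}), and then builds $F$ from $f(x)=(1-x)+\Phi_5(x^5)g(x)$ so that $F(1)=N_2(F)=5$ and $N_1(F)=5q$ (Lemma~\ref{repping}). For exclusion, starting from $5\parallel F(1),N_1(F),N_2(F)$, it writes $F(\omega_2)=(1-\omega_2)\cdot(\text{Type~2 element})$ when $m_2:=N_2(F)/5$ is only Type~2, substitutes into the formula for $N_1(F)$, and shows this forces $m_1:=N_1(F)/5$ to have a Type~1 representation. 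The Type~1/Type~2 dichotomy (Lemmas~\ref{lemNormRep}, \ref{Type1or2}, \ref{primepowers}) is the engine here, not Dickson's form or any direct valuation count of $N_2$. Your sketch never engages with this mechanism, and the ``extra factor of $5$'' heuristic you propose for ruling out artiads does not correspond to what actually happens.
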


While this gives a complete description of the values achieved, it is not immediately obvious whether a prime $q\equiv 1 \bmod 5$ is Type~1 or Type~2. 
For a given $q$ one could factor it in $\mathbb Z[\omega_1]$ and use the straightforward algorithm of Lemma~\ref{primenorm} to decide its type.
Assuming randomness mod $5$ in the key coefficient in \eqref{eqn5Types12}, we might anticipate achieving $5^3q$ for about $4/5$ of the primes $q\equiv 1 \bmod 5$, and indeed empirical results support this heuristic (see also Lehmer \cite[Cor.~1]{ELehmer}).
Of the $163$ primes $q\equiv 1 \bmod 5$ up to $5000$, for convenience we list here the $134$ that are Type~1 (the perissads):
\begin{quote}
11, 31, 41, 61, 71, 101, 131, 151, 181, 191, 241, 251, 271, 311, 
331, 401, 431, 491, 541, 571, 601, 631, 641, 661, 701, 751, 761, 811, 
821, 911, 941, 971, 1021, 1051, 1061, 1091, 1171, 1181, 1201, 1231, 
1291, 1301, 1321, 1361, 1381, 1451, 1471, 1481, 1531, 1571, 1621, 
1721, 1741, 1801, 1811, 1831, 1861, 1901, 1931, 2011, 2081, 2111, 
2131, 2141, 2161, 2251, 2281, 2311, 2341, 2351, 2371, 2381, 2411, 
2441, 2521, 2531, 2551, 2621, 2671, 2711, 2731, 2741, 2791, 2801, 
2851, 2861, 2971, 3011, 3041, 3061, 3121, 3181, 3191, 3221, 3271, 
3301, 3331, 3361, 3371, 3391, 3461, 3491, 3511, 3541, 3581, 3631, 
3671, 3691, 3701, 3761, 3821, 3881, 3911, 3931, 4001, 4051, 4091, 
4111, 4201, 4211, 4231, 4241, 4261, 4271, 4421, 4451, 4561, 4591, 
4721, 4801, 4831, 4861, 4931, 4951,
\end{quote}
and the $29$ of Type~2 (the artiads):
\begin{quote}
211, 281, 421, 461, 521, 691, 881, 991, 1031, 1151, 1511, 1601, 1871, 1951, 2221, 2591, 3001, 3251, 3571, 3851, 4021, 4391, 4441, 4481, 4621, 4651, 4691, 4751, 4871.
\end{quote}
These values were reverse engineered by varying $h(x)$ in \eqref{eqn5Types12}.

For our analysis of $\mathbb Z_{27}$, we again divide the primes $q\equiv 1 \bmod 3$ into two sets:
\begin{align*}
\mathcal{T}^{(3)}_1 &= \{ q=N_1(\delta+3A(x-1)+9B)\; : \;A,B\in \mathbb Z,\; 3\nmid A\},\\
\mathcal{T}^{(3)}_2 &= \{ q=N_1(\delta+3A(x-1)+9B)\; : \; A,B\in \mathbb Z,\;3\mid A\},
\end{align*}
where $\delta =1$, $2$ or $4$ as $q\equiv 1$, $4$ or $7 \bmod 9.$
As above, we refer to these as {\em Type~1} or {\em Type~2} (with respect to the prime $3$).
We show in Lemmas~\ref{27normrep} and \ref{notboth} that $\mathcal{T}^{(3)}_1$ and $\mathcal{T}^{(3)}_2$ are disjoint and that every $q\equiv 1 \bmod 3$ lies in one of these two sets.

Again $\mathcal{T}^{(3)}_2$ is a known sequence, OEIS A014753, with several different characterizations.  For example, these are exactly the primes $q\equiv 1\bmod 3$ for which $4q=x^2+243y^2$ has a solution in integers, as can be seen by writing
$$ q=N_1(\delta +9a(x-1)+9B) \; \iff \; 4q=(2\delta +18B -27a)^2+243 a^2. $$
From \cite[ex.\ 9.23]{IrelandRosen} these are equivalently the $q\equiv 1\bmod 3$ for which $3$ is a cubic residue mod $q.$

We also need to further divide the Type~2 primes $q\equiv 1 \bmod 9$:
\begin{align*}
\mathcal{T}^{(3)}_3 &= \{ q=N_2(1+(x-1)^7h(x))\; : \;h\in \mathbb Z[x],\; 3\nmid h(1)\},\\
\mathcal{T}^{(3)}_4 &= \{ q=N_2(1+(x-1)^7h(x))\; : \; h\in \mathbb Z[x],\;3\mid h(1)\}.
\end{align*}
We refer to these as \textit{Type~3} and \textit{Type~4} (for the prime $3$).
We may now state our characterization of $\mathcal{S}(\mathbb Z_{27})$.

\begin{theorem}\label{thmZ27}
In addition to $3^6\mathbb Z$ and the integers coprime to $3$, the elements of $\S(\mathbb Z_{27})$ are as follows.
\begin{enumerate}[label=(\alph*)]
\item\label{thmZ27a}
The values with $3^4\parallel M_{27}(F)$ take the form $3^4mq$, with $3\nmid m$ and $q\equiv 1 \bmod 9$ a Type~1 prime.
\item\label{thmZ27b}
The values with $3^5\parallel M_{27}(F)$ take the form 
$3^5mq$, with $3\nmid m$, and $q\equiv 1 \bmod 3$ a Type~1 prime or $q\equiv 1 \bmod 9$ a Type~3 prime.
\end{enumerate}
\end{theorem}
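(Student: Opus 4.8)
The plan is to analyze everything $3$-adically through the factorization $M_{27}(F)=F(1)\,N_1(F)\,N_2(F)\,N_3(F)$ recalled in the introduction, where $N_k(F)=\mathrm{Norm}_{\mathbb{Q}(\omega_k)/\mathbb{Q}}(F(\omega_k))$. Since $3$ is totally ramified in each $\mathbb{Z}[\omega_k]$, with $\pi_k:=(\omega_k-1)$ the unique prime above it, $v_{\pi_k}(3)=2\cdot 3^{k-1}$, and residue degree $1$, we have $v_3(N_k(F))=v_{\pi_k}(F(\omega_k))$. Writing $F(x)=\sum_{i\ge 0}c_i(x-1)^i$, so that $F(\omega_k)=\sum_i c_i(\omega_k-1)^i$ with $v_{\pi_k}((\omega_k-1)^i)=i$, a short valuation count yields the trichotomy: if $3\nmid F(1)$ then $3\nmid M_{27}(F)$; if $3\mid F(1)$ but $3\nmid F'(1)$ then the $i=1$ term is the unique term of lowest $\pi_k$-valuation, so $v_{\pi_k}(F(\omega_k))=1$ for every $k$ and $v_3(M_{27}(F))=v_3(F(1))+3$; and if $3\mid F(1)$ and $3\mid F'(1)$ then $v_{\pi_k}(F(\omega_k))\ge 2$ for every $k$, so $v_3(M_{27}(F))\ge 7$. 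Hence $3^4\parallel M_{27}(F)$ forces $3\parallel F(1)$ and $3\nmid F'(1)$, while $3^5\parallel M_{27}(F)$ forces $9\parallel F(1)$ and $3\nmid F'(1)$; in both cases $N_k(F)=3u_k$ with $u_k=\mathrm{Norm}(\beta_k)$, $\beta_k:=F(\omega_k)/(\omega_k-1)\in\mathbb{Z}[\omega_k]$ coprime to $\pi_k$ with $\beta_k\equiv F'(1)\pmod{\pi_k}$, and $M_{27}(F)/3^{v_3}=(F(1)/3^a)\,u_1u_2u_3$.

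For the necessity half of (a), suppose $3^4\parallel M_{27}(F)$ and write $M_{27}(F)=3^4n$ with $3\nmid n$. By Theorem~\ref{thmpowers} we have $3^4\notin\mathcal S(\mathbb{Z}_{27})$, so $n\neq\pm1$, and Theorem~\ref{thmpowers}\ref{Zptb} even gives a prime power $\equiv1\bmod9$ with odd exponent dividing $n$; the work is to sharpen this to: some prime $q\equiv1\bmod9$ dividing $n$ is of Type~$1$. For a prime $q\mid n$ one determines which of the factors $F(1)/3,u_1,u_2,u_3$ it divides, and then uses the residue degrees of $q$ in the tower $\mathbb{Q}\subset\mathbb{Q}(\omega_1)\subset\mathbb{Q}(\omega_2)\subset\mathbb{Q}(\omega_3)$ (the orders of $q$ in the groups $(\mathbb{Z}/3^k)^\times$), together with the value of $v_q(n)$ and the congruence $\beta_k\equiv F'(1)\pmod{\pi_k}$, to force $q\equiv1\bmod9$ and to read off an explicit polynomial $G(x)=\delta+3A(x-1)+9B$ with $3\nmid A$ and $N_1(G)=q$; this last step invokes Lemmas~\ref{27normrep} and \ref{primenorm}, and Lemma~\ref{notboth} ensures $q$ cannot be forced to lie only in $\mathcal T^{(3)}_2$. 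Part (b) runs along the same lines: by Theorem~\ref{thmpowers}\ref{Zpta} (applied with $t=3$, $j=5$) $n$ has a prime factor $q\equiv1\bmod3$, and tracing which $u_k$ contributes it yields either a representation $q=N_1(\delta+3A(x-1)+9B)$ with $3\nmid A$ (when $q$ comes from $N_1(F)$) or, for $q\equiv1\bmod9$ coming from $N_2(F)$, a representation $q=N_2(1+(x-1)^7h(x))$ with $3\nmid h(1)$, i.e.\ a Type~$3$ prime; again Lemma~\ref{notboth} excludes the alternatives $\mathcal T^{(3)}_2,\mathcal T^{(3)}_4$.

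For sufficiency, given $m$ coprime to $3$ and $q$ of the prescribed type, I would exhibit $F$ with $M_{27}(F)=3^4mq$ (resp.\ $3^5mq$) and the stated exact power of $3$ by multiplying together, using that $\mathcal S(\mathbb{Z}_{27})$ is closed under multiplication: any $m$ coprime to $3$, which lies in $\mathcal S(\mathbb{Z}_{27})$ by the Laquer--Newman result quoted above; a polynomial of the shape $x-1+3^\ell$, for which the trichotomy gives $v_3(M_{27})=3+\ell$ exactly; and the polynomial witnessing that $q$ is Type~$1$ (resp.\ Type~$3$), where the condition $3\nmid A$ (resp.\ $3\nmid h(1)$) is precisely what keeps the relevant $v_{\pi_k}$ equal to $1$, so that the power of $3$ in the product is exactly $4$ (resp.\ $5$); the same computation shows a Type~$2$ or Type~$4$ prime would raise the power of $3$, explaining the exclusions.

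The main obstacle is the necessity direction: from the bare fact that $q$ divides one of the $u_k$, one must both pin down $q\bmod9$ and construct the explicit Type~$1$ (resp.\ Type~$3$) representation of $q$, and --- the hardest point --- rule out that $q$ is forced to lie only in $\mathcal T^{(3)}_2$ (resp.\ $\mathcal T^{(3)}_4$). This requires careful bookkeeping of $\pi_k$-valuations up the cyclotomic tower coordinated with the norm-representation Lemmas~\ref{27normrep}, \ref{notboth} and the algorithm of Lemma~\ref{primenorm}; by comparison the $3$-adic trichotomy and the sufficiency constructions are routine.
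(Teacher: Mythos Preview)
Your sufficiency construction does not work. You propose to obtain $3^4mq$ by multiplying the measure $m$ (via Laquer--Newman), the measure of $x-1+3^\ell$, and the measure of ``the polynomial witnessing that $q$ is Type~1.'' But $M_{27}(x-1+3)=3\cdot N_1(x+2)\cdot N_2(x+2)\cdot N_3(x+2)=3\cdot 3\cdot 57\cdot 261633=3^4\cdot 19\cdot 87211$, not $3^4$; and the Type~1 witness $G(x)=\delta+3A(x-1)+9B$ has $G(1)=\delta+9B$ coprime to $3$, so $M_{27}(G)$ is coprime to $3$ and equals $(\delta+9B)\cdot q\cdot N_2(G)\cdot N_3(G)$ with the last two factors uncontrolled. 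Multiplying these three measures gives $3^4$ times $m$ times a specific large integer, not $3^4mq$. The paper's route (Lemmas~\ref{lem9norm}, \ref{lem243s}) is genuinely different and is the real content of sufficiency: from the Type~1 (resp.\ Type~3) data one builds a \emph{single} $F$ with $F(1)=N_1(F)=N_3(F)=3$ and $N_2(F)=3q$ exactly, by splicing together $1-x$, the representation of $q$, and multiples of $\Phi_9,\Phi_{27}$, then adjusting by $(x^\ell-1)/(x-1)$ and $(x^{27}-1)/(x-1)$ to force $F(1)=3$. This is not the ``routine'' step.

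Your necessity plan is in the right spirit but misses the mechanism. You speak of picking a prime $q\mid n$ and ``reading off'' a Type~1 representation, but not every prime dividing $n$ is Type~1; the claim is only that \emph{some} factor is. The paper argues by contradiction: assume $m_2:=N_2(F)/3$ is only $9$-norm Type~2 and $m_3:=N_3(F)/3$ is only $27$-norm Type~2 (this reduction uses Lemmas~\ref{lempf3normrep} and~\ref{lem9normts}, which classify the minimal $q^f$). Then the Type~2 form \eqref{type56} for $F(\omega_3)/(1-\omega_3)$ lets one write $F(x)=u_1(x)(1-x-(x-1)^8t(x))+m_0\Phi_{27}(x)+(1-x)\Phi_{27}(x)s(x)$, and \emph{evaluating this at $\omega_2$} forces $F(\omega_2)/(1-\omega_2)$ to have a Type~1 representation, contradicting the assumption on $m_2$. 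The key idea you are missing is this passage from the $N_3$ hypothesis to a conclusion about $N_2$ through the single polynomial $F$; your valuation bookkeeping on individual primes does not capture it. (Also, Lemma~\ref{primenorm} is for $p=5$; the relevant lemmas here are \ref{27normrep} and \ref{lem3norms}.)
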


For convenience, we list the elements of these sets less than $5000$.
We might reasonably expect to witness $3^4q$ for about $2/3$ of primes $q\equiv 1 \bmod 9$, and $3^5q$ for approximately $20/27$ of the primes $q\equiv 1 \bmod 3$.
Indeed, of the $109$ primes $q\equiv 1 \bmod 9$ less than this bound, we observe that $76$ are Type~1:
\begin{quote}
19, 37, 109, 127, 163, 181, 199, 379, 397, 433, 487, 541, 631, 739, 811, 829, 883, 937, 1063, 1153, 1171, 1279, 1297, 1423, 1459, 1567, 1657, 1693, 1747, 1801, 1873, 1999, 2017, 2053, 2089, 2143, 2161, 2377, 2467, 2503, 2521, 2539, 2557, 2593, 2647, 2683, 2719, 2791, 2917, 2953, 3061, 3169, 3259, 3313, 3331, 3457, 3511, 3547, 3583, 3637, 3673, 3691, 3709, 3727, 3943, 4051, 4159, 4231, 4357, 4447, 4519, 4591, 4663, 4789, 4861, 4987,
\end{quote}
while $24$ are Type~3:
\begin{quote}
73, 271, 307, 523, 577, 613, 757, 919, 1009, 1531, 1621, 1783, 2179,
2287, 2971, 3079, 3187, 3529, 3853, 3889, 4177, 4339, 4951, 4969
\end{quote}
and $9$ are Type~4:
\begin{quote}
991, 1117, 1549, 2251, 2269, 2341, 3907, 4483, 4933.
\end{quote}
Of the primes $q\equiv 4 \bmod 9$ up to $5000$, $75$ are Type~1:
\begin{quote}
13, 31, 139, 157, 211, 229, 283, 337, 373, 409, 463, 571, 607, 733, 
751, 769, 823, 859, 877, 1039, 1129, 1201, 1237, 1291, 1327, 1381, 
1453, 1471, 1489, 1579, 1723, 1741, 1777, 1831, 1993, 2011, 2083, 
2137, 2281, 2371, 2551, 2659, 2677, 2731, 2749, 2767, 2857, 3037, 
3163, 3271, 3307, 3361, 3433, 3469, 3541, 3559, 3613, 3793, 3919, 
4027, 4153, 4243, 4261, 4297, 4423, 4441, 4549, 4567, 4603, 4621, 
4639, 4657, 4801, 4909, 4999,
\end{quote}
and $34$ are Type~2:
\begin{quote}
67, 103, 193, 499, 643, 661, 787, 967, 1021, 1093, 1399, 1543, 1597, 
1669, 1759, 1867, 2029, 2389, 2713, 2803, 3001, 3019, 3109, 3181, 
3217, 3253, 3343, 3631, 3739, 3847, 4099, 4513, 4729, 4783.
\end{quote}
For the $q\equiv 7 \bmod 9$ in the same range, $76$ are Type~1:
\begin{quote}
7, 43, 79, 97, 223, 241, 277, 313, 331, 349, 421, 457, 601, 673, 
691, 709, 907, 1033, 1051, 1069, 1087, 1123, 1213, 1231, 1429, 1447, 
1483, 1627, 1663, 1699, 1753, 1789, 1951, 1987, 2113, 2239, 2293, 
2311, 2347, 2437, 2473, 2689, 2707, 2797, 2833, 2887, 3121, 3229, 
3301, 3391, 3463, 3571, 3643, 3697, 3769, 3823, 3877, 3931, 4003, 
4021, 4111, 4129, 4201, 4219, 4273, 4327, 4363, 4507, 4561, 4597, 
4651, 4723, 4759, 4813, 4831, 4903,
\end{quote}
and $36$ are Type~2:
\begin{quote}
61, 151, 367, 439, 547, 619, 727, 853, 997, 1249, 1303, 1321, 1609, 
1861, 1879, 1933, 2131, 2203, 2221, 2383, 2617, 2671, 2851, 3049, 
3067, 3319, 3373, 3499, 3517, 3607, 3733, 3967, 4057, 4093, 4957, 
4993.
\end{quote}

Last, in 1882 Torelli \cite{Torelli} showed that $\S(\mathbb Z_{rs}) \subseteq \S(\mathbb Z_s)$.
We describe an explicit construction (via Lemma~\ref{reduction} below) for a special case of this: writing a dimension $p^t$ integral circulant determinant as a dimension $p^{t-1}$ integral circulant determinant.
We record this in the following proposition.
(See also \cite[Lem.~3.6]{Norbert} for a similarly constructive proof.)

\begin{proposition}\label{nested}
For any $t\geq 2$ we have $\S(\mathbb Z_{p^t})\subseteq \S(\mathbb Z_{p^{t-1}}).$
\end{proposition}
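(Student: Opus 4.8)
The plan is to carry out explicitly the partition of roots of unity underlying Torelli's inclusion, specialized to the tower $\Z_{p^t}\to\Z_{p^{t-1}}$. By \eqref{measureform} every element of $\S(\Z_{p^t})$ is $M_{p^t}(F)$ for some $F\in\Z[x]$, so it suffices to produce, for each such $F$, a polynomial $G\in\Z[x]$ with $M_{p^t}(F)=M_{p^{t-1}}(G)$. First I would group the $p^t$th roots of unity $z$ according to the value of $z^p$: for each $p^{t-1}$th root of unity $\eta$ the equation $z^p=\eta$ has exactly $p$ solutions, all of them $p^t$th roots of unity, and as $\eta$ ranges over the $p^{t-1}$th roots of unity these fibres partition the full set of $p^t$th roots of unity. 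Hence
$$ M_{p^t}(F)=\prod_{\eta^{p^{t-1}}=1}\ \bigl(\ \prod_{z^p=\eta}F(z)\ \bigr)=\prod_{\eta^{p^{t-1}}=1}g(\eta),\qquad g(y):=\Res_z\bigl(z^p-y,\ F(z)\bigr). $$

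The one point needing an argument is that $g\in\Z[y]$. This is immediate because $z^p-y$ is monic in $z$ with coefficients in $\Z[y]$: its resultant with $F\in\Z[z]\subset\Z[y][z]$ is a polynomial in $\Z[y]$, and on the level of values $g(y)=\prod_{z^p=y}F(z)$. Equivalently, $g(y)$ is the determinant of multiplication by the image of $F$ on the free $\Z[y]$-module $\Z[y][z]/\langle z^p-y\rangle$ of rank $p$, which both exhibits the integrality and gives an explicit $p\times p$ ``twisted circulant'' formula for $g$.

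Finally I would let $G\in\Z[x]$ be the reduction of $g$ modulo $x^{p^{t-1}}-1$, so that $\deg G<p^{t-1}$ and $g(\eta)=G(\eta)$ for every $p^{t-1}$th root of unity $\eta$; then $M_{p^t}(F)=\prod_{\eta^{p^{t-1}}=1}G(\eta)=M_{p^{t-1}}(G)\in\S(\Z_{p^{t-1}})$ by \eqref{measureform}, which is the desired inclusion (and makes explicit the construction promised via Lemma~\ref{reduction}). I do not anticipate a real obstacle here: the content lies entirely in the bookkeeping of the root-of-unity partition and in the routine check that $g$ has integer coefficients, which the resultant (or determinant) description above takes care of. The only things to watch are that the fibres $\{z:z^p=\eta\}$ really do have size $p$ and partition the $p^t$th roots of unity — which holds because $p\mid p^t$ — and that passing from $g$ to its reduction $G$ leaves all values at $p^{t-1}$th roots of unity unchanged.
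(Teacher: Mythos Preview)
Your proof is correct and is essentially the same as the paper's: the polynomial $g(y)=\Res_z(z^p-y,F(z))$ you build satisfies $g(x^p)=\prod_{j=0}^{p-1}F(\omega_1^j x)$, which is exactly the $H(x)$ of Lemma~\ref{reduction}, and the partition of roots of unity you describe is precisely the statement $N_i(F)=N_{i-1}(g)$ (together with $F(1)N_1(F)=g(1)$). The only cosmetic difference is that you verify $g\in\Z[y]$ via the resultant/determinant, while the paper argues that $H$ has integer coefficients because it is symmetric in the conjugates $\omega_1^j$ and is invariant under $x\mapsto\omega_1 x$; both arguments yield the same $g$.
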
 

In this paper we are concerned with powers of odd primes.
When $p=2$ one has easily that
$$ \S(\mathbb Z_2)=\{2^am\; : \;2\nmid m,\;  a=0 \text{ or } a\geq 2\}. $$
Kaiblinger \cite{Norbert} showed that for $k=2$ or $3$ 
$$ \S(\mathbb Z_{2^k})=\{2^am\; : \;2\nmid m,\;  a=0 \text{ or } a\geq k+2\} $$
and obtained upper and lower bounds for $k\geq 4$,
$$ \{2^am\; : \;2\nmid m,\;  a=0 \text{ or } a\geq 2k-1\} \subseteq  \S(\mathbb Z_{2^k})\subseteq \{2^am\; : \;2\nmid m,\;  a=0 \text{ or } a\geq k+2\}. $$
Recently Y. and N.~Yamaguchi \cite{Yamaguchi3} resolved the case $\S (\mathbb Z_{16}),$ showing that the odd multiples of $2^6$ take the form $2^6p^2m$ with $p\equiv 3 \bmod 8$, or $2^6pm$ with $p\equiv 5 \bmod 8$,
or $2^6pm$  with $p\equiv 1 \bmod 8$ and $p=a^2+b^2$ with $a+b\equiv \pm 3 \bmod 8$.
 
This article is organized in the following way.
Section~\ref{secProof11} contains the proofs of Theorem~\ref{thmpowers} and Proposition~\ref{nested}.
Sections \ref{secPf25} and~\ref{secPf27} hold the proofs of Theorems \ref{thmZ25} and~\ref{thmZ27} respectively.
Section~\ref{secComputations} summarizes some computations associated with these results.

\section{Proof of Theorem~\ref{thmpowers}  and  Proposition~\ref{nested} }\label{secProof11}

We write $\Phi_n(x)$ for the $n$th cyclotomic polynomial and $\phi(n)$ for Euler's totient function.
We also let $\pi_i:=1-\omega_i$; recall that $(p)=(\pi_i)^{\phi(p^i)}$ gives the factorization of $(p)$ in $\mathbb Z[\omega_i].$ 

\begin{lemma}\label{Lemma1}
Let $G=\mathbb Z_{p^t}$ with $t\geq 2$.
If $p^j\parallel M_G(F)$ with $t+1\leq j\leq 2t$ then $p^{j-t}\parallel F(1)$ and $p\parallel N_i(F)$ for $1\leq i\leq t$.
\end{lemma}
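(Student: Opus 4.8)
The plan is to push everything down to $\pi_i$-adic valuations and then reduce to a single case distinction governed by one coefficient. First I would observe that $N_i(F)$ is a field norm: the exponents $j$ with $p\nmid j$, $1\le j\le p^i$, index exactly the primitive $p^i$-th roots of unity, i.e.\ the Galois conjugates of $\omega_i$, so $N_i(F)=\Norm_{\mathbb Q(\omega_i)/\mathbb Q}(F(\omega_i))$. Let $v_i$ be the $\pi_i$-adic valuation on $\mathbb Z[\omega_i]$ with $v_i(\pi_i)=1$; from $(p)=(\pi_i)^{\phi(p^i)}$ one has $v_i(m)=\phi(p^i)\,v_p(m)$ for $m\in\mathbb Z$. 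Since $p$ is totally ramified in $\mathbb Q(\omega_i)$ there is a unique prime above $p$, hence all conjugates of a given element share the same $v_i$; plugging this into the product defining the norm gives the bookkeeping identity $v_p\bigl(N_i(F)\bigr)=v_i\bigl(F(\omega_i)\bigr)$. Writing $e:=v_p(F(1))$ and $n_i:=v_p(N_i(F))=v_i(F(\omega_i))$, the splitting $M_G(F)=F(1)\prod_{i=1}^t N_i(F)$ yields $j=e+\sum_{i=1}^t n_i$ (we may assume $M_G(F)\neq0$).

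Next I would record the basic lower bounds. Because $\omega_i\equiv1\pmod{\pi_i}$ we get $F(\omega_i)\equiv F(1)\pmod{\pi_i}$, so $\pi_i\mid F(\omega_i)$ exactly when $p\mid F(1)$. Since $j\ge t+1\ge1$ forces $p\mid M_G(F)$, it follows that $p\mid F(1)$, i.e.\ $e\ge1$, and therefore $n_i\ge1$ for every $i$.

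The crux is then a dichotomy on a single coefficient. Write $F(x)=F(1)+(x-1)g(x)$ with $g\in\mathbb Z[x]$, so $F(\omega_i)=F(1)-\pi_i g(\omega_i)$; note $v_i(F(1))=e\,\phi(p^i)\ge\phi(p^i)\ge2$, while $v_i(\pi_i g(\omega_i))=1+v_i(g(\omega_i))$. If $p\nmid g(1)$, then $g(\omega_i)\equiv g(1)\not\equiv0\pmod{\pi_i}$, so $v_i(\pi_i g(\omega_i))=1<v_i(F(1))$, and the ultrametric inequality gives $n_i=v_i(F(\omega_i))=1$ for every $i$; then $j=e+t$, so $p^{\,j-t}\parallel F(1)$ and $p\parallel N_i(F)$, as claimed. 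If instead $p\mid g(1)$, then $v_i(\pi_i g(\omega_i))\ge2$ and $v_i(F(1))\ge2$ force $n_i\ge2$ for every $i$, whence $j=e+\sum_{i=1}^t n_i\ge1+2t>2t$, contradicting $j\le 2t$. So only the first case occurs, and the lemma follows.

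There is no serious obstacle here; the point that needs care is the identity $v_p(N_i(F))=v_i(F(\omega_i))$, which genuinely uses total ramification of $p$ in $\mathbb Q(\omega_i)$ — not merely ramification — so that conjugate elements are $\pi_i$-equivalent. The only inequality doing real work is $\phi(p^i)\ge2$, valid for every odd $p$: it is exactly what makes the constant term $F(1)$ invisible to $n_i$ in the first case and what produces the clean $1+2t$ in the second, and the decisive structural observation is that the residue of $g(1)$ modulo $p$ is one condition controlling all $t$ of the valuations $n_i$ simultaneously.
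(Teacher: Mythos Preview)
Your proof is correct. Both arguments work through $\pi_i$-adic valuations and the identity $v_p(N_i(F))=v_i(F(\omega_i))$, but the decompositions differ. The paper sets $u:=v_p(N_t(F))$, writes $F(x)=(1-x)^u g_1(x)+\Phi_{p^t}(x)g_2(x)$ with $|g_1(\omega_t)|_p=1$, and then evaluates at $\omega_i$ for $i<t$, using $\Phi_{p^t}(\omega_i)=p$ together with the congruence $g_1(\omega_i)\equiv g_1(\omega_t)^{p^{t-i}}\pmod p$ to push the unit condition down from level $t$ to each lower level; the dichotomy is $u=1$ versus $u\ge 2$. You instead write $F(x)=F(1)+(x-1)g(x)$ and split on whether $p\mid g(1)$, so a single residue controls all $t$ levels simultaneously and no propagation step is needed. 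Your route is slightly more direct; the paper's has the minor advantage that the pivot $u$ is already one of the quantities $n_i$ in the final statement, so the case $u=1$ literally is the conclusion at level $t$. Both rely on $\phi(p^i)\ge 2$, so both implicitly use that $p$ is odd, consistent with the paper's standing assumption.
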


\begin{proof}
Suppose that $M_G(F)=p^{j}m$, $p\nmid m$ with $j\geq 1$.
Writing
$$ N_j(F):=\prod_{\stackrel{\ell=1}{\gcd(\ell,p)=1}}^{p^j} F(\omega_j^{\ell}),\;\; \omega_j:=e^{2\pi i/p^j},\;\;\;
N_j(F) \equiv F(1)^{\phi(p^j)} \bmod p, $$
we must have $p\mid F(1),N_1(F),\ldots ,N_t(F)$.
Suppose that $p^u\parallel N_t(F)$.
Then
$$ F(x) =(1-x)^u g_1(x) + \Phi_{p^t} (x) g_2(x), \;\; g_1,g_2\in \mathbb Z[x],\;\; |g_1(\omega_t)|_p=1, $$
and for each $1\leq i\leq t-1$ we have
 $$ F(\omega_i) =(1-\omega_i)^ug_1(\omega_i) + pg_2(\omega_i),\;\; g_1(\omega_i)=g_1(\omega_t^{p^{t-i}})\equiv g_1(\omega_t)^{p^{t-i}} \bmod p. $$
Hence $ |g_1(\omega_i)|_p=1,$ and 
if $u=1$ we get $\pi_i\parallel F(\omega_i)$ and $p\parallel N_i(F)$, all $1\leq i\leq t$, while if $u\geq 2$ we get $\pi_i^2\mid F(\omega_i)$ and $p^2\mid N_i(F)$ all $1\leq i\leq t$ and $p^{2t+1}\mid M_G(F)$. 
\end{proof}

We next state Newman's unit criterion \cite[Thm.~1]{Newman2}, which we will use in the proofs of Theorems \ref{thmZ25} and \ref{thmZ27}.

\begin{lemma} \label{Newman}
If $u_1,u_2$ are units in $\mathbb Z[\omega_t]$ with $\pi_t^s \parallel  (u_1-u_2)$ with $s$ odd, then $s=p^\ell$ for some $0\leq \ell < t$.
\end{lemma}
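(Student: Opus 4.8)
The plan is to reduce to a single unit and then play off complex conjugation against Kronecker's theorem. Since $u_1-u_2=u_2(u_1u_2^{-1}-1)$ and $u_2$ is a unit, it is equivalent to prove: if $u\in\mathbb Z[\omega_t]$ is a unit with $\pi_t^{s}\parallel(u-1)$ and $s$ odd, then $s=p^{\ell}$ for some $0\le\ell<t$. Write $v$ for the valuation at the prime $\mathfrak p=(\pi_t)$, so that $(p)=\mathfrak p^{\phi(p^t)}$ and $\pi_t^{s}\parallel x\iff v(x)=s$; note $s$ odd forces $1\le s<\infty$. The first input is that $\xi:=u/\bar u$ is a root of unity: it is a unit (hence an algebraic integer) all of whose conjugates have absolute value $1$, because complex conjugation is central in the abelian Galois group of $\mathbb Q(\omega_t)/\mathbb Q$, so Kronecker's theorem applies; as $p$ is odd, the roots of unity of $\mathbb Q(\omega_t)$ are exactly $\pm\omega_t^{k}$ with $0\le k<p^t$, so $\xi=\pm\omega_t^{k}$.

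The heart of the argument is an identity obtained by conjugating. Put $w=u-1$ and $w_0=w/\pi_t^{s}\in\mathbb Z[\omega_t]$, so $v(w_0)=0$, i.e.\ $w_0$ is a $\mathfrak p$-adic unit. From $\overline{\pi_t}=1-\omega_t^{-1}=-\omega_t^{-1}\pi_t$ and $s$ odd we get $\overline{\pi_t}^{\,s}=-\omega_t^{-s}\pi_t^{s}$, hence $\bar w=-\omega_t^{-s}\pi_t^{s}\,\overline{w_0}$; since also $\xi\bar w=\xi(\bar u-1)=u-\xi$, subtracting gives
\[ \xi-1 \;=\; w-\xi\bar w \;=\; \pi_t^{s}\bigl(w_0+\xi\omega_t^{-s}\overline{w_0}\bigr). \]
As the right side is $\pi_t^{s}$ times an algebraic integer, $v(\xi-1)\ge s\ge1$. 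This already excludes $\xi=-\omega_t^{k}$: then $\xi-1=-(1+\omega_t^{k})$, and $v(\xi-1)=0$ (immediate for $k=0$, and for $k\ne0$ from $1+\omega_t^{k}=(1-\omega_t^{2k})/(1-\omega_t^{k})$ together with $v_p(2k)=v_p(k)$, valid since $p$ is odd), contradicting $v(\xi-1)\ge1$. Hence $\xi=\omega_t^{k}$ for some $0\le k<p^t$.

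Finally I would reduce the bracketed factor modulo $\mathfrak p$ and then compute $v(\omega_t^{k}-1)$. Since $\omega_t\equiv1\pmod{\mathfrak p}$ we have $\xi\equiv1$ and $\omega_t^{-s}\equiv1$, and $\overline{y}\equiv y\pmod{\mathfrak p}$ for every $y\in\mathbb Z[\omega_t]$ because $\omega_t^{-i}-\omega_t^{i}=\omega_t^{-i}(1-\omega_t^{2i})\in(\pi_t)$; therefore $w_0+\xi\omega_t^{-s}\overline{w_0}\equiv2w_0\not\equiv0\pmod{\mathfrak p}$ (using $p$ odd and $w_0$ a $\mathfrak p$-unit). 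Thus the bracket is a $\mathfrak p$-unit, so $\xi-1\ne0$ (whence $\xi\ne1$, i.e.\ $k\ne0$) and $v(\xi-1)=s$ exactly. Writing $k=p^{\ell}k_0$ with $p\nmid k_0$ and $0\le\ell<t$, the element $\omega_t^{k}$ is a primitive $p^{\,t-\ell}$-th root of unity, so $1-\omega_t^{k}$ generates the unique prime above $p$ in $\mathbb Z[\omega_t^{k}]$, over which $\mathfrak p$ is totally ramified with index $\phi(p^t)/\phi(p^{t-\ell})=p^{\ell}$; hence $v(\omega_t^{k}-1)=p^{\ell}$, and so $s=p^{\ell}$ with $0\le\ell<t$, as required. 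The main obstacle is the middle step: recognizing that $u_1/u_2$ differs from its conjugate by a $2p^t$-th root of unity and that conjugation is trivial to leading order at $\mathfrak p$, which pins the bracketed factor down to a unit; the remainder is routine $\mathfrak p$-adic bookkeeping on the elements $\pm\omega_t^{k}-1$.
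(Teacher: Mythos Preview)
Your argument is correct. The reduction to a single unit $u=u_1u_2^{-1}$, the identification of $\xi=u/\bar u$ as a root of unity via Kronecker, the identity $\xi-1=\pi_t^{s}(w_0+\xi\omega_t^{-s}\overline{w_0})$ coming from $\overline{\pi_t}=-\omega_t^{-1}\pi_t$ together with $s$ odd, the elimination of the case $\xi=-\omega_t^k$ by checking $v(1+\omega_t^k)=0$, and the final computation $v(\omega_t^k-1)=p^{\ell}$ are all sound. The one implicit hypothesis you use throughout is that $p$ is odd (needed for $v(2)=0$ and for $v_p(2k)=v_p(k)$), which is consistent with how the lemma is applied in this paper.

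As for comparison: the paper does not prove this lemma at all---it simply states it and cites Newman \cite[Thm.~1]{Newman2}. Your proof is essentially Newman's original argument (the conjugation trick $\overline{\lambda}=\omega^{c}\lambda$ and the resulting $\pi$-adic bookkeeping), and indeed the paper alludes to this very mechanism later when it writes ``As in the proof of Newman's Lemma we \ldots\ take conjugates and write $\overline{\lambda_{\ell}}=\omega_{j_\ell}^{c_{\ell}}\lambda_{\ell}$'' in the proof of Theorem~\ref{thmpowers}. So you have supplied precisely the proof the paper chose to outsource.
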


Newman used his lemma to rule out $N_1(F)=N_2(F)=p$ when $F(1)=p.$ In our proof of Theorem~\ref{thmpowers} 
we will allow additional norm values that have a similar structure. Recall, if $q\neq p$ is a prime dividing $N_i(F)$, then its multiplicity in $N_i(F)$ must be a multiple of the order of $q \bmod p^i$.
(This follows by considering the factorization of primes in $\mathbb Z[\omega_i]$, see for example \cite[Thm.\ 2.13]{Washington}.)

\begin{lemma} \label{gettingreal}
Suppose that $F(x)$ in $\mathbb Z[x]$ has $p\parallel N_i(F)$ for some $i$, with 
\be \label{specialnorms} N_i(F)=p \;\;\; \text{  or  } \;\;\; N_i(F)=p\prod_{j=1}^{r} q_j^{f_{i,j}s_j},\;\; \text{$f_{i,j}$  all even}, \ee
where each $q_j$ is prime and $f_{i,j}$ is the minimal positive integer such that $q_j^{f_{i,j}}\equiv 1 \bmod p^i.$
Then for some $g(x)$ in $\mathbb Z[x]$ and integer $k$ we can write
$$ F(\omega_i) = \omega_i^k g(\omega_i+ \omega_i^{-1})(\omega_i-1). $$
\end{lemma}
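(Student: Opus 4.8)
The plan is to exhibit $g(\omega_i+\omega_i^{-1})$ as, up to a power of $\omega_i$, the quotient $F(\omega_i)/(\omega_i-1)$. Write $K=\mathbb{Q}(\omega_i)$ with maximal real subfield $K^{+}=\mathbb{Q}(\omega_i+\omega_i^{-1})$, whose rings of integers are $\mathbb{Z}[\omega_i]$ and $\mathbb{Z}[\omega_i+\omega_i^{-1}]$, and recall $(p)=(\pi_i)^{\phi(p^i)}$ is totally ramified with $\Norm(\pi_i)=\Phi_{p^i}(1)=p$. Since $N_i(F)$ equals the norm $\Norm_{K/\mathbb{Q}}(F(\omega_i))$, the hypothesis $p\parallel N_i(F)$ forces $\pi_i\parallel F(\omega_i)$, so $\alpha:=F(\omega_i)/(\omega_i-1)$ lies in $\mathbb{Z}[\omega_i]$ and the ideal $(\alpha)$ is coprime to $(p)$, of absolute norm $1$ in the first case of \eqref{specialnorms} and $\prod_{j}q_j^{f_{i,j}s_j}$ in the second. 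Once we find $k$ with $\omega_i^{-k}\alpha\in K^{+}$, this element lies in $\mathbb{Z}[\omega_i]\cap K^{+}=\mathbb{Z}[\omega_i+\omega_i^{-1}]$, hence equals $g(\omega_i+\omega_i^{-1})$ for some $g\in\mathbb{Z}[x]$, giving $F(\omega_i)=\omega_i^{k}g(\omega_i+\omega_i^{-1})(\omega_i-1)$ as required.

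The key step is to show that $(\alpha)$ is fixed by complex conjugation. In the first case this is trivial, as $(\alpha)=(1)$. In the second, $(\alpha)$ is a product of primes $\mathfrak{q}$ of $\mathbb{Z}[\omega_i]$ lying over the $q_j$, so it suffices to check that conjugation lies in the decomposition group of each such $\mathfrak{q}$. That group is cyclic, generated by the Frobenius $\sigma_{q_j}\colon\omega_i\mapsto\omega_i^{q_j}$ of order $f_{i,j}$; since $f_{i,j}$ is even and $(\mathbb{Z}/p^i\mathbb{Z})^{\times}$ is cyclic with a unique element of order $2$ (here $p$ is odd), we get $q_j^{f_{i,j}/2}\equiv-1\bmod p^i$, so $\sigma_{q_j}^{f_{i,j}/2}$ is precisely the conjugation $\omega_i\mapsto\omega_i^{-1}$. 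Hence conjugation is a power of $\sigma_{q_j}$, fixes $\mathfrak{q}$, and $(\bar\alpha)=\overline{(\alpha)}=(\alpha)$. I expect this to be the main obstacle: it is exactly here that the hypothesis that every $f_{i,j}$ be even is used, and the rest of the proof is comparatively routine.

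Finally, $(\bar\alpha)=(\alpha)$ makes $\alpha/\bar\alpha$ a unit of $\mathbb{Z}[\omega_i]$; since $K/\mathbb{Q}$ is abelian, conjugation commutes with every complex embedding, so all conjugates of $\alpha/\bar\alpha$ have absolute value $1$ and Kronecker's theorem gives $\alpha/\bar\alpha=\pm\omega_i^{\ell}$ for some $\ell$. Reducing mod $\pi_i$ (residue field $\mathbb{F}_p$, and $\alpha$ a $\pi_i$-unit) yields $\alpha\equiv\bar\alpha$, hence $\alpha/\bar\alpha\equiv1\bmod\pi_i$; since $2\not\equiv0\bmod\pi_i$ the sign is $+$, so $\alpha/\bar\alpha=\omega_i^{\ell}$. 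Choosing $k$ with $2k\equiv\ell\bmod p^i$ (possible as $p$ is odd) gives $\overline{\omega_i^{-k}\alpha}=\omega_i^{k}\bar\alpha=\omega_i^{k-\ell}\alpha=\omega_i^{-k}\alpha$, so $\omega_i^{-k}\alpha\in K^{+}$, completing the argument. The remaining ingredients used — that $\pi_i\parallel F(\omega_i)$, that $\mathbb{Z}[\omega_i]\cap K^{+}=\mathbb{Z}[\omega_i+\omega_i^{-1}]$, and the Kummer-type claim that a unit congruent to $1$ mod $\pi_i$ with all absolute values $1$ is a power of $\omega_i$ — are all standard.
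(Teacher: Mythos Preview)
Your proof is correct and follows essentially the same route as the paper: factor out $(\omega_i-1)$, use the evenness of the $f_{i,j}$ to show the remaining ideal is fixed by complex conjugation, apply Kronecker's theorem to the ratio $\alpha/\bar\alpha$, rule out the negative sign by reducing modulo $\pi_i$, and then twist by $\omega_i^{-k}$ to land in the maximal real subfield. The only cosmetic differences are that the paper treats the unit case $N_i(H)=1$ separately (citing the standard structure of cyclotomic units) and phrases the conjugation-invariance of the ideal in terms of prime splitting in $\mathbb{Z}[\omega_i+\omega_i^{-1}]$ rather than decomposition groups, but these are equivalent formulations of the same argument.
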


\begin{proof} Since $|F(\omega_i)|_p<1$ we  have
$$F(\omega_i)=(\omega_i-1)h_1(\omega_i)+F(1),\;\; \;p\mid F(1),\;\; \; p=\prod_{\substack{1\leq j< p^i\\p\nmid j}}(\omega_i^j-1), $$
for some $h_1(x)\in\mathbb{Z}[x]$, so we can plainly write 
$$F(\omega_i)=(\omega_i-1)H(\omega_i),\;\;\; N_i(H)=1 \text{ or } N_i(H)=\prod_{j=1}^{r} q_j^{f_{i,j}s_j},$$
for some  $H(x)\in\mathbb Z[x]$.
In the first case $H(\omega_i)$ is a unit and we can take a basis for the units in  $\mathbb Z[\omega_i]$ that is polynomial in $\omega_i+\omega_i^{-1}$: we know (see for example \cite[Cor.~4.13]{Washington}) that if $u(\omega_i)$ is a unit then $\overline{u(\omega_i)}=\omega_i^cu(\omega_i)$ for some integer $0\leq c<p^i,$ so if we replace 
$u(x)$ by $v(x)=x^ku(x)$ with $2k\equiv c \bmod p^i$, then $\overline{v(\omega_i)}=v(\omega_i)$ and $v(\omega_i)$ is in the  maximal real subfield whose ring of integers is $\mathbb Z[\omega_i+\omega_i^{-1}]$ (see \cite[Prop.~2.16]{Washington}).

In the second case we can obtain something similar.
Observe that if $f_{i,j}$ is even then $q_j^{f_{i,j}/2}\equiv \pm 1 \bmod p^i$ and (e.g., \cite[ex.~4.12]{Marcus}) the prime ideal
$(q_j)$ breaks into the same number of prime ideals in $\mathbb Z[\omega_i]$ as it does in  the maximal real subfield $\mathbb Z[\omega_i+\omega_i^{-1}].$ In particular we can assume those primes in $\mathbb Z[\omega_i]$ have the same
real generators and are unchanged by complex conjugation.
Hence if all the $f_{i,j}$ are even we have $\left(H(\omega_i)\right)=(\overline{H(\omega_i)})$ in $\mathbb Z[\omega_i]$ and $H(\omega_i)/\overline{H(\omega_i)}$ is an algebraic integer all of whose conjugates have absolute value~$1$.
From Kronecker's Theorem \cite{Kronecker} this must be a root of unity and $H(\omega_i)=\pm \omega_i^c \overline{H(\omega_i)}.$ Further we can assume that $H(\omega_i)= \omega_i^c \overline{H(\omega_i)},$ since $H(\omega_i)=-\omega_i^c\overline{H(\omega_i)}\equiv -H(1) \equiv -H(\omega_i) \bmod (1-\omega_i)$
contradicts $|2H(\omega_i)|_p=1.$ So, writing $\omega_i^c=\omega_i^{2k},$ we have that $\overline{\omega_i^{-k}H(\omega_i)}=\omega_i^{-k}H(\omega_i)$ is real and $H(\omega_i)=\omega_i^{k}g(\omega_i+\omega_i^{-1})$ for some $g(x)\in\mathbb Z[x].$
\end{proof}

\begin{proof}[Proof of Theorem~\ref{thmpowers}]
From Newman \cite[Thm.~4]{Newman1} we know that $p^{2t}\mathbb Z\subset \S(\mathbb Z_{p^t});$ to be explicit $M_{p^t}\left( x-1+ m\prod_{j=1}^t \Phi_{p^j}(x)\right)=mp^{2t}$  for any $m.$

Suppose that $p\geq 5$ and $t\geq 2$ or $p=3$ and $t\geq 3$ and that we have a value $M_{p^t}(F)=mp^{j}$ with  $p\nmid m$ and $t+1\leq j \leq 2t-1.$ Then by Lemma \ref{Lemma1} we must have $F(1)=m_1p^{j-t},$  some $m_1\mid m,$ 
and $p\parallel N_i(F)$ for all $1\leq i\leq t.$ Hence if 
fewer than $2t-j$  of the $N_i(F)$ are divisible by a $q_i^{f_i}\equiv 1 \bmod p^i$ with $f_i$ odd  (fewer than $t-2$ when $p=3$ and $j=t+1$) we must have at least $j-t+1$ of the $N_i(F)$ of the form \eqref{specialnorms} (with $i\geq 2$ when $p=3$, $j=t+1$). Hence, setting $k=j-t,$
we have $F(1)=m_1p^k$ and $N_{j_i}(F)$ of the form \eqref{specialnorms} for $k+1$ norms $1\leq j_k<\cdots <j_1< j_0\leq t$ 
(with $2\leq j_1<j_0\leq t$ when $p=3$ and $j=t+1$) where $1\leq k\leq t-1$.

Hence for $i=j_0$ we have  $F(\omega_{i})=(\omega_{i}-1)\omega_i^Ig(\omega_i+\omega^{-1}_i),$ 
where we can take $I \bmod p^i$ large enough that $x^Ig(x+1/x)\in \mathbb Z[x].$ So, from the value at 
$x=\omega_{j_0}$ and $x=1,$ there is some $h(x)$ in $\mathbb Z[x]$ such that
$$ F(x)=(x-1)x^Ig(x+1/x)+ m_1\prod_{i=0}^{k-1} \Phi_{p^{j_i}}(x)  + (x-1)\Phi_{p^{j_0}}(x)h(x). $$

For $\ell=1,\ldots,k,$ we set  $\lambda_{\ell}:=N_{j_{\ell}}(F)/\omega_{j_{\ell}}^I(\omega_{j_{\ell}}-1).$ Then for $x=\omega_{j_{\ell}}$ and ${I'}=-I \bmod p^t$ we can write
$$ \lambda_{\ell} = g(x+1/x) + p x^{I'} h(x) = g(x+1/x) + p \sum_{0\leq j\leq S} a_j (x-1)^j $$
for $\ell=1,\ldots,k-1,$ and for $\ell=k$ except we must add the term
$$ m_1x^{I'}p^k/(x-1)= p^{k-1} (x-1)^{\phi(p^{j_{k}})-1}(A+(x-1)s_1(x)), \;\; p\nmid A. $$
As in the proof of Newman's Lemma we use Lemma~\ref{gettingreal} to take conjugates and write $\overline{\lambda_{\ell}}=\omega_{j_l}^{c_{\ell}} \lambda_{\ell}$
for some $0\leq c_{\ell}  < p^{j_{\ell}}.$ Subtracting and  expanding the resulting polynomial in powers of $(x-1)$ gives
\begin{align}  (1-\omega_{j_{\ell}}^{c_{\ell}}) \lambda_{\ell} =\lambda_{\ell}-\overline{\lambda_{\ell}} & =p \sum_{\stackrel{1\leq i\leq S}{i \text{ odd}}} a_i(x-1)^i(1+x^{p^t-i})+ p \sum_{\stackrel{1\leq i\leq S}{i \text{ even}}} a_i(x-1)^i(1-x^{p^t-i})\nonumber \\
 & \label{everything} = \sum_{1\leq i\leq S'} pA_i (x-1)^i, \;\;\; A_i\in \mathbb Z,
\end{align}
for $\ell=1,\ldots,k-1.$ For $\ell=k$ we have an additional term
\be \label{extra}  p^{k-1}(x-1)^{\phi(p^{j_{k}})-1}(2A+(x-1)s_2(x)). \ee
Notice that the left side of \eqref{everything} is zero if $p^{j_{\ell}}\mid c_{\ell}$ and exactly divisible by $\pi_{j_{\ell}}=1-\omega_{j_{\ell}}$ to the power $\gcd(c_{\ell},p^{j_{\ell}-1})$ otherwise (see for example \cite[Lem.~1]{Newman2}). 

Suppose first that $j=t+1$, $k=\ell=1.$ Then $p$ and hence $\phi(p^{j_1})$ many $\pi_{j_{1}}$ divide the terms in \eqref{everything}
while  exactly $\phi(p^{j_1})-1$ many $\pi_{j_1}$ divide the extra term \eqref{extra}. This rules out the left-hand side being zero, moreover $\phi(p^{j_1})-1=p^{j_{\ell}-1}(p-1)-1$
is plainly not a multiple of $p$ and is greater than $1$ (since we ruled out $j_1=1$ when $p=3$) and so cannot equal $\gcd(c_{\ell},p^{j_{\ell}-1})$.

Now suppose that $k\geq 2.$ The terms in  \eqref{everything}  and \eqref{extra} are all multiples of $p,$ so divisible by at least $\phi(p^{j_{\ell}})>p^{j_{\ell}-1}$ many $\pi_{j_{\ell}}.$ Hence the left-hand side  must be zero and we will
get a contradiction if the right-hand side is only divisible by a finite power of $\pi_{j_{\ell}}.$

We claim that the $\ell=1,\ldots,k-1$  successively give 
\be \label{successive}  p^{\ell} \mid A_i,\;\; 1\leq i \leq  \phi(p^{j_{\ell}}),\ee
Observe that if $p^{m_i} \parallel A_i$ then $\pi_{j_{\ell}}^{t_i}\parallel  pA_i (\omega_{j_{\ell}}-1)^i $ with $t_i=(1+m_i)\phi(p^{j_{\ell}})+i$ and the $t_i$ are distinct for $1\leq i \leq \phi(p^{j_{\ell}})$. 

Hence for $\ell=1$, if $p\nmid A_J$ for some  $1\leq J\leq \phi (p^{j_{1}})$, $J$ minimal, 
we have $t_J=\phi(p^{j_{1}})+J$ while the other $t_i$ have $i>J$ or $m_i\geq 1$ and $t_i>t_J.$ Hence we get that $t_J$ is the exact power of $\pi_{j_1}$ dividing the right-hand side and  a contradiction. 
So $p\mid A_i$ for all $1\leq i \leq \phi(p^{j_1})$.

Suppose that we have shown \eqref{successive} for $\ell=1,\ldots,L-1,$  $L\geq 2,$  and want to show it for $L$. We certainly know that $p^{L-1}\mid A_i$
all $1\leq i\leq \phi(p^{j_{L}})$, since $j_L<j_{L-1},$ so suppose that $p^{L-1}\parallel A_J$ for some $1\leq J\leq \phi(p^{j_L})$ with $J$ minimal, then plainly $t_J=L\phi(p^{j_L})+J<t_i$ for the other $t_i$ with $1\leq i\leq \phi(p^{j_{L-1}})$ since either $m_i\geq L$ or $m_i=L-1$ and $i>J$. 
If $\phi(p^{j_{L-s}})< i \leq\phi( p^{j_{L-s-1}})$ for some $1\leq s\leq L-2$ then $m_i\geq {L-s-1}$ and
$t_i> (L-s)\phi(p^{j_L})+ \phi(p^{j_{L-s}}) \geq (L-s+p^s)\phi(p^{j_L})\geq (L+1)\phi(p^{j_L})\geq t_J.$
If $i>\phi(p^{j_1})$ then $t_i> \phi(p^{j_1})\geq  \phi(p^{j_L +L-1})= \phi(p^{j_L})p^{L-1}\geq (L+1)\phi(p^{j_L})\geq t_J.$ Again a contradiction and the claim holds for $\ell=L.$

Hence for $\ell = k$ we have $p^{k-1} \mid A_i$ for all $1\leq i\leq \phi (p^{j_{k-1}})$ and 
$t_i\geq k \phi(p^{j_k}).$ For the $\phi(p^{j_{k-s}}) < i\leq \phi(p^{j_{k-s-1}})$ for some $1\leq s\leq k-2$ we
have $t_i> (k-s)\phi(p^{j_k})+ \phi(p^{j_{k-s}})\geq (k-s+p^s)\phi(p^{j_k})\geq k\phi(p^{j_k})$ and for the $i>\phi(p^{j_1})$
we have $t_i\geq i > \phi(p^{j_1})\geq p^{k-1} \phi(p^{j_k})\geq k\phi(p^{j_k}). $ But for $\ell=k$ we have an extra term
\eqref{extra} and the power of $\pi_{j_k}$ dividing that is exactly  $k\phi(p^{j_k}) -1$. Hence this is the power of $\pi_{j_k}$ dividing the right-hand side,  contradicting that it is zero. \end{proof}

Finally, for the proofs of Theorems \ref{thmZ25} and \ref{thmZ27} we shall need a lemma writing a $p^j$ norm as a $p^{j-1}$ norm.
Proposition~\ref{nested} then follows immediately.

\begin{lemma} \label{reduction}  If $F(x)$ is in $\mathbb Z[x]$ and
$ H(x)=\prod_{j=0}^{p-1} F(x\omega_1^j), $ then $H(x)=g(x^p)$ with
$g(x)$ in $\mathbb Z[x],$  $g(x)\equiv f(x) \bmod p$ and
$$N_{i}(F)=N_{i-1}(g),\;\; i\geq 2, \hspace{3ex} F(1)N_1(F)=g(1).$$
If  $p=3$ and  $F(x)=f_0(x^3) + x f_1(x^3) + x^2 f_2(x^3)$ then
\be \label{defggg}  g(x)= f_0(x)^3 + xf_1(x)^3+ x^2f_2(x)^3-3xf_0(x)f_1(x)f_2(x). \ee
\end{lemma}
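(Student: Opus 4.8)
The plan is to fix the primitive $p$-th root of unity $\zeta:=\omega_1$ and exploit the invariance $H(\zeta x)=H(x)$.

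First I would check that $H(x)=g(x^p)$ for some $g\in\mathbb Z[x]$. Since $\zeta^p=1$, replacing $x$ by $\zeta x$ cyclically permutes the factors of $H(x)=\prod_{j=0}^{p-1}F(x\zeta^j)$, so $H(\zeta x)=H(x)$; moreover $H(x)$ is a resultant $\Res_y(y^p-1,F(xy))$ of two polynomials with coefficients in $\mathbb Z[x]$, hence $H\in\mathbb Z[x]$ (alternatively its coefficients are Galois-invariant algebraic integers). Writing $H(x)=\sum_k c_kx^k$, the relation $H(\zeta x)=H(x)$ forces $c_k=0$ unless $p\mid k$, so $H(x)=g(x^p)$ for a unique $g\in\mathbb Z[x]$. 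For the congruence I would reduce modulo the prime $\pi_1=1-\zeta$ over $p$ in $\mathbb Z[\zeta]$: from $\zeta\equiv1\bmod\pi_1$ we get $F(x\zeta^j)\equiv F(x)\bmod\pi_1$, so $H(x)\equiv F(x)^p\bmod\pi_1$ in $\mathbb Z[\zeta][x]$; as both sides lie in $\mathbb Z[x]$ and $\mathbb Z[\zeta]/\pi_1\cong\mathbb F_p$, this is a congruence mod $p$, and Frobenius gives $F(x)^p\equiv F(x^p)\bmod p$. Thus $g(x^p)=H(x)\equiv F(x^p)\bmod p$, and comparing coefficients of $x^{pk}$ gives $g\equiv F\bmod p$.

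The norm identities rest on the observation that for any $w$ the $p$-th roots of $w$ are exactly $\{y\zeta^j:0\le j<p\}$ for a fixed choice of $y$ with $y^p=w$; hence $g(w)=g(y^p)=H(y)=\prod_{z^p=w}F(z)$. Taking $w=1$ gives $g(1)=\prod_{z^p=1}F(z)=F(1)\prod_{j=1}^{p-1}F(\zeta^j)=F(1)N_1(F)$. For $i\ge2$, I would use $\omega_{i-1}=\omega_i^{p}$ and $\zeta=\omega_i^{p^{i-1}}$ to write, for each $\ell$ with $p\nmid\ell$,
\[
g(\omega_{i-1}^{\ell})=g\bigl((\omega_i^{\ell})^{p}\bigr)=\prod_{m=0}^{p-1}F(\omega_i^{\ell}\zeta^m)=\prod_{m=0}^{p-1}F\bigl(\omega_i^{\,\ell+mp^{i-1}}\bigr).
\]
As $\ell$ runs over a set of representatives mod $p^{i-1}$ prime to $p$ and $m$ over $\{0,\dots,p-1\}$, the exponents $\ell+mp^{i-1}$ run bijectively over the residues mod $p^i$ prime to $p$, so (using that $F(\omega_i^n)$ depends only on $n\bmod p^i$) multiplying over all admissible $\ell$ gives $N_{i-1}(g)=N_i(F)$. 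I expect this exponent bookkeeping to be the one place needing care: injectivity follows by reducing $\ell+mp^{i-1}$ first mod $p^{i-1}$ and then mod $p$, and the cardinalities match since $\phi(p^{i-1})\cdot p=\phi(p^i)$.

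Finally, for the explicit $p=3$ formula I would set $a=f_0(x^3)$, $b=xf_1(x^3)$, $c=x^2f_2(x^3)$; since $\zeta^3=1$ one has $F(\zeta^jx)=a+\zeta^jb+\zeta^{2j}c$, so
\[
H(x)=\prod_{j=0}^{2}(a+\zeta^jb+\zeta^{2j}c)=a^3+b^3+c^3-3abc,
\]
the familiar $3\times3$ circulant determinant identity. Substituting the values of $a,b,c$ and then replacing $x^3$ by $x$ (valid because $H(x)=g(x^3)$) yields exactly \eqref{defggg}, which one checks is consistent with $g\equiv F\bmod 3$ via $f_i(x)^3\equiv f_i(x^3)\bmod 3$.
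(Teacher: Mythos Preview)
Your proof is correct and follows essentially the same route as the paper: invariance under $x\mapsto\omega_1 x$ for $H(x)=g(x^p)$, Galois symmetry for integrality, reduction mod $1-\omega_1$ together with Frobenius for the congruence $g\equiv F\bmod p$, and the bijection between $\{\omega_i^{\ell}\omega_1^{m}\}$ and the primitive $p^i$th roots of unity for the norm identities. The only addition is that you actually derive the explicit $p=3$ formula via the circulant identity $a^3+b^3+c^3-3abc$, which the paper states without proof.
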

\begin{proof}
To see this, observe that $\prod_{j=0}^{p-1} F(\omega_1^jx)$ is symmetric in the conjugates so will be in $\mathbb Z[x]$ and unchanged under $x\mapsto \omega_1 x$ so must be a polynomial in $x^p.$ Now $H(x)\equiv F(x)^p \bmod (1-\omega_1)$ and since the coefficients are integers $g(x^p)\equiv F(x)^p\equiv F(x^p) \bmod p.$
 For $i\geq 2$ the $x\omega_1^j$, $j=0$, \ldots, $p-1$ will run through
the primitive $p^i$th roots of unity as $x$ runs through the $\omega_i^{\ell}$, $1\leq \ell<p^{i-1}$, $p\nmid \ell$, i.e.,  the $x^p$ in $g(x^p)$ runs  through the primitive $p^{i-1}$th roots of unity. Hence $N_i(F)=N_{i-1}(g)$.
\end{proof}

\begin{proof}[Proof of Proposition~\ref{nested}]
Immediate  from Lemma \ref{reduction} since $M_{p^i}(F)=M_{p^{i-1}}(g).$ 
\end{proof}

As shown in Kaiblinger \cite[Lem.\ 3.6]{Norbert} we can equivalently think of $g(x)$ as the polynomial whose roots and lead coefficient are the $p$th power of those of $F(x).$
Writing group determinants as subgroup determinants is explored in \cite{Yamaguchi1,Yamaguchi2}.

\section{Proof of Theorem~\ref{thmZ25}}\label{secPf25}

Throughout this section, $\omega_i$ denotes a primitive $5^i$th root of unity.
Note that $\mathbb Z[\omega_i]$ is a unique factorization domain for $i=1$ and $2$ (see \cite[Thm.~11.1]{Washington}).
We begin with a way of using units to reduce elements in  $\mathbb Z[\omega_i].$

\begin{lemma}\label{lemNormRep}
Suppose that $f(x)\in \mathbb Z[x]$ with $5\nmid N_i(f)$.
\begin{enumerate}[label=(\roman*)]
\item \label{lemNormRep1} For some unit $u(\omega_i)$ and $h(x)\in \mathbb Z[x]$ we can write
\begin{equation}\label{genred}
u(\omega_i) f(\omega_{i}) =1+ (\omega_{i}-1)^3 h(\omega_{i}).
\end{equation}
We can take $u(x)=\pm x^I(1+x)^J(x^4+x)^{2K}$,   with $J=0$ or $1$ and $0\leq I,K\leq 4.$
\item \label{lemNormRep2}
$f(\omega_i)$ cannot have one representation \eqref{genred} where $5\mid h(1)$ and another where $5\nmid h(1).$
\end{enumerate}
\end{lemma}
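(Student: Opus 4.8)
The plan is to handle the two parts separately, using unique factorization in $\mathbb Z[\omega_i]$ (valid for $i=1,2$) and the explicit description of a fundamental system of units.

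For part~\ref{lemNormRep1}, since $5\nmid N_i(f)$ the element $f(\omega_i)$ is a unit in $\mathbb Z[\omega_i]$. The idea is to reduce it modulo the units $x^I$, $(1+x)^J$, $(x^4+x)^{2K}$ until it becomes $\equiv 1 \bmod (\omega_i-1)^3$. First I would recall that $\pi_i = 1-\omega_i$ generates the unique prime above $5$, and that $\mathbb Z[\omega_i]/(\pi_i^3)$ has a small, explicitly understood unit group: its order is $\phi(5^i)$-independent in the relevant range and one checks that the images of $\omega_i$, $1+\omega_i$ (a unit since $\Phi_5(-1)=1$, up to the right power), and the real unit $\omega_i^4+\omega_i = \omega_i^{-1}(1+\omega_i^2)$-type cyclotomic unit generate it. Concretely, working in $\mathbb Z[\omega_1]/(\pi_1^3)$ (and noting $\mathbb Z[\omega_2]\to\mathbb Z[\omega_2]/(\pi_2^3)$ factors through the degree-reduction of Lemma~\ref{reduction}, or arguing directly), one shows every unit is $\pm\omega_i^I(1+\omega_i)^J(\omega_i^4+\omega_i)^{2K}$ times something $\equiv 1\bmod \pi_i^3$, with $I,K$ ranging mod $5$ and $J\in\{0,1\}$. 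That gives \eqref{genred} with $u(x)$ of the stated shape. The bookkeeping — verifying that these specific units actually span $(\mathbb Z[\omega_i]/\pi_i^3)^\times/\{\pm1\}$ and that $I,J,K$ can be taken in the claimed ranges — is the routine but slightly delicate part; one reduces it to a finite check modulo $\pi_1^3$ since raising to the $p$-th power (Lemma~\ref{reduction}) relates the $i=2$ case to the $i=1$ case modulo $5$.

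For part~\ref{lemNormRep2}, suppose $f(\omega_i)$ had two representations, $u_1(\omega_i)f(\omega_i) = 1+(\omega_i-1)^3h_1(\omega_i)$ and $u_2(\omega_i)f(\omega_i) = 1+(\omega_i-1)^3h_2(\omega_i)$, with $5\nmid h_1(1)$ and $5\mid h_2(1)$. Dividing, $u_2/u_1$ is a unit $w(\omega_i)$ with
$$ w(\omega_i)\bigl(1+(\omega_i-1)^3h_1(\omega_i)\bigr) = 1+(\omega_i-1)^3h_2(\omega_i). $$
So $w(\omega_i) \equiv 1 \bmod \pi_i^3$, i.e. $w$ lies in the kernel of $(\mathbb Z[\omega_i])^\times \to (\mathbb Z[\omega_i]/\pi_i^3)^\times$. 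The key point is to pin down how $h(1) \bmod 5$ transforms: writing $w(\omega_i) = 1+(\omega_i-1)^3 c(\omega_i)$ one gets, modulo $\pi_i^4$,
$$ 1+(\omega_i-1)^3h_2(\omega_i) \equiv 1 + (\omega_i-1)^3\bigl(h_1(\omega_i)+c(\omega_i)\bigr), $$
hence $h_2(1)\equiv h_1(1) + c(1) \bmod \pi_i$, and since $\pi_i \mid 5$ in $\mathbb Z/5$-reductions this reads $h_2(1)\equiv h_1(1)+c(1)\bmod 5$. So I must show $c(1)\equiv 0\bmod 5$ for every unit $w$ with $w\equiv 1\bmod\pi_i^3$; equivalently, that $w\equiv 1\bmod\pi_i^3$ already forces $w\equiv 1\bmod \pi_i^4$ for units. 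This is the main obstacle, and it is exactly a Newman-type statement: a nontrivial unit cannot be $\equiv 1\bmod\pi_i^3$ but $\not\equiv 1\bmod\pi_i^4$. I would prove it via Lemma~\ref{Newman} (Newman's unit criterion): if $\pi_i^s\parallel(w-1)$ with $s=3$ then, since $3$ is odd, $3=5^\ell$ for some $\ell<i$ — impossible. So in fact $\pi_i^s\parallel(w-1)$ with $s\geq 3$ forces $s\geq 4$ (the next case $s=4$ being even is allowed), which after peeling off $\pm\omega_i^a$-type factors (handled as in the proof of Lemma~\ref{Newman}, replacing $w$ by $\omega_i^{-m}w$ to make it real, so $s$ is forced odd or we can pass to $w/\bar w$) gives $c(1)\equiv 0\bmod 5$. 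Thus the two representations have the same $h(1)\bmod 5$, proving \ref{lemNormRep2}.

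I expect part~\ref{lemNormRep1} to be mostly a finite computation in the small ring $\mathbb Z[\omega_i]/\pi_i^3$, while part~\ref{lemNormRep2} rests on Newman's criterion and the parity obstruction: the odd exponent $3$ cannot be a power of $5$, so no unit is exactly $\equiv 1\bmod \pi_i^3$ without being $\equiv 1 \bmod \pi_i^4$. That parity/Newman step is the crux.
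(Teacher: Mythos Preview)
Your approach to \ref{lemNormRep2} is correct and is essentially the paper's: the paper cross-multiplies and subtracts to get
\[
u_1(\omega_i)-u_2(\omega_i)=(\omega_i-1)^3\bigl(u_2(\omega_i)A-u_1(\omega_i)B+(1-\omega_i)h(\omega_i)\bigr)
\]
and applies Lemma~\ref{Newman} directly to $u_1-u_2$, while you divide and apply it to $w-1=u_2/u_1-1$. Since $u_1$ is a unit these have the same $\pi_i$-valuation, so the two arguments coincide. Your closing remarks about ``peeling off $\pm\omega_i^a$-type factors'' and ``making it real'' are unnecessary and a little confused: Newman's criterion applies verbatim to the pair $(w,1)$ and immediately excludes $s=3$; no further manipulation is needed.

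For \ref{lemNormRep1} there are two slips. First, $5\nmid N_i(f)$ does \emph{not} make $f(\omega_i)$ a unit in $\mathbb Z[\omega_i]$ (take $f=2$, $N_1(2)=16$); it only makes $f(\omega_i)$ a unit in $\mathbb Z[\omega_i]/\pi_i^3$, which is all your argument actually uses. Second, your invocation of Lemma~\ref{reduction} to relate $i=2$ to $i=1$ is misplaced: that lemma is about norms, not about the quotient $\mathbb Z[\omega_i]/\pi_i^3$. The paper instead does a direct three-step reduction, uniform in $i$: choose the sign and $J\in\{0,1\}$ so that $\pm(1+\omega_i)^Jf(\omega_i)\equiv 1\bmod\pi_i$ (using $1+\omega_i\equiv 2$); then choose $I\bmod 5$ to kill the $(\omega_i-1)$-coefficient (using $\omega_i^I=1+I(\omega_i-1)+\cdots$); then choose $K\bmod5$ to kill the $(\omega_i-1)^2$-coefficient (using $-(\omega_i^4+\omega_i)^2=1+(\omega_i-1)^2+\cdots$). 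At each step one uses $5=(\omega_i-1)^{\phi(5^i)}v(\omega_i)$ to absorb integer multiples of $5$ into the $(\omega_i-1)^3$-tail. Your abstract plan --- show that the listed units surject onto $(\mathbb Z[\omega_i]/\pi_i^3)^\times$ --- is valid and amounts to the same computation, but the paper's explicit stepwise reduction is cleaner and needs no separate treatment of $i=1$ versus $i=2$.
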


\begin{proof}
\ref{lemNormRep1} Writing $f(\omega_i) =a_0+\sum_{j\geq 1} a_j(\omega_i-1)^2$, note that $5\nmid a_0$ since $5\nmid N_i(f)$. Observing that
$(1+\omega_i)=2+(\omega_i-1)$ and $5=(\omega_i-1)^{\phi(5^i)}v(\omega_i)$ then, taking $J=0$ or $1$ as $a_0\equiv \pm 1$ or $\pm 2 \bmod 5$, we can write
$$ \pm (1+\omega_i)^Jf(\omega_i) = 1+ b_1(\omega_i-1) +\sum_{j\geq 2} b_j(\omega_i-1)^j. $$
Since 
$\omega_i^I=1+I(\omega_i-1) + \sum_{j\geq 2} t_j(\omega_i-1)^j$, taking $I\equiv -b_1 \bmod 5$ we get
 $$ \pm \omega_i^I(1+\omega_i)^Jf(\omega_i) = 1+ c_2(\omega_i-1)^2+\sum_{j\geq 3} c_j(\omega_i-1)^j. $$
Using 
\begin{align*}
-(\omega_i^4+\omega_i)^2 &= -4-20(\omega_i-1) -49(\omega_i-1)^2-76(\omega_i-1)^3-\cdots\\
&= 1+(\omega_i-1)^2+\sum_{j\geq 3} r_j(\omega_i-1)^j,
\end{align*}
and $K\equiv -c_2 \bmod 5$ we get
\[
\pm \omega_i^I(1+\omega_i)^J(-(\omega_i^4+\omega_i)^2)^Kf(\omega_i) = 1+\sum_{j\geq 3} d_j(\omega_i-1)^j. 
\]

\ref{lemNormRep2} Suppose that
\begin{equation*}\label{bothtypes}
\begin{split}
u_1(\omega_i) f(\omega_i) & =1+(\omega_i-1)^3(A+(\omega_i-1)g_1(\omega_i)), \;\; 5\mid A,\\
u_2(\omega_i) f(\omega_i)&=1+(\omega_i-1)^3(B+(\omega_i-1)g_2(\omega_i)), \;\; 5\nmid B, 
\end{split}
\end{equation*}
for two units $u_1(\omega_i)$, $u_2(\omega_i)$.
Then
$$ u_1(\omega_i)-u_2(\omega_i) = (\omega_i-1)^3\left( u_2(\omega_i) A-u_1(\omega_i)B+ (1-\omega_i)h(\omega_i)\right) $$
and $ (1-\omega_i)^3 \parallel u_1(\omega_i)-u_2(\omega_i), $
contradicting Lemma~\ref{Newman}. 
\end{proof}

We say that an integer  $m$ is a \textit{$5^i$-norm} if $m=N_i(f(x))$ for some $f(x)\in \mathbb Z[x].$

\begin{lemma} \label{primenorm}
If $5\nmid m$ and $m$ is a $5^i$-norm, then
\be \label{5form1} m=N_i(1+(x-1)^3h(x)) \ee
 for some $h(x)\in \mathbb Z[x]$. For $i=1$ we can also write
\be \label{5form} 5m = N_1(1-x +5g(x)), \ee
for some $g(x)\in \mathbb Z[x],$  with $5\nmid g(1)$ if $5\nmid h(1)$.
\end{lemma}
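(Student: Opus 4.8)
The plan is to read \eqref{5form1} straight off Lemma~\ref{lemNormRep}\ref{lemNormRep1}, and then to build \eqref{5form} by absorbing a factor $(1-x)$ into a ``$1+5(\cdot)$'' shape via the congruence $\Phi_5(x)\equiv(x-1)^4\bmod 5$.

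For \eqref{5form1}: write $m=N_i(f)$ with $5\nmid m$, so $5\nmid N_i(f)$, and Lemma~\ref{lemNormRep}\ref{lemNormRep1} gives a unit $u(\omega_i)$ and $h\in\mathbb Z[x]$ with $u(\omega_i)f(\omega_i)=1+(\omega_i-1)^3h(\omega_i)$. Since $N_i$ is the field norm from the totally imaginary field $\mathbb Q(\omega_i)$, the factors pair off as $z,\bar z$ and every value of $N_i$ is a product of terms $z\bar z\ge 0$; applied to the unit $u$ this forces the nonzero integer $N_i(u)$ to equal $1$. Taking $N_i$ of the displayed identity then gives $N_i\bigl(1+(x-1)^3h(x)\bigr)=N_i(u)\,N_i(f)=m$.

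For \eqref{5form} I would take $i=1$ and $F_0(x)=1+(x-1)^3h(x)$, so $N_1(F_0)=m$ by the previous step. Since $1-x=-(x-1)$, over $\mathbb Z[x]$ we have
\[
(1-x)F_0(x)=(1-x)-(x-1)^4h(x).
\]
Frobenius gives $x^5-1\equiv(x-1)^5\bmod 5$, hence $\Phi_5(x)\equiv(x-1)^4\bmod 5$; concretely $(x-1)^4=\Phi_5(x)+5r(x)$ with $r(x)=-x^3+x^2-x$, so $r(1)=-1$. Substituting,
\[
(1-x)F_0(x)=\bigl(1-x-5r(x)h(x)\bigr)-\Phi_5(x)h(x).
\]
Set $g(x)=-r(x)h(x)\in\mathbb Z[x]$ and $G(x)=1-x+5g(x)$; then $(1-x)F_0(x)-G(x)=-\Phi_5(x)h(x)$ vanishes at every primitive fifth root of unity, so $N_1(G)=N_1\bigl((1-x)F_0(x)\bigr)=N_1(1-x)\,N_1(F_0)=\Phi_5(1)\cdot m=5m$, which is \eqref{5form}. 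Finally $g(1)=-r(1)h(1)=h(1)$, so $5\nmid g(1)$ exactly when $5\nmid h(1)$.

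I do not anticipate a genuine obstacle: both halves are short once Lemma~\ref{lemNormRep} is in hand. The only point needing care is that $N_1$ depends on a polynomial only modulo $\Phi_5$, which is what licenses discarding the $\Phi_5(x)h(x)$ term while retaining the exact shape $1-x+5g(x)$ and still controlling $g(1)\bmod 5$ through the identity $(x-1)^4=\Phi_5(x)+5r(x)$.
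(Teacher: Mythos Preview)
Your proof is correct and follows essentially the same route as the paper: \eqref{5form1} comes directly from Lemma~\ref{lemNormRep} (you helpfully make explicit that $N_i(u)=1$ via the CM positivity of the norm), and \eqref{5form} comes from multiplying by $1-x$ and rewriting $(x-1)^4$ as $5$ times a polynomial modulo $\Phi_5(x)$. The only cosmetic difference is the choice of identity---the paper uses $(x-1)^4=5(x^2+1)^2-4\Phi_5(x)$ where you use $(x-1)^4=\Phi_5(x)+5(-x^3+x^2-x)$---but both give a $g$ with $g(1)\equiv h(1)\bmod 5$, so the arguments are interchangeable.
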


\begin{proof}
The form \eqref{5form1} follows at once from Lemma~\ref{lemNormRep}.
The second form \eqref{5form} stems from the observation that 
$5=N_1(1-x)$ and that $(x-1)^4=5(x^2+1)^2-4\Phi_5(x).$
\end{proof}

We call \eqref{5form1} a \textit{$5^i$-norm Type~1 representation} for $m$ if $5\nmid h(1),$ and a \textit{$5^i$-norm Type~2 representation} if $5\mid h(1)$. Crucial will be integers with a $5$- or $25$-norm Type~1  representation
(though in fact the $25$-norm gives us no new values).

\begin{lemma} \label{25normsare5norms} If $m$ has a $25$-norm Type~1 (or Type~2) representation, then 
$m$ has a $5$-norm Type~1 (or Type~2) representation.
\end{lemma}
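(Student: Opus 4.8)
The plan is to read off a $5$-norm representation of $m$ directly from its $25$-norm representation using the norm-reduction of Lemma~\ref{reduction}, and then to track what that reduction does to the \emph{type} of the representation. The point is that reducing $N_2$ to $N_1$ is, up to a unit in $\mathbb Z[\omega_1]$, harmless for the congruence class of the relevant coefficient modulo $5$.

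Concretely, I would begin from a $25$-norm representation $m=N_2(F)$ with $F(x)=1+(x-1)^3h(x)$, $h\in\mathbb Z[x]$; this is Type~1 if $5\nmid h(1)$ and Type~2 if $5\mid h(1)$. Applying Lemma~\ref{reduction} with $p=5$ produces $g(x)\in\mathbb Z[x]$ with $g\equiv F\pmod 5$ and $N_1(g)=N_2(F)=m$, so $m$ is automatically a $5$-norm; it only remains to bring $g(\omega_1)$ into the shape $1+(\omega_1-1)^3h^{\ast}(\omega_1)$ with $h^{\ast}(1)$ in the correct class mod $5$. For this the key input is the ramification fact recorded at the start of Section~\ref{secProof11}, namely $(5)=(\omega_1-1)^4$ in $\mathbb Z[\omega_1]$: from $g(x)-F(x)\in 5\mathbb Z[x]$ we get $g(\omega_1)-F(\omega_1)\in(\omega_1-1)^4\mathbb Z[\omega_1]$, hence
\[
 g(\omega_1)=1+(\omega_1-1)^3\bigl(h(\omega_1)+(\omega_1-1)\gamma\bigr)
\]
for some $\gamma\in\mathbb Z[\omega_1]$. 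Choosing any $h^{\ast}\in\mathbb Z[x]$ representing $h(\omega_1)+(\omega_1-1)\gamma$ and taking the norm over $\mathbb Q(\omega_1)$ gives $m=N_1\bigl(1+(x-1)^3h^{\ast}(x)\bigr)$, while reducing $h^{\ast}(\omega_1)=h(\omega_1)+(\omega_1-1)\gamma$ modulo the prime $(\omega_1-1)$ (which lies over $5$) shows $h^{\ast}(1)\equiv h(1)\pmod 5$. Thus the new representation is Type~1 precisely when the original one was.

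The only point needing a little care — and the nearest thing to an obstacle — is that $h^{\ast}$ is not canonically a polynomial: two polynomial representatives of the same element of $\mathbb Z[\omega_1]$ differ by a $\mathbb Z[x]$-multiple of $\Phi_5(x)$, and hence their values at $x=1$ differ by a multiple of $\Phi_5(1)=5$. So $h^{\ast}(1)\bmod 5$ is well defined and the notion of ``type'' used above is unambiguous; everything else is a routine expansion. (Alternatively, one could first normalise $g$ via Lemma~\ref{lemNormRep}\ref{lemNormRep1} and invoke the uniqueness statement~\ref{lemNormRep2} to fix the type, but the explicit computation above already settles it.)
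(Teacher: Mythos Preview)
Your proof is correct and takes essentially the same approach as the paper: both apply Lemma~\ref{reduction} to get $g\equiv F\pmod 5$ with $N_1(g)=m$, then absorb the difference $5h_1$ into a term divisible by $(\omega_1-1)^4$ to recover a $5$-norm representation whose type matches. The paper is slightly more explicit, writing the representation concretely as $1+(x-1)^3h(x)+(x-1)^4(x^3-x-1)h_1(x)$ so that the new $h^\ast(1)=h(1)$ on the nose, but this is only a cosmetic difference from your argument.
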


\begin{proof} Suppose that $m=N_2(F)$ with $F(x)=1+(x-1)^3h(x)$.
Then by Lemma~\ref{reduction} we have
$m=N_1(g)$ with $g(x)=F(x)+5h_1(x)$ and $m=N_1(1+(x-1)^3h(x)+ (x-1)^4(x^3-x-1)h_1(x)).$  This gives a 5-norm representation which is Type~1 if $5\nmid h(1)$ and Type~2 if $5\mid h(1)$.
\end{proof}

We observe that we can achieve $5^3m$ when $m$ has a $5$-norm Type~1 representation, for example, the Type~1 primes $q\equiv 1 \bmod 5$.

\begin{lemma} \label{repping}
Let $m\in\mathbb{Z}$.
\begin{enumerate}[label=(\roman*)]
\item\label{repping1}
If $m$ has a $5$-norm Type~1 representation then there is an $F(x)$ in $\mathbb Z[x]$ with 
$M_{25}(F)=5^3m$ and $F(1)=N_2(F)=5$, $N_1(F)=5m.$
\item\label{repping2}
If $m$ has a $25$-norm Type~1 representation then there is additionally an $F(x)$ in $\mathbb Z[x]$ with 
$M_{25}(F)=5^3m$ and  $F(1)=N_1(F)=5$, $N_2(F)=5m.$
\end{enumerate}
\end{lemma}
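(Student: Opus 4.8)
The plan is to write $F$ down explicitly by an interpolation-type ansatz built around the three evaluations $\Phi_{25}(\omega_2)=0$, $\Phi_{25}(\omega_1)=5$ and $\Phi_{25}(1)=5$. For part~\ref{repping1} I would take
$$F(x)=(1-x)+\Phi_{25}(x)G(x)$$
for a polynomial $G\in\mathbb Z[x]$ to be chosen. Then $F(\omega_2)=1-\omega_2$, so $N_2(F)=\Phi_{25}(1)=5$ for free; $F(1)=5G(1)$, so we must arrange $G(1)=1$; and $F(\omega_1)=(1-\omega_1)+5G(\omega_1)$, so $N_1(F)$ depends only on $G\bmod\Phi_5$. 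Given the Type~1 representation $m=N_1\!\big(1+(x-1)^3h(x)\big)$, the identity $(x-1)^4=5(x^2+1)^2-4\Phi_5(x)$ (as in the proof of Lemma~\ref{primenorm}) gives $(1-\omega_1)\big(1+(\omega_1-1)^3h(\omega_1)\big)=(1-\omega_1)-5(\omega_1^2+1)^2h(\omega_1)$. Hence choosing $G(x)=-(x^2+1)^2h(x)+\Phi_5(x)r(x)$ forces $F(\omega_1^j)=(1-\omega_1^j)\big(1+(\omega_1^j-1)^3h(\omega_1^j)\big)$ for every $j$, so $N_1(F)=N_1(1-x)\cdot N_1\!\big(1+(x-1)^3h(x)\big)=5m$ and $M_{25}(F)=5\cdot5m\cdot5=5^3m$ — provided $G(1)=1$ can be met.

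Pinning down $G(1)$ is the only real difficulty. With $G$ as above one has $G(1)=-4h(1)+5r(1)$, so $G(1)=1$ is solvable over $\mathbb Z$ precisely when $h(1)\equiv1\bmod5$, whereas the Type~1 hypothesis only gives $5\nmid h(1)$. I would remove this gap by the observation that $f(x)=1+(x-1)^3h(x)$ may be replaced by $f(x^a)$ for any $a\in\{1,2,3,4\}$: since $(x^a-1)^3=(x-1)^3(1+x+\cdots+x^{a-1})^3$, the polynomial $f(x^a)$ is again of the form $1+(x-1)^3\tilde h(x)$ with $\tilde h(1)=a^3h(1)$, while $N_1\!\big(f(x^a)\big)=N_1(f)=m$ because $j\mapsto aj$ permutes the reduced residues modulo $5$. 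As $a\mapsto a^3$ is a bijection of $(\mathbb Z/5)^\times$, we can pick $a$ with $a^3h(1)\equiv1\bmod5$, and then the construction closes up. This normalization of the leading coefficient modulo $5$ is exactly where the Type~1 condition is spent, and is the main obstacle.

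Part~\ref{repping2} is the mirror image. I would take $F(x)=(1-x)P(x)+\Phi_{25}(x)Q(x)$ with $P(x)=1+(x-1)^3h(x)$ the given $25$-norm Type~1 representation $m=N_2(P)$, after the same $x\mapsto x^a$ substitution (again $N_2$ is unchanged since $k\mapsto ak$ permutes $(\mathbb Z/25)^\times$). Then $F(\omega_2^k)=(1-\omega_2^k)P(\omega_2^k)$, so $N_2(F)=N_2(1-x)\,N_2(P)=5m$ for free; $F(1)=5Q(1)$, forcing $Q(1)=1$; and $F(\omega_1)=(1-\omega_1)+5\big(Q(\omega_1)-(\omega_1^2+1)^2h(\omega_1)\big)$ by the same identity $(1-\omega_1)^4=5(\omega_1^2+1)^2$. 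Choosing $Q(x)=(x^2+1)^2h(x)+\Phi_5(x)r(x)$ makes $F(\omega_1^j)=1-\omega_1^j$ for all $j$, so $N_1(F)=\Phi_5(1)=5$; and then $Q(1)=4h(1)+5r(1)$, so $Q(1)=1$ is solvable exactly when $h(1)\equiv4\bmod5$, which the $x\mapsto x^a$ trick again supplies (choose $a$ with $a^3h(1)\equiv4\bmod5$). This yields $M_{25}(F)=5\cdot5\cdot5m=5^3m$ with $F(1)=N_1(F)=5$ and $N_2(F)=5m$, as required. In both parts the substantive point is keeping $F(1)$ equal to $5$ exactly — not merely a multiple of $5$ — while steering the two norms to their target values; everything else is a routine evaluation.
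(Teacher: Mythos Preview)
Your proof is correct. The interpolation ansatz $F(x)=(1-x)+\Phi_{25}(x)G(x)$ (respectively $F(x)=(1-x)P(x)+\Phi_{25}(x)Q(x)$) together with the identity $(\omega_1-1)^4=5(\omega_1^2+1)^2$ is exactly the skeleton the paper uses as well, and your verification of the three evaluation conditions is accurate.

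The genuine difference is in how you force $F(1)=5$ exactly. The paper does not renormalise the representation; instead it first builds an $f(x)$ with the correct $N_1$ and $N_2$ but with $f(1)$ only a unit multiple of $5$, and then passes to
\[
F(x)=\frac{x^{\ell}-1}{x-1}\,f(x)-\lambda\,\frac{x^{25}-1}{x-1},
\]
choosing $\ell$ coprime to $5$ and $\lambda$ so that $F(1)=5$; the geometric-series factor is a unit at every nontrivial $25$th root of unity and the second term vanishes there, so $N_1,N_2$ are untouched. You instead precompose with the Galois substitution $x\mapsto x^{a}$, which preserves both norms and replaces $h(1)$ by $a^{3}h(1)$, and then pick $a\in(\mathbb Z/5)^{\times}$ so that the congruence on $h(1)$ needed for $G(1)=1$ (respectively $Q(1)=1$) holds on the nose. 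Your device is a bit cleaner here since it avoids the auxiliary multiplier and correction term; the paper's device is more portable, being the same ``$\frac{x^{\ell}-1}{x-1}$ times $f$ minus $\lambda\frac{x^{n}-1}{x-1}$'' trick reused verbatim in the $\mathbb Z_{27}$ constructions. Either way the Type~1 hypothesis $5\nmid h(1)$ is spent at exactly the same point: making the $F(1)=5$ constraint solvable.
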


\begin{proof}
\ref{repping1} Suppose that $5m$ has a form \eqref{5form} with $5\nmid g(1)$. So there exists a positive integer
$\ell$ with $5\nmid \ell$ and $\ell g(1)=1+5\lambda$ for some integer $\lambda$. Take 
\be   \label{Fwithf}    F(x)=(1+x+\cdots +x^{\ell-1})f(x) -\lambda \left(\frac{x^{25}-1}{x-1}\right),    \ee
with $f(x)= (1-x) + \Phi_5(x^5) g(x).$ Then $F(1)=5\ell g(1)-25\lambda=5$, $N_1(F)=N_1(f)=5m$, $N_{2}(F)=N_2(f)=N_{2}(1-x)=5$ and $M_{25}(F)=5^3m.$

\vspace{1ex}
\noindent
\ref{repping2} If $m=N_2\left(1+(x-1)^3h(x)\right),$ $5\nmid h(1),$ then
$$ (\omega_1-1)\left(1+(\omega_1-1)^3h(\omega_1)\right)=(\omega_1 -1)+ 5h(\omega_1)(\omega_1^2+1)^2 $$
and taking integers  $\ell>0$ and $\lambda$ with $-4h(1)\ell=1+5\lambda$ we set \eqref{Fwithf} with
$$ f(x)=(x-1)\left(1+(x-1)^3h(x)\right)- \Phi_5(x^5) h(x)(x^2+1)^2.  $$
So $F(1)=-  20h(1)\ell-25\lambda =5,$ $N_1(F)=N_1(f)=N_1(x-1)=5,$ $N_2(F)=N_2(f)=5m$ and $M_{25}(F)=5^3m.$
\end{proof}

For a prime $q$ we use $q^f$ to denote the minimal positive power of $q$ with $q^f\equiv1 \bmod 5^i$.
Composite $m$ can have both Type~1 and Type~2 representations but this is not true for the $q^f$.
Thus it makes sense to call $q^f$ a $5^i$-norm Type~1 or Type~2 integer depending on whether it has a  $5^i$-norm Type~1 or~2 representation \eqref{5form1}.

\begin{lemma}\label{Type1or2}
A minimal prime power $q^f\equiv 1 \bmod 5^i$ cannot be both $5^i$-norm Type~1 and Type~2.
If $m$ is a product of  $q^{f}$ all  of $5^i$-norm Type~2 or $m=1$ then $m$ can only be  $5^i$-norm Type~2.
A product of one Type~1 and the rest Type~2 must be Type~1. If at least two  are Type~1 then $m$ will have both Type~1 and Type~2 representations. 
\end{lemma}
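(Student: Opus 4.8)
The plan is to analyze the multiplicative behavior of Type~1 vs.\ Type~2 representations directly from the defining congruence \eqref{5form1}. First I would set up notation: if $m_1 = N_i(1+(x-1)^3 h_1(x))$ and $m_2 = N_i(1+(x-1)^3 h_2(x))$, then since $N_i$ is multiplicative (it is a norm from $\mathbb Z[\omega_i]$ to $\mathbb Z$), their product satisfies $m_1 m_2 = N_i\big((1+(x-1)^3 h_1(x))(1+(x-1)^3 h_2(x))\big)$, and expanding the product inside gives $1 + (x-1)^3\big(h_1(x) + h_2(x) + (x-1)^3 h_1(x)h_2(x)\big)$, so the new ``$h$'' evaluated at $1$ is $h_1(1) + h_2(1) \bmod 5$. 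Thus, \emph{given fixed representations}, the type of the product is governed additively by $h_i(1) \bmod 5$: Type~1 means $h(1)\not\equiv 0$, Type~2 means $h(1)\equiv 0$. The statement that a product of Type~2's is Type~2 and a product of one Type~1 with the rest Type~2 is Type~1 then follows immediately from this additive law, \emph{provided} one knows the type of $q^f$ is well-defined (independent of representation), which for a single minimal prime power $q^f$ is exactly the first assertion of the lemma.

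For the well-definedness of the type of a minimal prime power $q^f$, I would invoke Lemma~\ref{lemNormRep}\ref{lemNormRep2}, which says $f(\omega_i)$ cannot have both a representation with $5\mid h(1)$ and one with $5\nmid h(1)$. But that lemma is about a \emph{fixed ring element} $f(\omega_i)$, whereas here we have an integer $q^f$ which could in principle be $N_i(f)$ for genuinely different ring elements $f(\omega_i)$. The point is that the prime ideal factorization of $(q)$ in $\mathbb Z[\omega_i]$ is rigid: since $q$ is a rational prime with order exactly $f = f_{i}$ mod $p^i$, the ideal $(q)$ splits into $\phi(p^i)/f$ prime ideals each of norm $q^f$, so any $f(x)$ with $N_i(f) = q^f$ and $5\nmid q^f$ must have $(f(\omega_i))$ equal to one of these prime ideals $\mathfrak q$. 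Different choices of $\mathfrak q$ (among the primes above $q$) are Galois-conjugate, and different generators of the same $\mathfrak q$ differ by a unit — in both cases the value $h(1)\bmod 5$ is unchanged: conjugation by $\omega_i \mapsto \omega_i^c$ fixes $x=1$, and multiplying by a unit $u(\omega_i)$ is handled exactly as in the proof of Lemma~\ref{lemNormRep}\ref{lemNormRep2} via Newman's Lemma~\ref{Newman} (a unit is $1 + (\omega_i-1)u_1(\omega_i)$ form, and the $\pi_i^3 \parallel$ argument forces consistency). So I would phrase this as: the type of $q^f$ is an invariant of the prime $q$ (for fixed $i$), proved by reducing to Lemma~\ref{lemNormRep}\ref{lemNormRep2} after noting all valid $f(\omega_i)$ are associates of Galois conjugates.

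Finally, for the last assertion — if at least two of the $q^f$ dividing $m$ are Type~1, then $m$ has \emph{both} a Type~1 and a Type~2 representation — I would argue that within the subgroup generated by the prime-ideal contributions, we have freedom to pick different generators/orderings only for composites, but the real flexibility comes from the fact that $m$ itself is not a single prime power, so the factorization $m = N_i(F)$ with $F$ built as a product of pieces is not unique. Concretely, if $q_1^{f}$ and $q_2^{f}$ are both Type~1 with representations $1 + (x-1)^3 h_j(x)$, $h_j(1) \equiv a_j \not\equiv 0$, then one representation of $q_1^f q_2^f$ has $h(1) \equiv a_1 + a_2$; but we may also replace the representation of $q_1^f$ by a Galois-conjugate that, \emph{for a composite}, lands on a different prime above $q_1$ — wait, that does not change $h(1)$. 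The correct mechanism is simpler: multiply the representation of one factor by $\Phi_{5^i}(x)$ times a suitable polynomial, i.e.\ add a multiple of the norm-zero ideal generator, which changes $h(x)$ by a term in $(x-1)^{\phi(5^i)-3}$ but \emph{not} its value at... Actually, the cleanest route: since $h_1(1)\not\equiv 0$, choose $t \in\{1,\dots,4\}$ so that $t\,a_1 \equiv -a_2$; the integer $q_1^{ft} q_2^f$ would be Type~2, but I want $q_1^f q_2^f$ itself. The genuine extra freedom is that $q_1^f$, being Type~1, can be written as $N_i(g)$ where $g(\omega_i)$ ranges over all unit multiples of a fixed prime generator, and for a \emph{composite} target we can absorb a Type-changing unit of norm $1$: pick a unit $u(\omega_i)$ with $u(\omega_i) = 1 + (\omega_i-1)^3(\text{unit})$ so that conjugating one prime factor by it flips the running sum past a multiple of $5$. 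I expect \textbf{this last assertion to be the main obstacle}: making precise ``at least two Type~1 factors $\Rightarrow$ both types achievable'' requires exhibiting an explicit norm-$1$ adjustment (a cyclotomic unit, e.g.\ a power of $(\omega_i^4+\omega_i)$ as used in Lemma~\ref{lemNormRep}, or $(1-\omega_i^2)/(1-\omega_i)$) whose effect on $h(1) \bmod 5$ is a nonzero shift, and verifying that with two independent Type~1 factors one has enough room to reach both residue classes $0$ and nonzero; I would handle this by a direct computation with the standard generators of the cyclotomic units of $\mathbb Z[\omega_i]$, of which the explicit list in Lemma~\ref{lemNormRep}\ref{lemNormRep1} already gives the relevant ones.
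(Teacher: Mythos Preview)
Your approach matches the paper's for the first three assertions: the additive law $h(1)\equiv h_1(1)+h_2(1)\bmod 5$ for products, together with the reduction of ``type of $q^f$'' to Lemma~\ref{lemNormRep}\ref{lemNormRep2} via the fact that any two elements of norm $q^f$ are associate Galois conjugates, is exactly what the paper does.

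The genuine gap is in the last assertion, and it stems from a miscalculation you made earlier and then carried forward. You write that ``conjugation by $\omega_i\mapsto\omega_i^c$ fixes $x=1$'' and later ``that does not change $h(1)$''. This is false at the level you need. Applying $\sigma_j$ to $1+(\omega_i-1)^3h(\omega_i)$ gives $1+(\omega_i^j-1)^3h(\omega_i^j)$, and rewriting $(\omega_i^j-1)^3=(\omega_i-1)^3(1+\omega_i+\cdots+\omega_i^{j-1})^3$ shows the new $h$-value at $1$ is $j^3\,h(1)$, not $h(1)$. Since $5\nmid j$ this is a nonzero multiple, so the \emph{type} is preserved (which is all you needed for the first assertion), but the actual residue does move. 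This is precisely the mechanism the paper uses for the last assertion: with two Type~1 factors contributing $A,B\not\equiv 0\bmod 5$, replace one factor by its $j$-th Galois conjugate so the product has $h(1)\equiv j^3A+B$; as $j$ runs over $(\mathbb Z/5\mathbb Z)^\times$ this hits both zero and nonzero residues, giving both Type~1 and Type~2 representations of the same $m$.

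By contrast, your proposed fix via cyclotomic units cannot work. Multiplying $f(\omega_i)$ by a unit $u(\omega_i)$ gives another element whose reduced form $u'(\omega_i)\cdot u(\omega_i)f(\omega_i)=1+(\omega_i-1)^3h'(\omega_i)$ is itself a representation of $f(\omega_i)$ with the unit $u'u$; Lemma~\ref{lemNormRep}\ref{lemNormRep2} then forces $h'(1)\equiv h(1)$ in the zero/nonzero sense, so the contribution to the product's type is unchanged. In short, units preserve type (that was the whole point of Lemma~\ref{lemNormRep}\ref{lemNormRep2}), while Galois conjugation rescales $h(1)$ by a nonzero factor --- you had the roles reversed.
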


\begin{proof}
Suppose that $g(x)$ in $\mathbb Z[x]$ has $N_i(g)=q^f$ and that
\[
u_1(\omega_i)g(\omega_i)=1+(\omega_i-1)^3h_1(\omega_i)
\]
for some unit $u_1(\omega_i)$.
Then for any $5\nmid j$ we have 
$$ u_1(\omega_i^j) g(\omega_i^j)=1+(\omega_i-1)^3h_2(\omega_i),\;\;\; h_2(x)=(1+x+\cdots + x^{j-1})^3h_1(x^j).$$ 
So if we have a representation in \eqref{5form1} with $5\mid h_1(1)$ or $5\nmid h_1(1)$ then all the conjugates 
will also have a representation of that same type.
Hence $q^f$ cannot have  both a Type~1 representation $g_1(\omega_i)$ and a Type~2 representation $g_2(\omega_i);$  up to a unit $g_1(\omega_i)$ is a conjugate of $g_2(\omega_i)$, and Lemma \ref{lemNormRep}\ref{lemNormRep2} rules out $g_1(\omega_i)$ having both types of representation.

Observe that if $f(\omega_i)$ has a Type~2 representation and  $g(\omega_i)$  has a Type~1 or~2, then $f(\omega_i)g(\omega_i)$ has a representation of the same type as $g(\omega_i)$:
\begin{align*}  u_1(\omega_i) f(\omega_i) & = 1+(\omega_i-1)^3(A+ (1-\omega_i)h_1(\omega_i)), \;\;5\mid A,   \\
u_2(\omega_i) g(\omega_i)& =1+ (\omega_i-1)^3(B+ (1-\omega_i)h_2(\omega_i)),
\end{align*}
then 
$$ u_1(\omega_i)u_2(\omega_i)f(\omega_i)g(\omega_i) =  1+\left(\omega_i-1)^3(A+B+ (1-\omega_i)h_3(\omega_i))\right)$$
and $5\mid A+B$ if and only if $5\mid B$. If $5\nmid AB$ then we can choose $j$ with $5\nmid j$ so that $5\nmid jA+B$ or so that $5\mid jA+B$ and $f(\omega_i^j)g(\omega_i)$ is  respectively  Type~1 or Type~2.

Now if $m$ is a product of Type~2 prime powers then the constituent primes will all have Type~2 representations and their product a Type~2 representation.
A unit $u(\omega_i)$ plainly has a Type~2 representation $u(\omega_i)^{-1}u(\omega_i)=1.$
As shown in Lemma~\ref{lemNormRep}\ref{lemNormRep2}, an element in $\mathbb Z[\omega_i]$ cannot have both types of representation. The other cases are similar.
\end{proof}

We show that the only minimal $q^f$ of 5-norm Type~1 are the Type~1 primes $q\equiv 1 \bmod 5$
and the only $25$-norm Type~1 are the Type~1 primes $q\equiv 1 \bmod 25$.

\begin{lemma}\label{primepowers}
Let $q$ be a prime.
\begin{enumerate}[label=(\roman*)]
\item\label{primepowers1} If $q\equiv \pm 2 \bmod 5$ then $q^4$ is $5$-norm Type~2.
\item\label{primepowers2} If $q\equiv -1 \bmod 5$ then $q^2$ is $5$-norm  Type~2.
\item\label{primepowers3} The only $25$-norm Type~1 minimal $q^f$ are the Type~1 primes $q\equiv 1 \bmod 25$.
\end{enumerate}
\end{lemma}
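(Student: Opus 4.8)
The plan is to run all three parts through Newman's unit criterion (Lemma~\ref{Newman}), exactly in the spirit of Lemma~\ref{lemNormRep}\ref{lemNormRep2}. For parts \ref{primepowers1} and \ref{primepowers2} I would first isolate one clean statement and prove it once: \emph{if $\alpha\in\mathbb Z[\omega_1]$ is fixed by complex conjugation and $5\nmid N_1(\alpha)$, then $N_1(\alpha)$ has a $5$-norm Type~2 representation.} Indeed, if $\alpha$ had a Type~1 representation then by Lemma~\ref{lemNormRep}\ref{lemNormRep1} there is a unit $u(\omega_1)$ with $u(\omega_1)\alpha=1+(\omega_1-1)^3h(\omega_1)$ and $5\nmid h(1)$. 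Conjugating, using $\overline\alpha=\alpha$ and $\overline{\omega_1}-1=-\omega_1^{-1}(\omega_1-1)$, and subtracting, one gets
$$ (u(\omega_1)-\overline{u(\omega_1)})\,\alpha=(\omega_1-1)^3\bigl(h(\omega_1)+\omega_1^{-3}\overline{h(\omega_1)}\bigr), $$
whose bracketed factor reduces to $2h(1)\not\equiv0\bmod 5$ at $\omega_1=1$; since $(1-\omega_1)\nmid\alpha$ this forces $\pi_1^{3}\parallel (u(\omega_1)-\overline{u(\omega_1)})$ with both terms units, contradicting Lemma~\ref{Newman} (for $t=1$ the only admissible odd exponent is $1$).

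Part \ref{primepowers1} then follows because $q\equiv\pm2\bmod5$ has order $4$ in $(\mathbb Z/5)^{*}$, so $q$ is inert in $\mathbb Z[\omega_1]$ and $q^4=N_1(q)$ with $q$ the real constant. Part \ref{primepowers2} follows because $q\equiv-1\bmod5$ makes $5$ a quadratic residue mod $q$, so $q$ splits in the (class-number-one) real subfield $\mathbb Z[\omega_1+\omega_1^{-1}]$, and a generator $\rho$ of one factor is real with $N_1(\rho)=q^2$; in both cases the relevant power is the minimal prime power that is $1\bmod5$, so Lemma~\ref{Type1or2} upgrades ``has a Type~2 representation'' to ``is Type~2''.

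For part \ref{primepowers3}, let $q^f\equiv1\bmod25$ be minimal and suppose it is $25$-norm Type~1; by Lemma~\ref{25normsare5norms} it is then $5$-norm Type~1. If $q\not\equiv1\bmod5$, then $q^f$ is a power of the minimal prime power $\equiv1\bmod5$, which is $5$-norm Type~2 by \ref{primepowers1}--\ref{primepowers2}, so Lemma~\ref{Type1or2} makes $q^f$ $5$-norm Type~2, a contradiction. Hence $q\equiv1\bmod5$, so $f\in\{1,5\}$, and it remains to exclude $f=5$. When $\mathrm{ord}_{25}(q)=5$ the primes above $q$ in $\mathbb Z[\omega_1]$ have norm $q$ and are inert in $\mathbb Z[\omega_2]$; fix one with generator $\gamma\in\mathbb Z[\omega_1]$, so $N_2(\gamma)=q^5$. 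Any $F$ realizing $q^5=N_2(F)$ has $F(\omega_2)$ generating an ideal of norm $q^5$, necessarily a prime above $q$, hence a unit times a Galois conjugate of $\gamma$; so it suffices to show $\gamma$ is $25$-norm Type~2. Suppose not: $w(\omega_2)\gamma=1+(\omega_2-1)^3H(\omega_2)$ with $w$ a unit of $\mathbb Z[\omega_2]$ and $5\nmid H(1)$. Let $\sigma$ generate $\mathrm{Gal}(\mathbb Q(\omega_2)/\mathbb Q(\omega_1))\cong\mathbb Z/5$; since $\sigma$ fixes $\gamma$, applying $\sigma$ and subtracting gives $(\sigma(w)-w)\gamma=\sigma(\omega_2-1)^{3}H(\sigma\omega_2)-(\omega_2-1)^{3}H(\omega_2)$. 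Writing $\sigma(\omega_2)=\zeta\omega_2$ with $\zeta\ne1$ a primitive $5$th root, we have $\sigma(\omega_2-1)=\eta\,(\omega_2-1)$ with $\eta=(\zeta\omega_2-1)/(\omega_2-1)$ a unit; from $\pi_2^{5}\parallel(\zeta-1)$ (as $\zeta-1$ is an associate of $\pi_1$ and $(\pi_1)=(\pi_2)^{5}$) one gets $\pi_2^{4}\parallel(\eta-1)$, hence $\pi_2^{7}\parallel\bigl(\sigma(\omega_2-1)^3-(\omega_2-1)^3\bigr)$, while $(\omega_2-1)^3\bigl(H(\sigma\omega_2)-H(\omega_2)\bigr)$ is divisible by $\pi_2^{8}$. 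Therefore $\pi_2^{7}\parallel(\sigma(w)-w)$ with $\sigma(w),w$ units and $7$ odd — but for $t=2$ Lemma~\ref{Newman} allows only $1$ and $5$. This contradiction forces $f=1$, i.e.\ $q\equiv1\bmod25$, and such a prime, being $25$-norm Type~1, is $5$-norm Type~1 by Lemma~\ref{25normsare5norms}, a Type~1 prime.

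The main obstacle is the valuation bookkeeping in the last step: everything hinges on knowing that $\sigma(\omega_2-1)^3-(\omega_2-1)^3$ is divisible by \emph{exactly} $\pi_2^{7}$ — neither $\pi_2^{5}$ nor a higher odd power — and that the complementary $H$-term is divisible by a strictly higher power, so that the odd exponent $7$ is isolated and Newman's criterion applies. The remaining ingredients (identifying $\gamma$ with a prime generator, the splitting and inertia behaviour of $q$ in $\mathbb Z[\omega_1]\subset\mathbb Z[\omega_2]$, the invariance of type under units and Galois conjugation already used in Lemmas~\ref{lemNormRep} and \ref{Type1or2}, and the positivity of norms in these totally complex fields) are routine.
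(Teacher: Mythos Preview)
Your argument is correct. For parts \ref{primepowers1} and \ref{primepowers2} you prove a single clean statement (any real $\alpha$ with $5\nmid N_1(\alpha)$ forces Type~2 via $(\omega_1-1)^3\parallel (u-\overline u)$ and Newman), whereas the paper only uses this device for \ref{primepowers2} and handles \ref{primepowers1} by an explicit unit calculation, namely $(3+5k)(5\omega_1^3+5\omega_1^2-3)=1-(\omega_1-1)^4(\cdots)$; your unified treatment is tidier. For \ref{primepowers3} the reduction to $q\equiv1\bmod5$ is the same, but the paper disposes of the $f=5$ case by a direct substitution: from $q=N_1(1+(x-1)^3h(x))$ one gets $q^5=N_2(1+(x^5-1)^3h(x^5))$, and since $(x^5-1)^3=(x-1)^{15}(\cdots)$ and $5=(x-1)^{20}(\cdots)$ in $\mathbb Z[\omega_2]$, this is visibly of shape $1+(x-1)^{15}(\cdots)$, hence Type~2. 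Your route --- fixing a generator $\gamma\in\mathbb Z[\omega_1]$ of a prime of norm $q^5$, applying a nontrivial $\sigma\in\mathrm{Gal}(\mathbb Q(\omega_2)/\mathbb Q(\omega_1))$, and isolating $\pi_2^{7}\parallel(\sigma(w)-w)$ to contradict Newman --- is valid (your valuation bookkeeping $v_{\pi_2}(\zeta-1)=5$, $v_{\pi_2}(\eta-1)=4$, $v_{\pi_2}(\eta^3-1)=4$, and $v_{\pi_2}(H(\sigma\omega_2)-H(\omega_2))\ge5$ all check), and has the virtue of running the whole lemma through Newman's criterion; the paper's computation is shorter and avoids the delicate $\pi_2$-accounting altogether.
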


\begin{proof}
\ref{primepowers1} Observe that $5\omega_1^3+5\omega_1^2-3$ is a unit in $\mathbb Z[\omega_1].$
Hence $3+5k$ has 
$$ (3+5k)(5\omega_1^3+5\omega_1^2-3)=1-(\omega_1-1)^4\left(5\omega_1^3-8\omega_1-8+k(8\omega_1^3-13\omega_1-13)\right), $$
 and a Type~2 representation, giving the result for $q^4$ when $\pm q\equiv 3 \bmod 5$.

\vspace{1ex}
\noindent
\ref{primepowers2} If $q\equiv -1 \bmod 5$, then $\left(\frac{5}{q}\right)=1$ and
$q$ splits in the real subfield $\mathbb Q[\sqrt{5}]$, and $q^2=N_1(f(\sqrt{5}))$ for a linear function $f(x)\in\mathbb{Q}[x]$.
Hence for some unit $u(\omega_1)$ we have
$$ u(\omega_1)f(\sqrt{5})= 1+(\omega_1-1)^3 h(\omega_1),\;\;\; \overline{u(\omega_1)}f(\sqrt{5})= 1-(\omega_1-1)^3 \omega_1^2\:\overline{h(\omega_1)}. $$
Hence $u(\omega_1)-\overline{u(\omega_1)}=(\omega_1-1)^3k(\omega_1)$ with $k(\omega_1)=u(\omega_1)\omega_1^2h(\omega_1^{-1})+u(\omega_1^{-1})h(\omega_1)\equiv 2u(1)h(1) \bmod (\omega_1-1)$ and $5\nmid h(1)$ would give $(\omega_1-1)^3\parallel (u(\omega_1)-\overline{u(\omega_1)})$, contradicting Lemma \ref{Newman}. 

\vspace{1ex}
\noindent
\ref{primepowers3} Suppose that $q^f$  is $25$-norm Type~1.
By Lemma~\ref{25normsare5norms} we know that $q^f,$ and hence the minimal power $q^{f'}\equiv 1 \bmod 5$, must be $5$-norm Type~1.
By \ref{primepowers1} and \ref{primepowers2} this says that  $q\equiv 1 \bmod 5$ is a Type~1 prime.
If $q\equiv 1 \bmod 25$ then $f=f'=1$ and  $q$ will have the same $25$-norm and $5$-norm type. If $q\equiv 1 \bmod 5$ but $q\not\equiv 1 \bmod 25$, then $f=5$, and taking a representation $q=N_1(1+(x-1)^3h(x))$ of either type gives a $25$-norm Type~2 representation for $q^5$:
\begin{align*}
q^5 & =N_2(1+(x^5-1)^3h_1(x^5))\\
&=N_2(1+(x-1)^{15}h_1(x^5)+5h_2(x))\\
& =N_2(1+(x-1)^{15}h(x^5)+(x-1)^{20}h_3(x)). \qedhere
\end{align*}
\end{proof}

Tanner \cite{Tanner1,Tanner2} defined a prime $q\equiv 1 \bmod 5$ to be an \textit{artiad} if the coefficients in the Jacobi function
$$ R(\omega_1)=\sum_{s=1}^{q-2} \chi(s)\chi(s+1),\;\; \chi(s)=\omega_1^{\ind(s)}, $$
the index $\ind(s)$ taken relative to a primitive root mod $q$, normalized so that 
\be \label{Rexp} R(\omega_1)=\sum_{i=0}^4 q_i \omega_1^i,\;\; \;\;\sum_{i=0}^4 q_i=-1, \ee
satisfy
$$ q_1\equiv q_2\equiv q_3\equiv q_4 \bmod 5.$$
Otherwise he defined it to be a \textit{perissad}.
We show that  this  coincides with  our partition \eqref{eqn5Types12}.

\begin{lemma}\label{artiad}
A prime $q\equiv 1\bmod 5$ is an artiad if and only if $q \in \mathcal{T}^{(5)}_2.$
\end{lemma}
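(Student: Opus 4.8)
The plan is to connect Tanner's Jacobi-function coefficients to a $5$-norm representation of $q$, and then invoke the uniqueness result of Lemma~\ref{lemNormRep}\ref{lemNormRep2} together with the type-stability under conjugation from Lemma~\ref{Type1or2}. The starting point is the classical fact that the Jacobi sum $R(\omega_1)=\sum_{s=1}^{q-2}\chi(s)\chi(s+1)$ has absolute value $\sqrt q$ in each embedding, so that $(R(\omega_1))$ is one of the prime ideals above $q$ in $\mathbb Z[\omega_1]$; since $q\equiv 1\bmod 5$ splits completely and $\mathbb Z[\omega_1]$ is a PID, $R(\omega_1)$ is, up to a unit, a prime element of norm $q$, i.e. $q=N_1(R(\omega_1))$ (after possibly multiplying by a unit to clear the constant-factor ambiguity in the normalization \eqref{Rexp}). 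So $R(\omega_1)$, or a unit multiple of it, is a $5$-norm representation of the prime $q$, and by Lemma~\ref{Type1or2} the type of this representation is exactly the type of $q$ in the partition \eqref{eqn5Types12}.

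Next I would compute the type of the representation $R(\omega_1)$ directly from Tanner's congruence condition. Write $R(\omega_1)=q_0+q_1\omega_1+q_2\omega_1^2+q_3\omega_1^3+q_4\omega_1^4$ with $\sum q_i=-1$. Expanding in powers of $(\omega_1-1)$: the constant term is $\sum q_i=-1$, so $-R(\omega_1)=1+\sum_{j\ge1}c_j(\omega_1-1)^j$ where $c_1=-(q_1+2q_2+3q_3+4q_4)$ and similarly for higher $c_j$, each a fixed integer-linear combination of the $q_i$. To reach the normal form $u(\omega_1)(-R(\omega_1))=1+(\omega_1-1)^3h(\omega_1)$ of Lemma~\ref{lemNormRep}\ref{lemNormRep1} we apply the unit adjustments by $\omega_1^I$ and $(\omega_1^4+\omega_1)^{2K}$ exactly as in the proof of Lemma~\ref{lemNormRep}; I would track the residue mod $5$ of the resulting coefficient of $(\omega_1-1)^3$. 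The key arithmetic identity to pin down is that this $h(1)\bmod 5$ is (up to a nonzero scalar mod $5$) a fixed linear form in $q_1,q_2,q_3,q_4$ that vanishes mod $5$ precisely when $q_1\equiv q_2\equiv q_3\equiv q_4\bmod 5$. One expects, using $\sum q_i\equiv -1$, that the relevant combination is something like $q_1-q_2-q_3+q_4$ or a symmetric analogue, and one checks it vanishes mod $5$ iff all four $q_i$ are congruent (given the constraint); the constraint $\sum q_i=-1$ is what makes "all four congruent mod $5$" a codimension-one condition cut out by a single such linear form. Then $5\mid h(1)$ (Type~2) matches Tanner's artiad condition and $5\nmid h(1)$ (Type~1) matches perissad.

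The main obstacle is the bookkeeping in the second step: correctly carrying the unit normalizations through and identifying which mod-$5$ linear form in the $q_i$ governs $h(1)\bmod 5$, and verifying that this form vanishes exactly on Tanner's locus. This is a finite computation but it is easy to make an index error, and one must be careful that the two unit factors $\omega_1^I$ and $(\omega_1^4+\omega_1)^{2K}$ only affect coefficients of $(\omega_1-1)^j$ for $j\ge 3$ and do not themselves perturb $h(1)\bmod 5$ (which is exactly why Lemma~\ref{lemNormRep}\ref{lemNormRep2} is needed: the value of $h(1)\bmod 5$ is an invariant of $R(\omega_1)$ once we have any representation in the form \eqref{genred}). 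A secondary point to handle cleanly is the unit ambiguity in $R(\omega_1)$ itself: different primitive roots mod $q$ give conjugate Jacobi sums $R(\omega_1^j)$, but by the conjugation argument in Lemma~\ref{Type1or2} these all have the same type, so the definition of artiad is independent of the choice of primitive root and agrees with membership in $\mathcal T^{(5)}_2$. I would also remark that, alternatively, one can quote E.~Lehmer's characterization and the identity $16q=x^2+50u^2+50v^2+125w^2$ with $5\mid w$ as a cross-check, but the self-contained route through Lemma~\ref{lemNormRep} is preferable.
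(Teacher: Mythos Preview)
Your overall strategy is right --- reduce the question to the type of $R(\omega_1)$ in the sense of Lemma~\ref{lemNormRep} --- but there is a genuine error in the first step. The Jacobi sum $R(\omega_1)$ has $|R(\omega_1)|=\sqrt q$ in each of the four complex embeddings of $\mathbb Q(\omega_1)$, so $N_1(R(\omega_1))=q^2$, not $q$; equivalently, the ideal $(R(\omega_1))$ is a product of \emph{two} of the four primes above $q$, not one. This breaks your appeal to Lemma~\ref{Type1or2}: for a Type~1 prime $q$ the integer $q^2$ admits both Type~1 and Type~2 representations, so knowing only that $N_1(R)=q^2$ does not let you read off the type of $q$ from the type of the element $R(\omega_1)$.

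The paper supplies exactly this missing link. From $R(\omega_1)R(\omega_1^{-1})=q$ one writes $R(\omega_1)=u_1(\omega_1)f(\omega_1)f(\omega_1^2)$ with $q=N_1(f)$, and then checks directly that
\[
\bigl(1+(\omega_1-1)^3h(\omega_1)\bigr)\bigl(1+(\omega_1^2-1)^3h(\omega_1^2)\bigr)=1+(\omega_1-1)^3\bigl(9h(1)+(\omega_1-1)h_1(\omega_1)\bigr),
\]
so the type of $R(\omega_1)$ coincides with that of $f(\omega_1)$, hence with that of $q$. For the second half, rather than tracking the unit normalizations of Lemma~\ref{lemNormRep} through your $c_j$, the paper expands $R(\omega_1)$ in powers of $(\omega_1^2-1)$ and invokes E.~Lehmer's relations among the $q_i$ (and Dickson's $u,v,w$) to produce a representation whose $(\omega_1-1)^3$--coefficient is governed by $q_3-q_1\bmod 5$; those same relations then show $q_1\equiv q_3\bmod 5$ forces $q_1\equiv q_2\equiv q_3\equiv q_4\bmod 5$. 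Your proposed direct bookkeeping could be made to work, but identifying the correct linear form and proving it vanishes iff all $q_i$ are congruent requires precisely this kind of external input; ``one expects \ldots\ something like $q_1-q_2-q_3+q_4$'' is not yet a proof.
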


\begin{proof}
From $R(\omega_1)R(\omega^{-1})=q$ we can write $R(\omega_1)=u_1(\omega_1)f(\omega_1)f(\omega_1^2)$ where $q=N_1(f).$
Now if $u_2(\omega_1)f(\omega_1)=1+(\omega_1-1)^3h(\omega_1)$ then 
\begin{align*} u_1(\omega_1)^{-1}u_2(\omega_1)u_2(\omega_1^2) R(\omega_1) & =\left(1+(\omega_1-1)^3h(\omega_1)\right)\left(1+(\omega_1^2-1)^3h(\omega_1^2)\right)\\
 &= 1+(\omega_1-1)^3\left(9h(1)+(\omega_1-1) h_1(\omega_1)\right)
\end{align*}
and $R(\omega_1)$ has a Type 2 representation if and only if $q$ is a Type 2 prime.

Rewriting \eqref{Rexp} and using \cite[eq.\ (16)]{ELehmer} we get
\begin{align*}
R(\omega_1) &= -1 + q_1(\omega_1-1) + q_2(\omega_1^2-1)+q_3(\omega_1^3-1) + q_4(\omega_1^4-1)\\
&= -1-5q_1+(\omega_1^2-1)(2q_4-2q_1+q_2-q_3)+(\omega_1^2-1)^2(q_3+q_4-2q_1)  \\
&\qquad -\omega_1^3(\omega_1^2-1)^3(q_3-q_1) \\
&= -1 -\omega_1^3(\omega_1^2-1)^3(q_3-q_1)-5q_1-5v(\omega_1^2-1)\\
&\qquad +5(u+2w+q_3-q_1)(\omega_1^2-1)^2
\end{align*}
with $u$, $v$ and $w$ as in \eqref{Dicksonform}, which gives a Type~2 representation if and only if $q_1\equiv q_3\bmod 5.$ 
Further if $q_1\equiv q_3\bmod 5,$ then the third equation in \cite[eq.\ (16)]{ELehmer} gives $q_2\equiv q_4\bmod 5$ and the first one yields $q_1\equiv q_2\bmod 5.$ So $q$ is Type~2 if and only $q_1\equiv q_2\equiv q_3\equiv q_4\bmod 5.$
\end{proof}

\begin{proof}[Proof of Theorem~\ref{thmZ25}]
We know from Lemma~\ref{repping} that we can achieve $5^3q$ for any Type~1 prime $q\equiv 1 \bmod 5$.
From the classic results of Newman and Lacquer we can achieve any $m$ with $\gcd(m,5)=1,$ and so by multiplicativity the $5^3mq;$ indeed we just need to multiply the polynomial by $\prod_{q^{\alpha}\parallel m} \Phi_q(x)^{\alpha},$ changing $F(1)$ by a factor of $m$ and leaving the other $N_i(F)$ unchanged.

It remains to show that  any  $M_G(F)=5^3m$, $5\nmid m,$  must be divisible by a Type~1 prime $q\equiv 1 \bmod 5$.
We show slightly more; namely that we cannot have $F(1)=5m_0$, $5\nmid m_0,$ $N_1(F)=5m_1,$ $N_2(F)=5m_2$ with $m_1$ only 5-norm Type~2 and $m_2$ only $25$-norm Type~2.
In particular, by Lemma~\ref{Type1or2} and Lemma~\ref{primepowers}, this says that  $N_1(F)$ must be divisible by a Type~1 prime $q\equiv 1 \bmod 5$ or $N_2(F)$ by a Type~1 prime $q\equiv 1 \bmod 25$. 

Suppose then that $N_2(F)=5m_2$ where $m_2$  is only $25$-norm Type~2.
Setting $F(\omega_2)=(1-\omega_2)f(\omega_2)$, we can apply Lemma~\ref{lemNormRep} to $f(\omega_2)$ and use the 
resulting $25$-norm Type~2 representation for $m_3$ and then the value $F(1)=5m_0$ to write
$$ F(x) = (1-x)u_1(x) \left(1+ (x-1)^3 h(x)\right) +\Phi_5(x^5)g_1(x),\;\;\; u_1(x)=\prod_{\stackrel{2\leq  j<25}{5\nmid j}} u(x^j) $$
with $5\mid h(1)$ and $u(x)=\pm x^I (1+x)^J(x^4+x)^{2K},$ for some $I,J,K\geq 0$, and
$$ g_1(x)= m_0 + (1-x)g(x), $$
for some $g\in \mathbb Z[x].$
Hence
$$ N_1(F)=5 N_1\left(1+(x-1)^3h(x) +\left(\prod_{j=2}^4u_1(x^j)(1-x^j)\right)(m_0+ (1-x)g(x))\right).$$
But $5\nmid m_0$ and $5\mid h(1)$ results in a 5-norm Type~1 representation for $m_1$.
\end{proof}

\section{Proof of Theorem \ref{thmZ27}}\label{secPf27}

Throughout this section, $\omega_i$ denotes a primitive $3^i$th root of unity.
Note that $\mathbb Z[\omega_i]$ is a unique factorization domain for $i=1$, $2$ and $3$ \cite[Thm.~11.1]{Washington}.
We begin again with a way of using units to reduce elements in $\mathbb Z[\omega_i].$

\begin{lemma}\label{27normrep}
Suppose that $f(x)\in \mathbb Z[x]$ with $3\nmid N_1(f)$.
\begin{enumerate}[label=(\roman*)]
\item\label{27normrep1}
For some  $J\in\{0,1,2\}$ and $\delta =1$, $2$ or $4$ as $N_1(f)\equiv 1$, $4$ or $7 \bmod 9$, we have
\be \label{3normforma}  \pm \omega_1^J f(\omega_1) =\delta + 3A(\omega_1-1) +9b, \;\; A,b\in \mathbb Z, \ee
and
\be \label{3normformb}  \pm \omega_1^J f(\omega_1) (1-\omega_1)=\delta (1-\omega_1) +9A +9B(1-\omega_1), \;\; B=b-A. \ee
\item\label{27normrep2}
For any $i\geq 2$ 
we can write
\be \label{genred3} u(\omega_i) f(\omega_{i}) =1+ (\omega_{i}-1)^5 h(\omega_{i}), \ee
for some 
unit $u(\omega_i)$ and $h(x)\in \mathbb Z[x].$
Further, we may take
\be \label{uform} u(x)=\pm x^I(1+x)^J(x^4+1)^{K}\;\;\; I,J,K\geq 0.\ee
\item\label{27normrep3}
If $3\mid h(1)$ in \eqref{genred3} we can write
\be \label{type56}  u(\omega_i) f(\omega_{i}) =1+ (\omega_{i}-1)^7 t(\omega_{i}) \ee
with $t(x)\in\mathbb{Z}[x]$.
\item\label{27normrep4}
If $i=2$ and $3\nmid h(1)$ in \eqref{genred3} then for some $u(x)$ of the form \eqref{uform}, we can take
\be \label{type1form}  (1-\omega_2)u(\omega_2) f(\omega_{2}) =1-\omega_2 + 3\delta (\omega_2) + 3(\omega_{2}-1)^2t(\omega_2) \ee
with $\delta (x)=x$ or $-1$ and $t(x)\in\mathbb{Z}[x]$.
\end{enumerate}
\end{lemma}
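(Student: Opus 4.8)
Here is how I would go about proving Lemma~\ref{27normrep}; the four parts are successive unit normalizations, with \ref{27normrep4} falling out of \ref{27normrep3}.

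\emph{Part \ref{27normrep1}.} Work in $\mathbb Z[\omega_1]$ with $\mathbb Z$-basis $\{1,\omega_1-1\}$ and write $f(\omega_1)=a+b(\omega_1-1)$; since $N_1(f)\equiv a^2\bmod 3$ and $3\nmid N_1(f)$ we have $3\nmid a$. The identity $(\omega_1-1)^2=-3\omega_1$ (from $\Phi_3$) shows that multiplication by $\omega_1=1+(\omega_1-1)$ sends $(a,b)\mapsto(a-3b,a-2b)$, so the coefficient of $\omega_1-1$ runs through all residues mod $3$ as we iterate; pick $J\in\{0,1,2\}$ clearing it mod $3$, giving $\pm\omega_1^J f(\omega_1)=a'+3A(\omega_1-1)$. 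Its norm is $\equiv(a')^2\bmod 9$, and since norms from $\mathbb Z[\omega_1]$ lie in $\{1,4,7\}\bmod 9$ — whose square roots up to sign are $1,2,4$ — the sign choice pins $a'\equiv\delta\bmod 9$, which is \eqref{3normforma}; multiplying by $1-\omega_1$ and using $(\omega_1-1)^2=-3+3(1-\omega_1)$ gives \eqref{3normformb}.

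\emph{Part \ref{27normrep2}.} Put $\tau=\omega_i-1$. For $i\ge2$ we have $\phi(3^i)=2\cdot3^{i-1}\ge6$, so $(3)=(\pi_i)^{\phi(3^i)}\subseteq(\tau^6)\subseteq(\tau^5)$; hence modulo $\tau^5$ one may treat $3$ as $0$ and $\mathbb Z[\omega_i]/(\tau^5)\cong\mathbb F_3[\tau]/(\tau^5)$. Write $f(\omega_i)=\sum_{j\ge0}a_j\tau^j$ with $3\nmid a_0$, fix the sign so that $a_0\equiv1\bmod3$ (hence $\equiv1\bmod\tau^6$), and then clear the coefficients of $\tau^1,\tau^2,\tau^3,\tau^4$ modulo $3$ one at a time: $\omega_i^I=(1+\tau)^I$ adjusts the $\tau^1$-coefficient through all residues; $1+\omega_i=2+\tau\equiv-(1-\tau)\bmod3$ (so $-\omega_i(1+\omega_i)\equiv1-\tau^2\bmod3$) handles $\tau^2$; $\omega_i^3=(1+\tau)^3\equiv1+\tau^3\bmod\tau^5$ handles $\tau^3$; and an appropriate combination involving $\omega_i^4+1$ handles $\tau^4$. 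What survives is $1+\tau^5h(\omega_i)$, and the product of units used has the shape \eqref{uform}. The only real work here is the coefficient bookkeeping verifying that these four families of units genuinely suffice; this is routine but laborious, and is the most calculation-heavy step.

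\emph{Parts \ref{27normrep3} and \ref{27normrep4}.} From \eqref{genred3}, $u(\omega_i)f(\omega_i)=1+\tau^5h(\omega_i)$; writing $h(\omega_i)=h(1)+\tau h_1(\omega_i)$ and using $3\tau^5\in(\tau^{5+\phi(3^i)})\subseteq(\tau^7)$, the hypothesis $3\mid h(1)$ gives $u(\omega_i)f(\omega_i)=1+\tau^6H(\omega_i)$ with $H(1)=h_1(1)$, so only the single coefficient $h_1(1)\bmod3$ remains. To kill it, multiply by a unit $v\in\mathbb Z[\omega_i]^\times$ with $v\equiv1\bmod(\omega_i-1)^6$ but $v\not\equiv1\bmod(\omega_i-1)^7$: for $i=2$ one may take $v=-(\omega_2+\omega_2^{-1})^3=1-3(\omega_2+\omega_2^{-1})$, since the minimal relation $(\omega_2+\omega_2^{-1})^3=3(\omega_2+\omega_2^{-1})-1$ gives $v\equiv1\bmod3$, and because $(\omega_2-1)^6$ is a unit times $3$ one computes $v\equiv1-(\omega_2-1)^6\bmod(\omega_2-1)^7$ (and $v,v^2$ realize both nonzero residues); for general $i\ge2$ one uses a suitable product of cyclotomic units in the same way. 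This proves \ref{27normrep3}. For \ref{27normrep4} with $i=2$ and $3\nmid h(1)$, multiplying \eqref{genred3} by $1-\omega_2=-\tau$ and using $(\omega_2-1)^6=3\cdot(\text{unit})$, $(\omega_2-1)^7=3\tau\cdot(\text{unit})$ yields $(1-\omega_2)u(\omega_2)f(\omega_2)=(1-\omega_2)+3\beta(\omega_2)$ with $\beta(\omega_2)\equiv(\text{unit})\cdot(-h(1)-h_1(1)\tau)\bmod\tau^2$; since $3\nmid h(1)$ the $\tau^0$-part of $\beta$ is $\pm1$, and applying the unit $v$ above (which fixes $h(1)\bmod3$ and shifts $h_1(1)\bmod3$) steers the $\tau^1$-part of $\beta$ into the slot matching the sign, so $\beta(\omega_2)\equiv\delta(\omega_2)\bmod(\omega_2-1)^2$ with $\delta(x)=x$ or $-1$, which is \eqref{type1form}.

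I expect the main obstacle to be twofold: the explicit congruence bookkeeping in \ref{27normrep2}, and — more substantively — producing, for \ref{27normrep3} and hence \ref{27normrep4}, a unit of $\mathbb Z[\omega_i]$ congruent to $1$ modulo $(\omega_i-1)^6$ but not modulo $(\omega_i-1)^7$, i.e. controlling the image of the unit group in $(\mathbb Z[\omega_i]/(\omega_i-1)^7)^\times$. For $i=2$ the relation $(\omega_2+\omega_2^{-1})^3=3(\omega_2+\omega_2^{-1})-1$ makes this transparent; handling all $i\ge2$ uniformly is where care is needed, and at the level of exact powers of $\pi_i$ this is precisely the regime governed by Lemma~\ref{Newman}.
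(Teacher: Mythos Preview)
Your approach is the same as the paper's: successive $\pi$-adic normalizations by explicit cyclotomic units. Parts \ref{27normrep1} and \ref{27normrep2} are essentially identical to the paper's argument (the paper even spells out the same $u_1,\dots,u_4$ you sketch). For \ref{27normrep3} and \ref{27normrep4} the paper uses the single unit
\[
u_5=-x^3(1+x)^3\equiv 1-\pi^6 \pmod{3\pi^2}
\]
(and its square), which is already of the form \eqref{uform} and works uniformly for every $i\ge2$, so both obstacles you flag dissolve at once.

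Your alternative $v=-(\omega_2+\omega_2^{-1})^3=1-3(\omega_2+\omega_2^{-1})$ does satisfy $v\equiv1\bmod(\omega_2-1)^6$ and $v\not\equiv1\bmod(\omega_2-1)^7$, so the congruence part of \ref{27normrep3} goes through for $i=2$. But part \ref{27normrep4} explicitly requires the final unit to be of the shape \eqref{uform}, and your $v$ (built from $1+\omega_2^2$ rather than $1+\omega_2$ or $1+\omega_2^4$) is not visibly of that form; this matters downstream, since the proof of Theorem~\ref{thmZ27} relies on the unit being a polynomial of the prescribed shape evaluated at $\omega_3$. Replacing your $v$ by the paper's $-x^3(1+x)^3$ fixes this with no change to your logic and simultaneously handles all $i\ge2$.
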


\begin{proof}
\ref{27normrep1} Working mod $x^2+x+1$, we may write $f(\omega_1)=a+b(\omega_1-1).$
Note that $3\nmid a$  since $3\nmid N_1(f).$ If $3\nmid b$ we can replace $f(\omega_1)$ by
$$   \omega_1 f(\omega_1)= (a-3b)+(a-2b)(\omega_1-1),\;\;\;\omega_1^2 f(\omega_1)= (3b-2a)+(b-a)(\omega_1-1),   $$
as $a\equiv -b$ or $a\equiv b \bmod 3$ respectively.
That is, we may select $J\in\{0,1,2\}$ so that $\omega_1^J f(\omega_1)=a+3b(\omega_1-1)$.
Now $\pm a\equiv 1$, $2$ or $4 \bmod 9$ and $\pm \omega_1^Jf(\omega_1)=\delta + 3B(\omega_1-1)+9A$ as claimed in \eqref{3normforma}, and \eqref{3normformb} follows easily. Plainly $N_1(f)\equiv \delta^2 \bmod 9$.

\vspace{1ex}
\noindent
\ref{27normrep2} Fix $i\geq2$, and write $x=\omega_i$ and $\pi=1-\omega_i.$
Since $3$ is $\pi^{\phi (3^i)}$ multiplied by a unit and $\phi (3^i)\geq 6,$  it will be enough to work mod $3$.
Since $3\nmid f(1)$ we can write
$$ \pm f(x) = 1 + \sum_{j\geq 1} a_j\pi^j. $$
Since $x=1-\pi$ and $-(1+x) = 1+\pi -3$, we can select $u_1 \in \{1, x, -1-x\}$ so that
$$ \pm u_1f(x)= 1+\sum_{j\geq 2} b_j\pi^j. $$
From $-x(1+x)\equiv 1-\pi^2 \bmod 3$, $x^2(1+x)^2 \equiv 1+\pi^2+\pi^4 \bmod 3$, taking $u_2=1,-x(1+x)$ or $x^2(1+x)^2$ we get
$$  \pm u_1u_2f(x)= 1+\sum_{j\geq 3} c_j\pi^j. $$
With $u_3=1$, $x^3=(1-\pi)^3\equiv 1-\pi^3 \bmod 3$, or $x^6\equiv 1+\pi^3+\pi^6 \bmod 3$, we obtain
$$  \pm u_1u_2u_3f(x)= 1+\sum_{j\geq 4} d_j\pi^j. $$
Finally, with $u_4=1$, $-x^6(1+x)^2(1+x^4)\equiv 1+\pi^4 \bmod \langle 3,\pi^5\rangle$,  or $x^{12}(1+x)^4(1+x^4)^2\equiv 1-\pi^4 \bmod \langle 3,\pi^5\rangle$, we get
$$  \pm u_1u_2u_3u_4 f(x)= 1+\sum_{j\geq 5} e_j\pi^j, $$
giving \eqref{genred3} and \eqref{uform}.

\vspace{1ex}
\noindent
\ref{27normrep3} If $3\mid e_5$ then this becomes  $  \pm u_1u_2u_3u_4 f(x)= 1+\sum_{j\geq 6} t_j\pi^j. $
With $u_5=1$ or $-x^3(1+x)^3\equiv 1-\pi^6 \bmod 3\pi^2$ or $x^6(1+x)^6\equiv 1+\pi^6 \bmod \langle 3\pi^2,\pi^{12}\rangle$ we get
$$  \pm u_1u_2u_3u_4u_5 f(x)= 1+\sum_{j\geq 7} s_j\pi^j, $$
and the form \eqref{type56}.

\vspace{1ex}
\noindent
\ref{27normrep4} If $i=2$ and $3\nmid e_5$ then using that $\pi^6$ is 3 times a unit we get 
$  \pm u_1u_2u_3u_4 (1-x)f(x)= \pi  +3s(x)$  with $3\nmid s(1)$, and we can write this as $\pi+3\delta(x)+A\pi^7+3\pi^2t_1(x)$ with $\delta(x)=x$ or $-1$ as $s(1)$ is $1$ or $-1 \bmod 3$, $A=0$ or $\pm 1,$ and $t_1(x)\in\mathbb{Z}[x]$. 
Taking $u_5$ as above we get  $\pm u_1u_2u_3u_4 u_5\pi f(x)=\pi+3\delta(x) +3\pi^2 t_2(x)$
and the form \eqref{type1form}.
\end{proof}

We shall say that \eqref{3normforma} and \eqref{genred3} are \textit{Type~1} representations of $f(\omega_i)$ if $3\nmid A$ or $3\nmid h(1)$ respectively, and \textit{Type~2} representations if $3\mid A$ or  $3\mid h(1)$. 
For $i\geq2$, we say a Type~2 representation \eqref{type56} is \textit{Type~3} or \textit{Type~4} depending on whether $3\nmid t(1)$ or $3\mid t(1)$ respectively.

\begin{lemma}\label{notboth}
Suppose that $f(x)\in \mathbb Z[x]$ with $3\nmid N_1(f)$, and let $i\geq1$.
\begin{enumerate}[label=(\roman*)]
\item\label{notboth1}
$f(\omega_i)$ cannot have both a Type~1 and a Type~2 representation.
\item\label{notboth2}
If Type~2, then $f(\omega_i)$  cannot have both a Type~3 and a Type~4 representation.
\item\label{notboth3}
The conjugates $f(\omega_i^j)$, $3\nmid j$, will all be of Type~1 or all of Type~2, and if Type~2, then all will be of Type~3 or all will be of Type~4.
\item\label{notboth4}
A unit has only Type~2 and Type~4 representations.
\end{enumerate}
\end{lemma}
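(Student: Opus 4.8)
The plan is to obtain all four parts from Newman's unit criterion (Lemma~\ref{Newman}) together with the observation that the type of a representation is unchanged by Galois conjugation. Write $\pi=1-\omega_i$, and recall that $3\nmid N_1(f)$ forces $\pi\nmid f(\omega_i)$, that $\pi\mid 3$ with $(\pi)\cap\mathbb Z=3\mathbb Z$, that $\pi\parallel(\omega_i^a-1)$ whenever $3\nmid a$, and that any polynomial unit $u(x)$ (meaning $u(\omega_i)$ is a unit in $\mathbb Z[\omega_i]$) has $3\nmid u(1)$.

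For \ref{notboth1} I argue by contradiction. Suppose first $i\ge2$ and $f(\omega_i)$ has a Type~1 representation $u_1f(\omega_i)=1+\pi^5h_1(\omega_i)$ with $3\nmid h_1(1)$ and a Type~2 one $u_2f(\omega_i)=1+\pi^5h_2(\omega_i)$ with $3\mid h_2(1)$, for units $u_1,u_2$. Cross-multiplying cancels $f(\omega_i)$ and leaves $u_1-u_2=\pi^5\bigl(u_2h_1(\omega_i)-u_1h_2(\omega_i)\bigr)$; the bracket reduces modulo $\pi$ to $u_2(1)h_1(1)$, which is nonzero mod $\pi$, so it is a $\pi$-unit and $\pi^5\parallel(u_1-u_2)$. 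Since $5$ is odd and not a power of $3$, this contradicts Lemma~\ref{Newman}. When $i=1$ the two representations are $u_kf(\omega_1)=\delta+3A_k(\omega_1-1)+9b_k$ with the \emph{same} $\delta$ (determined by $N_1(f)\bmod9$) and $3\nmid A_1$, $3\mid A_2$; subtracting and cancelling the $\pi$-unit $f(\omega_1)$ gives $\pi^3\parallel(u_1-u_2)$, because $3(\omega_1-1)$ contributes exactly three factors of $\pi$ (two from $3$, one from $\omega_1-1$) while $9$ contributes four, and $3\nmid A_1-A_2$. Lemma~\ref{Newman} with $t=1$ forbids the odd exponent $3$. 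Part~\ref{notboth2} is the same argument with $\pi^5$ replaced by $\pi^7$: two representations of shape \eqref{type56} with $3\nmid t_1(1)$ and $3\mid t_2(1)$ force $\pi^7\parallel(u_1-u_2)$, impossible since $7$ is odd and not a power of $3$.

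For \ref{notboth3} I show the Galois automorphism $\sigma_j\colon\omega_i\mapsto\omega_i^j$ (for $3\nmid j$) sends a Type~$k$ representation of $f(\omega_i)$ to a Type~$k$ representation of $f(\omega_i^j)$. It sends $u(\omega_i)f(\omega_i)=1+(\omega_i-1)^5h(\omega_i)$ to $u(\omega_i^j)f(\omega_i^j)=1+(\omega_i^j-1)^5h(\omega_i^j)$, and since $(\omega_i^j-1)/(\omega_i-1)=1+\omega_i+\dots+\omega_i^{j-1}$ is a unit with value $j$ at $\omega_i=1$ (and similarly for its inverse, using $jj'\equiv1\bmod3^i$), bringing this back to the normalized form only multiplies the decisive coefficient's value at $1$ by a unit that is $\equiv1\pmod3$; hence $3\nmid h(1)$ is preserved. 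The same computation handles \eqref{type56} (exponent $7$) and, for $i=1$, \eqref{3normforma} via the identity $\omega_1^2-1=-3-(\omega_1-1)$, which turns $\delta+3A(\omega_1^2-1)+9b$ into $\delta-3A(\omega_1-1)+9(b-A)$. Together with \ref{notboth1}--\ref{notboth2}, which show no element carries two types, all conjugates carry one.

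For \ref{notboth4}, if $f(\omega_i)$ is a unit then $f(\omega_i)^{-1}=v(\omega_i)$ for some $v\in\mathbb Z[x]$, and $v(\omega_i)f(\omega_i)=1=1+(\omega_i-1)^5\cdot0=1+(\omega_i-1)^7\cdot0$ (and $1=\delta+3\cdot0\cdot(\omega_1-1)+9\cdot0$ with $\delta=1$ when $i=1$): this is a Type~2 representation that is moreover Type~4, so by \ref{notboth1} there is no Type~1 representation and by \ref{notboth2} no Type~3 one. The only delicate points are keeping the divisibility by $\pi$ \emph{exact} in \ref{notboth1}--\ref{notboth2} — that is, not destroying the asymmetry $3\nmid h_1(1)$, $3\mid h_2(1)$ when clearing $f(\omega_i)$ — and the normalization bookkeeping in \ref{notboth3}; both hinge only on $c_j(1)=j$ being coprime to $3$ and on polynomial units taking values coprime to $3$.
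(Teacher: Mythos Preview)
Your proof is correct and follows the same route as the paper: derive $\pi^5\parallel(u_1-u_2)$ (resp.\ $\pi^7$) from the two representations and invoke Newman's criterion, and for \ref{notboth3} transport a representation through $\omega_i\mapsto\omega_i^j$ and check that the critical value at $1$ only changes by a factor prime to $3$. The only cosmetic slip is that in \ref{notboth3} the multiplier $j^5$ is $\equiv\pm1\pmod3$, not always $\equiv1$ as you wrote (this is still all that is needed); for the case $i=1$ in \ref{notboth1} the paper simply enumerates the possible differences $\pm\omega_1^{J_1}\mp\omega_1^{J_2}$ rather than citing Lemma~\ref{Newman}, but your appeal to it is equally valid.
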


\begin{proof}
\ref{notboth1} For $i=1$, suppose that $\pm \omega_1^{J_1}f(\omega_1)= \delta + 3A_1(\omega_1-1)+9B_1$, $3\nmid A_1$ and
$\pm \omega_1^{J_2}f(\omega_1)= \delta + 3A_2(\omega_1-1)+9B_2$, $3\mid A_2.$ So $(1-\omega_1)^3\parallel \pm \omega_1^{J_1} \mp \omega_1^{J_2},$ but this is either $0$ or divisible by exactly one $(1-\omega_1)$ or not divisible by $(1-\omega_1).$
Suppose then that $i\geq 2$ and that we had $u_j(\omega_i)f(\omega_i)=1+(\omega_i-1)^5 h_j(\omega_i),$ $j=1,2,$ with $3\nmid h_1(1)$ and $3\mid h_2(1)$.
Then 
$$u_1(\omega_i)-u_2(\omega_i) = (\omega_i-1)^5 \left( u_2(\omega_i)h_1(\omega_i)- u_1(\omega_i)h_2(\omega_i)\right)$$ with $(\omega_i-1)\nmid h_1(\omega_i)$, $(\omega_i-1)\mid h_2(\omega_i).$
But then $(\omega_i-1)^5\parallel (u_1(\omega_i)-u_2(\omega_i))$ contradicting Lemma~\ref{Newman}.

\vspace{1ex}
\noindent
\ref{notboth2} In the same way having both a Type~3 and Type~4 is ruled out since we cannot have  $(\omega_i-1)^7\parallel (u_1(\omega_i)-u_2(\omega_i)).$

\vspace{1ex}
\noindent
\ref{notboth3} For $i\geq 2$ observe that the reduction \eqref{genred3} yields $u(\omega_i^j)f(\omega_i^j) =1 + (\omega_i-1)^5h_j(\omega_i)$ with $h_j(x)=(1+x+\cdots + x^{j-1})^5h(x)$, and $3\mid h_j(1)$ exactly when $3\mid h(1)$.
The argument is similar for Types~3 or~4. For $i=1$ the conjugate of \eqref{3normforma} has
\be \label{i=1}  \delta +3A(\omega_1^2-1)+9b= \delta - 3A(\omega_1-1)+9(b-A). \ee
This establishes \ref{notboth3}.

\vspace{1ex}
\noindent
\ref{notboth4} Note that $1$ is written in Type~2 and Type~4 form, ruling out a unit being Type~1 or Type~3.
\end{proof}

We shall say that an integer $m$ is a \textit{$3^i$-norm} if $m=N_i(f)$ for some $f\in\mathbb Z[x]$.
Note this says that $m$ is a product of $q^f\equiv 1 \bmod 3^i.$
From Lemma~\ref{27normrep} we immediately obtain the following.

\begin{lemma}\label{lem3norms}
If $m$ is a $3$-norm, then for some $A,B\in \mathbb Z$ we have
\be \label{1norms} 3m=N_1(\delta (1-x) +  9A + 9B(x-1) ) \ee
with $\delta=1,2$ or $4$ as $m\equiv 1$, $4$ or $7 \bmod 9$.
If $m$ is a $3^i$-norm with $i\geq 2,$  then for some $h(x)$ in $\mathbb Z [x]$ we have
\be \label{inorms}  m=N_i(1+(x-1)^5h(x)),\ee
and if $3\mid h(1)$ there is a $t(x)$ in $\mathbb Z[x]$ with
\be \label{inorms56}  m=N_i(1+(x-1)^7t(x)).\ee
If  $m=N_2(1+(x-1)^5h(x))$ with  $3\nmid h(1),$ then  we can write
\be\label{2norms} 3m = N_2\left(1-x+3\delta (x )+ 3(x-1)^2 t(x)\right), \;\;\; \delta(x)=x \text{ or } -1.\ee
\end{lemma}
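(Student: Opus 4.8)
The plan is to obtain each of the four displayed identities by applying the matching part of Lemma~\ref{27normrep} to a polynomial $f$ with $N_i(f)=m$ and then taking the norm $N_i(g)=\prod_{3\nmid j}g(\omega_i^j)$ of both sides. The facts I would use about $N_i$ are all routine: it is multiplicative on $\mathbb Z[x]$; it satisfies $N_i(g)\ge0$ for every $g\in\mathbb Z[x]$, since $\phi(3^i)$ is even and the factors split into complex-conjugate pairs $g(\omega_i^j)\overline{g(\omega_i^j)}=|g(\omega_i^j)|^2$; it takes the value $\pm1$ on a unit; and $N_i(1-x)=\Phi_{3^i}(1)=3$. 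Writing $m=N_i(f)$ and using that $m$ is coprime to $3$ (it is a product of primes $\equiv1\bmod3^i$, as noted before the lemma), we get $N_1(f)\equiv f(1)^2\not\equiv0\bmod3$, i.e.\ $3\nmid N_1(f)$, which is exactly the hypothesis under which Lemma~\ref{27normrep} is stated, so that lemma is applicable to $f$ throughout.

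For \eqref{1norms} I would feed $f$ into Lemma~\ref{27normrep}\ref{27normrep1} in the second form \eqref{3normformb}, $\pm\omega_1^{J}f(\omega_1)(1-\omega_1)=\delta(1-\omega_1)+9A+9B(1-\omega_1)$, and take $N_1$: the left side has norm $N_1(1-x)\,N_1(f)=3m$, the unit $\pm\omega_1^J$ and any sign being killed by $N_1\ge0$, and the right side is $N_1\bigl(\delta(1-x)+9A+9B(1-x)\bigr)$; replacing $B$ by $-B$ gives the stated shape, while $\delta$ is pinned down by the congruence $N_1(f)\equiv\delta^2\bmod9$ already recorded in Lemma~\ref{27normrep}\ref{27normrep1}. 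For \eqref{inorms} and \eqref{inorms56}, with $i\ge2$, I would apply Lemma~\ref{27normrep}\ref{27normrep2}, and Lemma~\ref{27normrep}\ref{27normrep3} when $3\mid h(1)$, getting $u(\omega_i)f(\omega_i)$ equal to $1+(\omega_i-1)^5h(\omega_i)$, respectively $1+(\omega_i-1)^7t(\omega_i)$; taking $N_i$ and cancelling the unit factor $N_i(u)=\pm1$ (again absorbed by positivity) yields $m=N_i(1+(x-1)^5h(x))$, respectively $m=N_i(1+(x-1)^7t(x))$; note these polynomials have constant term $1$, so no stray factor of $3$ appears. For \eqref{2norms} the hypothesis already hands us $m=N_2(f)$ with $f(x)=1+(x-1)^5h(x)$, $3\nmid h(1)$, i.e.\ $f$ in the form \eqref{genred3} with $u=1$ and of Type~1, so Lemma~\ref{27normrep}\ref{27normrep4} supplies a unit $u(\omega_2)$ with $(1-\omega_2)u(\omega_2)f(\omega_2)=1-\omega_2+3\delta(\omega_2)+3(\omega_2-1)^2t(\omega_2)$; taking $N_2$ and using $N_2(1-x)=3$, $N_2(u)=\pm1$ gives $3m=N_2\bigl(1-x+3\delta(x)+3(x-1)^2t(x)\bigr)$.

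I do not expect any real obstacle: as the authors remark, the lemma follows immediately from Lemma~\ref{27normrep}, and the statements only assert the existence of the auxiliary $A,B$, $h$, $t$, so there is nothing to optimise. The one point that deserves a sentence rather than none is the sign in ``taking norms'': one must be sure that inserting the unit factors $\pm\omega_1^J$ or $\pm x^I(1+x)^J(x^4+1)^K$ and the factor $1-x$ cannot turn $3m$ into $-3m$, and this is handled once and for all by the observations that $N_i(g)\ge0$ for $g\in\mathbb Z[x]$ and that $m$ (hence $3m$) is positive.
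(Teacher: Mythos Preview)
Your proposal is correct and matches the paper's approach exactly: the paper offers no separate proof, simply prefacing the lemma with ``From Lemma~\ref{27normrep} we immediately obtain the following,'' and you have spelled out precisely that immediate deduction --- apply the relevant part of Lemma~\ref{27normrep} to an $f$ with $N_i(f)=m$, take $N_i$ of both sides, and use $N_i(1-x)=3$ together with $N_i(u)=1$ for units (the latter forced by $N_i\ge0$). The only cosmetic point is that your justification of $3\nmid N_1(f)$ should route through $N_i(f)\equiv f(1)^{\phi(3^i)}\bmod 3$ (so $3\nmid m=N_i(f)$ gives $3\nmid f(1)$, hence $3\nmid N_1(f)$), which is the congruence the paper records in the proof of Lemma~\ref{Lemma1}.
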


We say that $m$ has a $3$-norm representation of Type~1 or Type~2 depending on whether $3\nmid A$ or $3\mid A$ respectively in \eqref{1norms}.
Similarly, for $i\geq2$ we say that $m$ has a $3^i$-norm representation of Type~1 or Type~2 as $3\nmid h(1)$ or $3\mid h(1)$ in \eqref{inorms}; in the latter case a Type~3 or Type~4 as $3\nmid t(1)$ or $3\mid t(1)$ in \eqref{inorms56}.

If $m$ has a $3^i$-norm representation of Type~1 or~3, then the same is true for the lower norms.

\begin{lemma}\label{lemlowernorm}
Suppose $m\in\mathbb{Z}$ and $i\geq2$.
\begin{enumerate}[label=(\roman*)]
\item\label{lemlowernorm1}
If $m$ has a $3^i$-norm representation of Type~1 or Type~2, then $m$ has the same type of $3^j$-norm representation 
for $2\leq j \leq i$, and ultimately
$$ 3m=  N_1(1-x + 9(A+B(1-x))), $$
with $3\nmid A$ for Type~1 and $3\mid A$ for Type~2.
\item\label{lemlowernorm2}
If $m$ has a $3^i$-norm representation of Type~3 or Type~4, then $m$ has the same type of $3^j$-norm representation for $2\leq j \leq  i.$
\end{enumerate}
\end{lemma}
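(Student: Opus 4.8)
\noindent The plan is to descend one level at a time using Lemma~\ref{reduction}. Write the hypothesised representation as $m=N_i(\mathcal F)$, with $\mathcal F(x)=1+(x-1)^5h(x)$ in case~\ref{lemlowernorm1} (of Type~1 or~2 according to $h(1)\bmod3$), or $\mathcal F(x)=1+(x-1)^7t(x)$ in case~\ref{lemlowernorm2} (of Type~3 or~4 according to $t(1)\bmod3$). Lemma~\ref{reduction} produces $m=N_{i-1}(g)$ with $g(x)=\mathcal F(x)+3k(x)$ for some $k\in\mathbb Z[x]$ and $g(1)=\mathcal F(1)N_1(\mathcal F)=N_1(\mathcal F)$. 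At each step one must put $g$ back into the standard form at level $i-1$ (via Lemma~\ref{27normrep} or Lemma~\ref{lem3norms}) and verify that its type is unchanged; iterating this gives the claim for all $2\le j\le i$, and in case~\ref{lemlowernorm1} a final reduction to level~$1$ produces the stated form $3m=N_1(1-x+9(A+B(1-x)))$.

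For a target level $j-1$ with $j-1\ge2$ in case~\ref{lemlowernorm1} (or $j-1\ge3$ in case~\ref{lemlowernorm2}) this is immediate: since $3=\pi_{j-1}^{\phi(3^{j-1})}\cdot(\text{unit})$ with $\phi(3^{j-1})\ge6>5$ (resp.\ $\ge18>7$), the term $3k(\omega_{j-1})$ has $\pi_{j-1}$-valuation at least $\phi(3^{j-1})$ and hence is divisible by $\pi_{j-1}(\omega_{j-1}-1)^5$ (resp.\ $\pi_{j-1}(\omega_{j-1}-1)^7$) in $\mathbb Z[\omega_{j-1}]$. Thus $g(\omega_{j-1})=1+(\omega_{j-1}-1)^5 h_1(\omega_{j-1})$ with $h_1\in\mathbb Z[\omega_{j-1}]$ and $h_1(1)\equiv h(1)\bmod3$ (likewise for $(x-1)^7$ and $t$); replacing $h_1$ by a polynomial---any element of $\mathbb Z[x]$ vanishing at $\omega_{j-1}$ is a multiple of $\Phi_{3^{j-1}}$, hence invisible to $N_{j-1}$---gives a standard-form level-$(j-1)$ representation of the same type.

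Only two descents are not covered this way: level $2\to1$ in~\ref{lemlowernorm1} and level $3\to2$ in~\ref{lemlowernorm2} (since $\phi(3)=2$ and $\phi(9)=6<7$). In the first, $\mathcal F(\omega_1)=1+(\omega_1-1)^5h(\omega_1)=1+9(2+\omega_1)h(\omega_1)$, so $g(1)=N_1(\mathcal F)=\lvert1+9(2+\omega_1)h(\omega_1)\rvert^2\equiv1\bmod9$; hence $3\mid k(1)$ and $m\equiv1\bmod9$, so $\delta=1$, and by Lemma~\ref{lem3norms} together with Lemma~\ref{27normrep}\ref{27normrep1} we have $3m=N_1(1-x+9A+9B(x-1))$, where $A$ is the coefficient of $3(\omega_1-1)$ in a normalised form $\pm\omega_1^J g(\omega_1)=\delta+3A(\omega_1-1)+9b$. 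A short computation in $\mathbb Z[\omega_1]$ (using $(\omega_1-1)^2=-3\omega_1$, so that $(\omega_1-1)^5$ is divisible by~$9$) shows that no unit twist is needed and $A\equiv k'(1)\bmod3$. It remains to identify $k'(1)\bmod3$: writing $\mathcal F(x)=f_0(x^3)+xf_1(x^3)+x^2f_2(x^3)$, the identities $\mathcal F(1)=1$ and $\mathcal F(\omega_1)=1+9(2+\omega_1)h(\omega_1)$ force $f_0(1)\equiv1$ and $f_1(1)\equiv f_2(1)\equiv0\bmod9$, so the cubic formula of Lemma~\ref{reduction} gives $g'(1)\equiv3f_0'(1)\bmod9$, i.e.\ $k'(1)\equiv f_0'(1)\bmod3$; and since $f_0(x^3)=\tfrac13\sum_{\ell=0}^{2}\mathcal F(\omega_1^\ell x)$ one has $9f_0'(1)=\mathrm{Tr}_{\mathbb Q(\omega_1)/\mathbb Q}\bigl(\omega_1\mathcal F'(\omega_1)\bigr)$, which on substituting $\mathcal F'(x)=5(x-1)^4h(x)+(x-1)^5h'(x)$ comes out $\equiv9h(1)\bmod27$. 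Therefore $A\equiv h(1)\bmod3$, as required. The level $3\to2$ step of~\ref{lemlowernorm2} is the same in spirit: now $\mathcal F(\omega_1)=1+(\omega_1-1)^7t(\omega_1)=1+27(1-\omega_1)t(\omega_1)$, so $N_1(\mathcal F)\equiv1\bmod27$ and $9\mid k(1)$; a $\pi_2$-adic analysis of $g(\omega_2)=1+(\omega_2-1)^7t(\omega_2)+3k(\omega_2)$ yields $g(\omega_2)=1+(\omega_2-1)^7t_1(\omega_2)$ with $t_1(1)\equiv t(1)+k'(1)\bmod3$, and the same trisection/trace computation---with $(\omega_1-1)^7=27(1-\omega_1)$ now forcing $f_1(1)\equiv f_2(1)\equiv0\bmod27$ and $9f_0'(1)=\mathrm{Tr}\bigl(\omega_1\mathcal F'(\omega_1)\bigr)\equiv0\bmod27$---gives $3\mid k'(1)$, hence $t_1(1)\equiv t(1)\bmod3$. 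Since $g(\omega_2)$ has a unique type by Lemma~\ref{notboth}, this is precisely Type~3 or~4 according to $t(1)\bmod3$.

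I expect the main obstacle to be exactly these two edge steps: there, reducing modulo~$3$ does not visibly preserve the coefficient ($h(1)$ or $t(1)$) that records the type, and the needed congruence has to be extracted from the trace identity $9f_0'(1)=\mathrm{Tr}\bigl(\omega_1\mathcal F'(\omega_1)\bigr)$, exploiting that the high power of $(x-1)$ in the standard form forces extra $3$-divisibility of $f_0(1),f_1(1),f_2(1)$ and of $\mathrm{Tr}\bigl(\omega_1\mathcal F'(\omega_1)\bigr)$. Everything else is routine $\pi$-adic bookkeeping.
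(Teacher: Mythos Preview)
Your approach is correct and the claims check out (including $f_1(1)\equiv f_2(1)\equiv0\bmod 9$ in case~\ref{lemlowernorm1}: writing $h(\omega_1)=H_0+H_1\omega_1$ and solving exactly gives $f_2(1)=-9H_0$, $f_1(1)=9H_1$, $f_0(1)=1+9H_0-9H_1$). The small sign ambiguity in $t_1(1)\equiv t(1)\pm k'(1)$ (coming from $3/(\omega_2-1)^6\equiv -1\bmod\pi_2$) is harmless since you show $3\mid k'(1)$. Note also that your trace identity $9f_0'(1)=\mathrm{Tr}(\omega_1\mathcal F'(\omega_1))$ silently uses $\mathcal F'(1)=0$, which holds here because of the high power of $(x-1)$.

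The route, however, differs from the paper's at precisely the two edge steps. For \ref{lemlowernorm1} at $2\to1$ the paper does not work with $g=\mathcal F+3k$ and trace identities; instead it first multiplies by $1-x$ to get $3m=N_2(1-x-3h(x))$ (the $(\omega_2-1)^6$ being $3$ times a unit), decomposes $h(x)=h_0(x^3)+xh_1(x^3)+x^2h_2(x^3)$, and feeds $f_0=1-3h_0$, $f_1=-1-3h_1$, $f_2=-3h_2$ into the explicit cubic formula \eqref{defggg}, which yields $g(x)\equiv 1-x-9h(1)\bmod 9(x-1)$ directly. For \ref{lemlowernorm2} the paper handles \emph{every} descent $i\to i-1$ (including $3\to2$) by one device: rewrite $(\omega_i-1)^7=(\omega_i^3-1)^2(\omega_i-1)\cdot(\text{unit})$, so $\mathcal F(x)=1+(x^3-1)^2(x-1)h(x)$; then \eqref{defggg} gives $g(x)=1+(x-1)^7h(x)+3(x-1)^2g_1(x)$, and since $3(x-1)^2$ has $\pi_{i-1}$-valuation $\ge\phi(3^{i-1})+2\ge8$ for all $i-1\ge2$, the type is visibly preserved. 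In short, the paper pre-processes the polynomial into a shape adapted to the substitution $x\mapsto x^3$, which replaces your trisection/trace computation; both work, but the paper's version avoids having to track $k'(1)\bmod3$ through the cubic formula.
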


\begin{proof}
\ref{lemlowernorm1} Suppose that $i\geq 2$. Applying Lemma \ref{reduction} we have $ N_i(F)=N_{i-1}(g)$ 
with $g(x)\equiv F(x) \bmod 3$.
When $i-1\geq 2$ we have $(1-\omega_{i-1})^6\mid 3$ and
$$ N_i(1+(x-1)^5h(x))=N_{i-1}(1 + (x-1)^5(h(x) + (x-1)t(x))), $$
with this a $3^{i-1}$-norm Type~1 or~2 as the original was a $3^i$-norm Type~1 or~2.
When $i=2,$ from~\eqref{2norms} we write 
$$3m=N_2(1-x-3h(x)),\;\; h(x)=h_0(x^3)+xh_1(x^3)+x^2h_2(x^3),$$
and use the explicit formula \eqref{defggg} with $f_0(x)=1-3h_0(x)$, $f_1(x)=-1-3h_1(x)$, $f_2(x)=-3h_2(x)$ and $3m=N_1(g(x))$ with
\begin{align*} g(x) & \equiv 1-9h_0(x) - x(1 + 9h_1(x) + 9h_2(x))  \bmod 27 \\
   &   \equiv 1-x -9h(1) \bmod 9(x-1). 
\end{align*}
Notice, reducing mod $x^2+x+1,$  that we can write $9(x-1)t(x)=9(x-1)(\alpha +\beta (x-1))=-27\beta +9(x-1)(\alpha -3\beta)$ and reduce to the form $g(x)=1-x -9A+9B(x-1)$ with $3\mid A$ exactly when $3\mid h(1)$.

\vspace{1ex}
\noindent
\ref{lemlowernorm2} Suppose that $i\geq 3$ and $m=N_i(F)$ with $F(x)=1+(x-1)^7h_1(x).$
Since $(\omega_i-1)^3=(\omega^3_i-1)u(\omega_i)$ with $u(\omega_i)$ a unit, we can replace this by $F(x)=1+(x^3-1)^2(x-1)h(x),$  with $3\nmid h(1)$ for Type~3 and $3\mid h(1)$ for Type~4.
Writing $h(x)=h_0(x^3)+xh_1(x^3)+x^2h_2(x^3)$ we get $F(x)=f_0(x^3)+xf_1(x^3)+x^2f_2(x^3)$ with
\begin{align*}  f_0(x) & =1+(x-1)^2(xh_2(x)-h_0(x)),\\
      f_1(x)&=(x-1)^2(h_0(x)-h_1(x)),\\
f_2(x)&=(x-1)^2(h_1(x)-h_2(x)), \end{align*}
and \eqref{defggg} becomes
\begin{align*}
g(x)  & = 1+(x-1)^6(x^3h_2(x)^3-h_0(x)^3)  + x(x-1)^6(h_0(x)^3-h_1(x)^3) \\
& +x^2(x-1)^6(h_1(x)^3-h_2(x)^3)+3(x-1)^2g_1(x)\\ & =  1+(x-1)^7(h_0(x)^3+xh_1(x)^3+x^2h_2(x)^3) + 3(x-1)^2g_1(x) \\
& =1+(x-1)^7 h(x)+3(x-1)^2g_1(x),
\end{align*}
which will be Type~3 if $3\nmid h(1)$ and Type~4 if $3\mid h(1).$
\end{proof}

We first show that we can achieve $3^4m$ for $m$ any $9$-norm of Type~1.

\begin{lemma}\label{lem9norm}
Let $m\in\mathbb{Z}$.
\begin{enumerate}[label=(\roman*)]
\item\label{lem9norm1}
If $m$ is a $9$-norm with a Type~1 representation then there is an $F(x) \in \mathbb Z[x]$ with 
$F(1)=N_1(F)=N_3(F)=3,$ $N_2(F)=3m$ and $M_{27}(F)=3^4m.$
\item\label{lem9norm2}
If $m$ is a $27$-norm with a Type~1 representation then there is also an $F(x) \in \mathbb Z[x]$ with 
$F(1)=N_1(F)=N_2(F)=3,$ $N_3(F)=3m$ and $M_{27}(F)=3^4m.$
\end{enumerate}
\end{lemma}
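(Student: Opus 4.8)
The plan is to exhibit explicit polynomials realising the stated norm data, in the style of the proof of Lemma~\ref{repping}. For part~\ref{lem9norm1}, since $m$ is a $9$-norm with a Type~1 representation, Lemma~\ref{lem3norms} (equation~\eqref{2norms}) gives $\delta(x)\in\{x,-1\}$ and $t(x)\in\mathbb Z[x]$ with $3m=N_2(p)$, where $p(x)=1-x+3\delta(x)+3(x-1)^2t(x)$. I would then set
$$ f(x)=(1-x)+\Phi_{27}(x)\,R(x),\qquad R(x)=\delta(x)+\bigl((x-1)^2+x\,\Phi_9(x)\bigr)t(x). $$
Using $\Phi_{27}(\omega_3)=0$, $\Phi_{27}(\omega_1)=\Phi_{27}(\omega_2)=\Phi_{27}(1)=3$ and $\Phi_9(\omega_1)=3$, one computes $f(\omega_3)=1-\omega_3$, $f(\omega_2)=p(\omega_2)$, $f(\omega_1)=(1-\omega_1)+3\delta(\omega_1)$ and $f(1)=3\bigl(\delta(1)+3t(1)\bigr)$. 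Hence $N_3(f)=\Phi_{27}(1)=3$ and $N_2(f)=N_2(p)=3m$; since $\delta(\omega_1)$ is a root of unity, $f(\omega_1)$ is $1-\omega_1$ times a unit of $\mathbb Z[\omega_1]$, so $N_1(f)=3$ (a direct check gives $N_1(1+2\omega_1)=N_1(-2-\omega_1)=3$), and $3\parallel f(1)$ because $\delta(1)=\pm1$. Finally I would pass from $f$ to $F$ exactly as in Lemma~\ref{repping}: multiply by a cyclotomic–unit polynomial $1+x+\cdots+x^{\ell-1}$ with $3\nmid\ell$ and subtract an integer multiple of $\dfrac{x^{27}-1}{x-1}$ (which vanishes at every $27$th root of unity other than $1$, where it equals $27$) so as to force $F(1)=3$, leaving $N_1,N_2,N_3$ unchanged; then $M_{27}(F)=3\cdot3\cdot3m\cdot3=3^4m$.

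For part~\ref{lem9norm2} the idea is parallel, with the large norm moved up to level~$3$. Here I would write $m=N_3\bigl(1+(x-1)^5h(x)\bigr)$ with $3\nmid h(1)$ (Lemma~\ref{lem3norms}, equation~\eqref{inorms}), take the base polynomial $(1-x)\bigl(1+(x-1)^5h(x)\bigr)$, which has $N_3=3m$, and add a correction $\Phi_{27}(x)R(x)$ supported away from $\omega_3$. Using $(1-\omega_1)(\omega_1-1)^5=27$ and $(1-\omega_2)^6=3\cdot(\text{unit of }\mathbb Z[\omega_2])$, the base contributes $(1-\omega_1)+27h(\omega_1)$ at $\omega_1$ and a unit multiple of $1-\omega_2$ at $\omega_2$, so $R$ must be chosen — using the congruence freedom afforded by $\Phi_9$, and if necessary a non-torsion unit of $\mathbb Z[\omega_2]$ — so that $f(\omega_1)$ is a root-of-unity multiple of $1-\omega_1$, $f(\omega_2)$ is a unit multiple of $1-\omega_2$, and $3\parallel f(1)$; the same $\ell,\lambda$ cleanup then gives $F(1)=3$, $N_1(F)=N_2(F)=3$, $N_3(F)=3m$, so $M_{27}(F)=3^4m$.

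The main obstacle throughout is the joint bookkeeping of norms and $3$-adic valuations: one must arrange $N_i(F)$ to equal $3$ \emph{exactly} (not $3$ times a larger integer) at the ``small'' levels while still keeping $v_3(F(1))=1$. Forcing $f$ to equal $1-x$ at two of the levels would make $f-(1-x)$ divisible by a product $\Phi_{3^i}(x)\Phi_{3^j}(x)$ and hence $9\mid f(1)$, which is fatal; so the correction terms must instead leave $f(\omega_i)$ as a carefully chosen unit multiple of $\pi_i=1-\omega_i$. Because $\mathbb Z[\omega_1]$ is imaginary quadratic — its only units being roots of unity — level~$1$ is the rigid case, and the point of the Type~1 hypothesis (via the shape of \eqref{2norms}, or of Lemma~\ref{lemlowernorm}, with Newman's Lemma~\ref{Newman} underlying the rigidity) is precisely to guarantee that the required root-of-unity multiple at level~$1$ is compatible with the level-$2$ and level-$3$ constraints. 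I expect this compatibility check to be the most delicate step, especially in part~\ref{lem9norm2}.
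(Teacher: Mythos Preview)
Your part~\ref{lem9norm1} is correct and is essentially the paper's own argument: your polynomial
\[
f(x)=(1-x)+\Phi_{27}(x)\bigl(\delta(x)+(x-1)^2t(x)\bigr)+x\,\Phi_9(x)\Phi_{27}(x)\,t(x)
\]
is exactly the one the paper writes down, and the $\ell,\lambda$ cleanup via \eqref{lambdered} is the same.

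Part~\ref{lem9norm2} is where there is a genuine gap. Your assertion that the base polynomial $(1-x)\bigl(1+(x-1)^5h(x)\bigr)$ already ``contributes a unit multiple of $1-\omega_2$ at $\omega_2$'' is not true in the relevant sense: $1+(\omega_2-1)^5h(\omega_2)$ is a $\pi_2$-adic unit, but its $N_2$-norm can be any integer prime to~$3$, so the correction $\Phi_{27}(x)R(x)$ must do real work at level~$2$, not merely tidy up a root-of-unity factor. You acknowledge that choosing $R$ is ``the most delicate step'' but do not carry it out, and there is no soft reason it can be done: the value $f(\omega_2)$ must be pushed from $(1-\omega_2)\cdot(\text{arbitrary $\pi_2$-unit})$ to $(1-\omega_2)\cdot(\text{global unit})$ using only an additive term $3R(\omega_2)$, while the same $R$ has to keep $f(\omega_1)$ a root-of-unity multiple of $1-\omega_1$ and $3\parallel f(1)$.

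The paper resolves this by an explicit $\pi_2$-adic expansion. Writing
\[
(\omega_2-1)\bigl(1+(\omega_2-1)^5h(\omega_2)\bigr)=(\omega_2-1)-3\delta(\omega_2)+b_1(\omega_2-1)^7+\sum_{j\ge2}3b_j(\omega_2-1)^j,
\]
with $\delta(x)\in\{1,-x\}$ matching $h(1)\bmod3$ and $b_1\in\{0,\pm1\}$, one absorbs the single obstructive term $b_1(\omega_2-1)^7$ into a \emph{polynomial} unit $u(x)\in\{1,\,-x^3(1+x)^3,\,x^6(1+x)^6\}$ (so that $(\omega_2-1)u(\omega_2)\equiv(\omega_2-1)+b_1(\omega_2-1)^7\bmod 3(\omega_2-1)^2$). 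This yields
\[
(\omega_2-1)\bigl(1+(\omega_2-1)^5h(\omega_2)\bigr)=(\omega_2-1)u(\omega_2)-3\delta(\omega_2)-3(\omega_2-1)^2g(\omega_2)
\]
for some $g\in\mathbb Z[x]$, and then one sets
\[
f(x)=(x-1)\bigl(1+(x-1)^5h(x)\bigr)+\Phi_{27}(x)\delta(x)+\Phi_{27}(x)(x-1)^2g(x)-\Phi_{27}(x)\Phi_9(x)\,t(x),
\]
choosing $t$ so that $f(\omega_1)=(\omega_1-1)+3\delta(\omega_1)$. The point you were missing is that the unit $u$ must be realised as the value at $\omega_2$ of a fixed integer polynomial (so that it also makes sense at $\omega_1$ and at $1$), and that the Type~1 hypothesis $3\nmid h(1)$ is exactly what allows the constant term $-3h(1)$ to be matched by $-3\delta(\omega_2)$ with $\delta(x)\in\{1,-x\}$, leaving only one further coefficient to kill with $u$.
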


\begin{proof} 
\ref{lem9norm1} Using Lemma~\ref{lem3norms}, we write $3m=N_2(1-x+ 3\delta(x) +3(x-1)^2t(x))$ with $\delta(x)=-1$ or $x,$ and observe that we can write 
$$1-\omega_1 + 3\delta (\omega_1) +3(\omega_1-1)^2t(\omega_1) =1-\omega_1 + 3\delta (\omega_1) -9\omega_1 t(\omega_1).$$
Select integers $\ell\geq 1$ and $\lambda$ so that $\ell ( \delta (1)+3t(1))=1+9\lambda$, so $3\nmid\ell$.
Set
\be \label{lambdered} F(x) =\left(\frac{ x^{\ell}-1}{x-1}\right) f(x) - \lambda \left(\frac{ x^{27}-1}{x-1}\right) , \ee
with
$$ f(x)=1-x + \Phi_{27}(x)\left(\delta (x)+(x-1)^2t(x)\right)+xt(x)\Phi_9(x)\Phi_{27}(x).$$
Then $N_3(F)=N_3(1-x)=3$, $N_2(F)=3m,$ $N_1(F)=N_1(1-x+3\delta(x))=3$ and $F(1)= \ell( 3\delta(1)+9t(1))-27\lambda=3.$

\vspace{1ex}
\noindent
\ref{lem9norm2} Suppose now that $m=N_3(1+(x-1)^5h(x))$ with $3\nmid h(1)$.
Using
\begin{gather*}
(\omega_2-1)^6 = -3\omega_2(\omega_2^2-\omega_2+1)(2\omega_2^2-3\omega_2+2),\\
3 = (\omega_2-1)^6(10+11\omega_2+7\omega_2^2+10\omega_2^3+4\omega_2^4-4\omega_2^5), 
\end{gather*}
and setting $\delta(x)=1$ if $h(1)\equiv 1 \bmod 3$ and $-x$ if $h(1)\equiv -1 \bmod 3$,
we can write
\begin{align*}
 (\omega_2-1)\left(1+(\omega_2-1)^5h(\omega_2)\right) =(\omega_2-1)-3h(1)+\sum_{j\geq 1} 3a_j(\omega_2-1)^j \\
   = (\omega_2-1)-3\delta(\omega_2) + b_1(\omega_2-1)^7 +\sum_{j\geq 2} 3b_j(\omega_2-1)^j,
\end{align*}
with $b_1=0,-1$ or $1.$ Taking $u(x)=1$, $-x^3(1+x)^3$ or $x^6(1+x)^6$ as $b_1=0$, $-1$ or $1$, we have $u(\omega_2)\equiv1+b_1(\omega_2-1)^6 \bmod 3(\omega_2-1)^2$ and
$$ (\omega_2-1)\left(1+(\omega_2-1)^5h(\omega_2)\right) =(\omega_2-1)u(\omega_2)  -3\delta(\omega_2)-3(\omega_2-1)^2g(\omega_2) $$
for some $g(x)\in\mathbb Z[x].$
For some $t(x)\in\mathbb Z[x]$ we can write
$$ (\omega_1-1)^6h(\omega_1)+3(\omega_1-1)^2g(\omega_1)= 9t(\omega_1). $$
For integers $\ell>0$ and $\lambda$ with $\ell (\delta(1)-3t(1))=1+9\lambda$ take $F(x)$ as in \eqref{lambdered}
with
$$  f(x)= (x-1)(1+(x-1)^5h(x)) + \delta(x)\Phi_{27}(x)+(x-1)^2\Phi_{27}(x)g(x) - \Phi_{27}(x)\Phi_9(x) t(x). $$
Then $F(1)=\ell (3\delta(1)-9t(1)) -27\lambda  =3,$ $N_1(F)=N_1(f)=N_1(x-1+3\delta(x))=3,$ $N_2(F)=N_2((x-1)u(x))=3$
and $N_3(F)=3m.$
\end{proof}

We next show that we can achieve $3^5m$ when $m$ is $3$-norm of Type~1, or $9$-norm of Type~3.

\begin{lemma}\label{lem243s}
Let $m\in\mathbb{Z}$.
\begin{enumerate}[label=(\roman*)]
\item\label{lem243s1}
If $m$ is a $3$-norm with a Type~1 representation, then there is an $F(x)\in\mathbb Z[x]$ with $F(1)=9,$ $N_1(F)=3m,$ $N_2(F)=N_3(F)=3$ and $M_{27}(F)=3^5m.$
\item\label{lem243s2}
If $m$ is a $9$-norm with a Type~3 representation, then there is an $F(x)\in\mathbb Z[x]$ with $F(1)=9,$ $N_2(F)=3m,$ $N_1(F)=N_3(F)=3$ and $M_{27}(F)=3^5m.$
\item\label{lem243s3}
If $m$ is a $27$-norm with a Type~3 representation, then there is also an $F(x)\in\mathbb Z[x]$ with $F(1)=9,$ $N_3(F)=3m,$ $N_1(F)=N_2(F)=3$ and $M_{27}(F)=3^5m.$
\end{enumerate}
\end{lemma}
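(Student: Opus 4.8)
The plan is to construct, in each case, an explicit $f(x)\in\mathbb Z[x]$ whose reductions at $\omega_1$, $\omega_2$ and $\omega_3$ realise the three required norms — one of them $3m$ and the other two equal to $3$ — and then to set
$$ F(x)=\left(\frac{x^{\ell}-1}{x-1}\right)f(x)-\lambda\left(\frac{x^{27}-1}{x-1}\right) $$
as in \eqref{lambdered}. When $3\nmid\ell$ the first factor is a unit at every primitive $3^i$th root of unity and the second vanishes there, so $N_i(F)=N_i(f)$ for $i=1,2,3$ while $F(1)=\ell f(1)-27\lambda$; choosing $\ell$ coprime to $3$ and $\lambda\in\mathbb Z$ appropriately will give $F(1)=9$, whence $M_{27}(F)=F(1)N_1(F)N_2(F)N_3(F)=9\cdot 3\cdot 3\cdot 3m=3^5m$, as required. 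The new wrinkle relative to Lemma~\ref{lem9norm} is that we need $F(1)$ to be exactly $9$, equivalently $9\parallel f(1)$ in the base polynomial; this is precisely what the Type~1 (resp.\ Type~3) hypothesis buys us.

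For \ref{lem243s1} one works from the representation $3m=N_1(\delta(1-x)+9A+9B(x-1))$, $3\nmid A$, supplied by \eqref{1norms}, arranging $f(\omega_1)$ (up to a unit) to be an element of norm $3m$ and $f(\omega_2)$, $f(\omega_3)$ to be units times $1-\omega_2$, $1-\omega_3$. The mechanism is the one used in Lemma~\ref{lem9norm}: build $f$ from a ``base'' that handles $\omega_2$ and $\omega_3$, together with the cyclotomic masks $\Phi_{27}(x)(\cdots)$ and $\Phi_9(x)\Phi_{27}(x)(\cdots)$; since $\Phi_{27}$ vanishes at $\omega_3$ and $\Phi_9\Phi_{27}$ vanishes at both $\omega_2$ and $\omega_3$, while $\Phi_9(\omega_1)=\Phi_{27}(\omega_1)=3$, the masks do not disturb the $\omega_2,\omega_3$ data and, using $(\omega_1-1)^2=-3\omega_1$, can be tuned to assemble the prescribed norm-$3m$ element at $\omega_1$. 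At $x=1$ the base drops out, the $\Phi_{27}$-mask contributes a multiple of $3$ and the $\Phi_9\Phi_{27}$-mask a multiple of $9$; the hypothesis $3\nmid A$ forces the surviving contribution to have $3$-adic valuation exactly $2$, after which $\ell$ and $\lambda$ can be picked with $F(1)=9$.

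Parts \ref{lem243s2} and \ref{lem243s3} follow the same pattern, the norm-$3m$ value now being realised at the $\omega_2$- (resp.\ $\omega_3$-)level through the Type~3 representation $m=N_2(1+(x-1)^7t(x))$ (resp.\ $m=N_3(1+(x-1)^7t(x))$), $3\nmid t(1)$, coming from Lemma~\ref{lem3norms}, with $1-x$ placed at the other two levels and the masks $\Phi_{27}(x)$ (and, for \ref{lem243s3}, $\Phi_9(x)\Phi_{27}(x)$) inserted exactly as in Lemma~\ref{lem9norm}. Here one uses that $(\omega_i-1)^{\phi(3^i)}$ equals $3$ up to a unit to absorb the high powers of $3$ occurring in these representations into powers of $\omega_i-1$, keeping each $N_i(f)$ pinned to its exact value; and $3\nmid t(1)$ is again exactly the condition guaranteeing the correction terms deposit a single factor $3^2$, and no more, into $f(1)$.

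I expect the principal difficulty to be the $3$-adic bookkeeping: one must hold all three of $N_1(f)$, $N_2(f)$, $N_3(f)$ at their exact prescribed values \emph{and} force $9\parallel f(1)$, and one must check that the auxiliary polynomials can really be taken over $\mathbb Z$ rather than just over $\mathbb Z[\omega_i]$. The most delicate point arises in \ref{lem243s1} when $m\not\equiv 1\bmod 9$ (so $\delta=2$ or $4$): the naive choice of $1-x$ as the base at $\omega_2,\omega_3$ works only for $\delta=1$, and accommodating the other residue classes — without losing the norm-$3$ property at $\omega_2,\omega_3$ — requires the extra $\Phi_{27}$-mask term, or a cyclotomic-unit twist, to carry the discrepancy, making this case genuinely more involved than, say, the $p=3$ analogue of Lemma~\ref{repping}.
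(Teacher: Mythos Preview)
Your plan is the paper's approach: build an $f(x)$ whose values at $\omega_1,\omega_2,\omega_3$ realise the three prescribed norms, then apply \eqref{lambdered} with $\ell,\lambda$ chosen so that $F(1)=9$; the Type~1 (resp.\ Type~3) hypothesis is exactly what makes $9\parallel f(1)$.  The paper's explicit constructions are
\[
f(x)=\delta(x)(1-x)+\Phi_9(x)\Phi_{27}(x)\bigl(A+B(x-1)\bigr),\qquad \delta(x)\in\{1,\ x^3+1,\ (x^3+1)^2\},
\]
for (i), $f(x)=1-x+\Phi_{27}(x)(x-1)^2t(x)+x\Phi_9(x)\Phi_{27}(x)t(x)$ for (ii) (after rewriting the Type~3 datum as $3m=N_2(1-x+3(x-1)^2t(x))$, $3\nmid t(1)$, via $(x-1)^6\equiv 3s(x)\bmod\Phi_9(x)$ with $s(1)=-1$), and an analogous $f$ for (iii).

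One point to flag.  Of your two proposed remedies for $\delta\neq 1$ in part~(i), only the second works.  A $\Phi_{27}$-mask contributes $3c(\omega_1)$ at $\omega_1$, and to convert $(1-\omega_1)$ into $\delta(1-\omega_1)$ one would need $3c(\omega_1)\equiv(\delta-1)(1-\omega_1)\bmod 9\mathbb Z[\omega_1]$; since $1-\omega_1\notin 3\mathbb Z[\omega_1]$ this is impossible for $\delta=2,4$.  The ``cyclotomic-unit twist'' is the correct option and is precisely the paper's $\delta(x)=(x^3+1)^k$: because $\omega_2^3$ and $\omega_3^3$ are primitive $3$rd and $9$th roots of unity one checks $N_2(x^3+1)=N_3(x^3+1)=1$, so the twist leaves $N_2=N_3=3$ intact while giving $\delta(1)=2^k=\delta$ and $\delta(\omega_1)=\delta$.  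With that device no separate $\Phi_{27}$-mask is needed in (i); only the $\Phi_9\Phi_{27}$-mask appears.
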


\begin{proof}
\ref{lem243s1} Suppose that $m$ is a $3$-norm of Type~1.
Then $3m=N_1(\delta(1-x)+9A+9B(x-1))$ with $3\nmid A$ by Lemma~\ref{lem3norms}.
Take a positive integer $\ell$ and integer $\lambda$ such that $A\ell =1+3\lambda, $ and set $\delta(x)=1$, $x^3+1$ or $(x^3+1)^2$
as $\delta=1$, $2$ or $4$. Let $F(x)$ be as in \eqref{lambdered} with
$$ f(x)= \delta(x)(1-x)+\Phi_9(x)\Phi_{27}(x) (A+B(x-1)). $$
This has $N_3(F)=N_2(F)=3$, $N_1(F)=3m$ and $F(1)=9A\ell-27\lambda=9.$

\vspace{1ex}
\noindent
\ref{lem243s2} Suppose that $m$ is a $9$-norm of Type~3.
Then $3m=N_2(1-x+3(x-1)^2 t(x))$, $3\nmid t(1)$.
Taking $\ell\geq 1$ and $\lambda$ such that $\ell t(1)=1+3\lambda$ and \eqref{lambdered} with 
$$ f(x)=1-x + \Phi_{27}(x)(x-1)^2 t(x)  +x\Phi_9(x)\Phi_{27}(x) t(x), $$
we have $N_3(F)=3$, $N_2(F)=3m$, $N_1(F)=3$, $F(1)=9.$

\vspace{1ex}
\noindent
\ref{lem243s3} Suppose that $m$ is a $27$-norm of Type~3 so that we can write $3m=N_3(1-x+(1-x)^8h(x))$, $3\nmid h(1)$.
With $(\omega_2-1)^6=3t(\omega_2)$ we take $\ell\geq 1$ with $-\ell h(1)t(1)=1+3\lambda$ and \eqref{lambdered} with
\begin{align*}
f(x) &=(1-x)+(1-x)^8h(x) -\Phi_{27}(x)t(x)h(x)(1-x)^2\\
&\qquad -x\Phi_9(x)\Phi_{27}(x)h(x)(t(x)+x(1-x)^4). 
\end{align*}
Then $N_3(F)=3m,$  $N_2(F)=3$, $N_1(F)=3,$ $F(1)=9.$
\end{proof}

\begin{lemma}\label{lempf3normrep}
Suppose $q\neq3$ is prime, $i\geq1$, and $f$ is the minimal positive integer with $q^f\equiv1\bmod 3^i$.
Then 
\begin{enumerate}[label=(\roman*)]
\item\label{lempf3normrep1}
$q^f$ cannot have $3^i$-norm representations of both Type~1 and Type~2. 
\item\label{lempf3normrep2}
A product of $q^f$ of Type~2 must be Type~2, a product of one Type~1 and the rest Type~2 will be Type~1.
A product of two or more Type~1 will have both Type~1 and Type~2 representations.
Analogous statements hold for Types~3 and~4.

\end{enumerate}
\end{lemma}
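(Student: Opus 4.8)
The plan is to follow the template of Lemma~\ref{Type1or2}, with Lemma~\ref{notboth} playing the role that Lemma~\ref{lemNormRep}\ref{lemNormRep2} did there. The underlying observation is that, for $\alpha\in\mathbb Z[\omega_i]$ not divisible by $\pi_i=1-\omega_i$, whether $\alpha$ \emph{has a Type~1 (resp.\ Type~2, Type~3, Type~4) representation} depends only on $\alpha$ up to multiplication by units of $\mathbb Z[\omega_i]$: the forms produced in Lemma~\ref{27normrep} already quantify existentially over the unit multiplier, and for $i=1$ the multipliers $\pm\omega_1^J$ are all the units of $\mathbb Z[\omega_1]$, so replacing $\alpha$ by $v(\omega_i)\alpha$ with $v$ a unit changes nothing. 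By Lemma~\ref{notboth}\ref{notboth3} the classification is also the same for every conjugate $f(\omega_i^k)$, $3\nmid k$, and by Lemma~\ref{notboth}\ref{notboth1} and~\ref{notboth2} no such $\alpha$ can carry both a Type~1 and a Type~2 representation, nor (when it is Type~2) both a Type~3 and a Type~4 one.

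For part~\ref{lempf3normrep1}: since $q\neq3$, the order of $q$ modulo $3^i$ is exactly $f$, so every prime of $\mathbb Z[\omega_i]$ above $q$ has residue degree $f$ and absolute norm $q^f$; hence whenever $N_i(g)=q^f$ the ideal $(g(\omega_i))$ must be one such prime. If $q^f=N_i(g_1)=N_i(g_2)$ with $g_1(\omega_i)$ Type~1 and $g_2(\omega_i)$ Type~2, then $(g_1(\omega_i))$ and $(g_2(\omega_i))$ are primes above $q$, hence Galois conjugate, so $g_2(\omega_i)$ is a unit times a conjugate of $g_1(\omega_i)$. By the observation above $g_2(\omega_i)$ then inherits a Type~1 representation from $g_1(\omega_i)$, and together with its assumed Type~2 representation this contradicts Lemma~\ref{notboth}\ref{notboth1}. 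The same argument with Lemma~\ref{notboth}\ref{notboth2} gives the Type~3/Type~4 uniqueness behind the parenthetical ``analogous statements'' in part~\ref{lempf3normrep2}.

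For part~\ref{lempf3normrep2} the engine is a product rule: if $\alpha$ has a Type~2 representation and $\beta$ has a Type~1 or Type~2 one, then $\alpha\beta$ has a representation of the same type as $\beta$. For $i\geq2$ one multiplies $u_1(\omega_i)\alpha=1+(\omega_i-1)^5h_1(\omega_i)$ by $u_2(\omega_i)\beta=1+(\omega_i-1)^5h_2(\omega_i)$ to get $1+(\omega_i-1)^5h_3(\omega_i)$ with $h_3(1)\equiv h_1(1)+h_2(1)\bmod 3$, so $3\mid h_1(1)$ forces $3\mid h_3(1)\iff 3\mid h_2(1)$; the identical computation with the $(\omega_i-1)^7$-forms \eqref{type56} settles Types~3 and~4. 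Iterating, a product of $q^f$ all of Type~2 (or the empty product $m=1$, a unit, which is Type~2 by Lemma~\ref{notboth}\ref{notboth4}) is Type~2, and a product of one Type~1 and the rest Type~2 is Type~1 since each Type~2 factor preserves the Type~1 of the running product. For two or more Type~1 factors, write $m=m_1m_2$ with $m_1=g_1(\omega_i)$ a single Type~1 prime power and $m_2=g_2(\omega_i)$ carrying a Type~1 representation (inductively when $m_2$ has $\geq2$ factors); then $m=N_i(P_j)$ for $P_j(x)=g_1(x^{\,j})g_2(x)$, $j\in\{1,2\}$, and since conjugating $g_1$ by $j$ multiplies its key coefficient $h_1(1)$ by $j^5\equiv j\bmod 3$, the key coefficient of $P_j(\omega_i)$ is $\equiv j\,h_1(1)+h_2(1)\bmod 3$; the two values at $j=1,2$ are distinct and sum to $-h_2(1)\not\equiv 0\bmod 3$, so exactly one is $0$, realizing $m$ once as a Type~2 and once as a Type~1 norm.

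The main obstacle is the bookkeeping when $i=1$, where the leading coefficient $\delta\in\{1,2,4\}$ of \eqref{3normforma} is not normalized to $1$. After multiplying two such forms the leading term is $\delta_1\delta_2$, and renormalizing it to $\pm\delta_{12}$ (forced by $N_1(\alpha\beta)\equiv\delta_{12}^2\bmod 9$) by a unit $\pm\omega_1^J$ shifts the key $(\omega_1-1)$-coefficient. The point to verify is that, using $(\omega_1-1)^2=-3\omega_1$, every cross term in the product is already a multiple of $3$, so the product has the form $\delta_1\delta_2+3(\delta_1A_2+\delta_2A_1)(\omega_1-1)+9(\cdots)$ and the renormalization only multiplies the coefficient $A$ by a unit mod $3$; since $3\nmid\delta$, the product rule goes through, as does the toggle step, where the nontrivial conjugate now sends $A_1\mapsto-A_1$ by \eqref{i=1}. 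A small point worth stating is that replacing a factor by a Galois conjugate and reordering leaves $N_i$ unchanged, so the elements constructed in the last step do represent the same integer $m$.
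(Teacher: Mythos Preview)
Your proof is correct and follows essentially the same approach as the paper: both reduce part~\ref{lempf3normrep1} to Lemma~\ref{notboth} via the fact that any two elements of norm $q^f$ differ by a unit times a Galois conjugate, and both prove part~\ref{lempf3normrep2} by the additive product rule $h_3(1)\equiv h_1(1)+h_2(1)\bmod 3$ (resp.\ $A_3\equiv \delta_1A_2+\delta_2A_1\bmod 3$ when $i=1$), toggling the conjugate of one Type~1 factor to hit both residues. The only cosmetic differences are that you treat $i\geq 2$ first and handle $i=1$ as a separate bookkeeping paragraph (the paper does the reverse), and that your notation briefly conflates the integer $m_j$ with the element $g_j(\omega_i)$, though the meaning is clear.
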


\begin{proof} 
\ref{lempf3normrep1} Suppose that  $q^f$ is the $3^i$-norm of a $g_1(\omega_i)$ with a Type~1 representation and $g_2(\omega_i)$ with a Type~2 representation.
Then up to units these must be conjugates and  $g_2(\omega_i^j)=u g_1(\omega_i)$ gives us a representation of both types for $g_1(\omega_i)$, contradicting Lemma~\ref{notboth}.

\vspace{1ex}
\noindent
\ref{lempf3normrep2} For $i=1$ the product of two $(\delta_j+ 3A_j(\omega_1-1)+9b_j),$ $j=1,2,$  will take the form
$\pm \delta_3+ 3(A_1\delta_2+A_2\delta_1)(\omega_1-1)+9(C\omega_1+D)$ where $\delta_1\delta_2\equiv \pm \delta_3 \bmod 9$, and if $3\mid A_1$ we will have $3\mid (A_1\delta_2+A_2\delta_1)$ if and only if $3\mid A_2$. If $3\nmid A_1A_2$ then the choice of sign determines whether $3\mid  (\pm A_1\delta_2+A_2\delta_1)$ and from \eqref{i=1} the choice of conjugate  gives both types. For $i\geq 2,$ expanding $h(x)=h(1)+\sum_{j\geq 1} a_j(x-1)^j$ it is clear that the 
product of $1+(x-1)^5h_1(x)$ and $1+(x-1)^5h_2(x)$
can be written $1+(x-1)^5h_3(x)$ with $h_3(1)=h_1(1)+h_2(1).$
Plainly $3\mid h_3(1)$ if $3$ divides both $h_1(1)$ and $h_2(1)$ and $3\nmid h_3(1)$
if $3$ divides only one of them.
Hence a product of $q^f$ of Type~2 will be Type~2.
Moreover  by Lemma~\ref{notboth}\ref{notboth3} this is independent of the conjugates used and  Lemma~\ref{notboth}\ref{notboth1} 
rules out this also being Type~1.
Similarly a product of one $q^f$ of Type~1 and the rest Type~2 will be Type~1 and not Type~2.
If $3$ divides neither then taking conjugates $x\mapsto x^k$
 in the first replaces $h_1(1)$ by $k^5 h_1(1)\equiv k h_1(1) \bmod 3$ and the choice of $k$ can make either $3\mid h_3(1)$ or $3\nmid h_3(1)$.
The argument is similar for Type~3 and Type~4.
\end{proof}

It remains to characterize the $3$-norm, $9$-norm  and $27$-norm types for the $q^f.$ 

\begin{lemma}\label{lem9normts}
Let $q$ be a prime.
\begin{enumerate}[label=(\roman*)]
\item\label{lem9normts1}
If $q\equiv -1 \bmod 3$ then $q^2$ is $3$-norm Type~2.
\item\label{lem9normts2}
If $q\equiv 1 \bmod 9$ then $q$ is $9$-norm  Type~1 if $q\in \mathcal{T}^{(3)}_1,$ Type~2 if $q\in \mathcal{T}^{(3)}_2$ and Type~3 or~4 as $q\in  \mathcal{T}^{(3)}_3$ or $\mathcal{T}^{(3)}_4$.
\item\label{lem9normts3}
If  $q\equiv 8 \bmod 9$ then $q^2$ is $9$-norm Type~2 and Type~4.
\item\label{lem9normts4}
If $q\equiv 4$ or $7 \bmod 9$ then $q^3$ is $9$-norm Type~2 and Type~4.
\item\label{lem9normts5}
If $q\equiv 2$ or $5 \bmod 9$ then $q^6$ is $9$-norm Type~2 and Type~4.
\item\label{lem9normts6}
A minimal $q^f$  is $27$-norm Type~1 or Type~3 if $q\equiv 1 \bmod 27,$ $f=1,$ with  $q\in \mathcal{T}^{(3)}_1$ or $ \mathcal{T}^{(3)}_3$ respectively. Otherwise $q^f$ is  $27$-norm Type~2 and 4.
\item\label{lem9normts7}
$1$ is $3$-norm Type~2, and for $i\geq 2$ is $3^i$-norm Type~2 and Type~4 only.
\end{enumerate}
\end{lemma}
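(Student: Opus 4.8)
The seven items of Lemma~\ref{lem9normts} are of two flavors: the "baseline" computations that pin down the type of $1$ (item \ref{lem9normts7}), and the classification of $q^f$ for each residue class of $q$ modulo a power of $3$. Since items \ref{lem9normts3}--\ref{lem9normts6} for the ``bad'' primes all assert \emph{Type~2 and Type~4}, and since (by Lemma~\ref{notboth}\ref{notboth4}) a unit is Type~2 and Type~4 while (by Lemma~\ref{lempf3normrep}\ref{lempf3normrep2}) Type-2-ness is preserved under the relevant products, the natural strategy is: first handle $1$, then handle $q\equiv -1\bmod 3$ (item \ref{lem9normts1}) by a Newman-lemma argument, then handle $q\equiv 1\bmod 9$ (item \ref{lem9normts2}) essentially by unwinding the definitions of $\mathcal T^{(3)}_j$ together with Lemma~\ref{lemlowernorm}, and finally deduce items \ref{lem9normts3}--\ref{lem9normts6} by exhibiting, for each of these $q$, an \emph{explicit} element of $\mathbb Z[\omega_i]$ of norm $q^f$ that visibly sits in a $(\omega_i-1)^7$-shaped Type~4 representation.

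First I would dispose of \ref{lem9normts7}: $1=N_1(1\cdot(1-x)^0\cdots)$ — more precisely $1 = N_1(\delta(1-x)+9A+9B(x-1))$ is unavailable with $\delta\in\{1,2,4\}$ giving $N_1=\delta^2$, so the right statement is that $3\cdot 1 = N_1(1-x)$, which is the representation $\delta(1-x)+9A+9B(x-1)$ with $\delta=1$, $A=B=0$, hence $3\mid A$, so $1$ is $3$-norm Type~2; and $1 = N_i(1)$ for $i\ge 2$ is the representation $1+(x-1)^7t(x)$ with $t=0$, so $3\mid t(1)$, giving Type~4 (and Type~2). That a unit cannot also be Type~1 or Type~3 is exactly Lemma~\ref{notboth}\ref{notboth4}. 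For \ref{lem9normts1}: if $q\equiv -1\bmod 3$ then $\left(\tfrac{-3}{q}\right)=1$ so $q$ splits in $\mathbb Q[\sqrt{-3}]=\mathbb Q[\omega_1]$ as a ramified... no — $q$ splits in the quadratic subfield, so $q^2 = N_1(f(\omega_1+\omega_1^{-1}))$ for a linear $f\in\mathbb Q[x]$, i.e.\ $q^2$ is represented by a \emph{real} element of $\mathbb Z[\omega_1]$. Writing its Type-1-or-2 representation $\pm\omega_1^J f = \delta + 3A(\omega_1-1)+9b$ and using that $f=\bar f$, comparison with the conjugate formula \eqref{i=1} forces $(1-\omega_1)^3\parallel(\omega_1^J-\omega_1^{-J})$ if $3\nmid A$, which contradicts Lemma~\ref{Newman} unless $3\mid A$; so $q^2$ is $3$-norm Type~2. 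This mirrors Lemma~\ref{primepowers}\ref{primepowers2}.

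For \ref{lem9normts2}, the point is purely definitional chasing: for $q\equiv 1\bmod 9$ we have $f=1$ and $q=N_1(\cdot)$; the sets $\mathcal T^{(3)}_1,\mathcal T^{(3)}_2$ are defined via the $N_1$-representation $\delta + 3A(x-1)+9B$ with $\delta=1$ (since $q\equiv 1\bmod 9$), and $3\nmid A$ versus $3\mid A$ is exactly ``$9$-norm Type~1 vs.\ Type~2'' once one checks (via Lemma~\ref{notboth}\ref{notboth1}, \ref{notboth}\ref{notboth3} and Lemma~\ref{lemlowernorm}\ref{lemlowernorm1}, which connects a $9$-norm representation down to the $N_1$ form $1-x+9(A+B(1-x))$) that the $3$-norm Type is well defined on primes and agrees with the $9$-norm Type. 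The $\mathcal T^{(3)}_3,\mathcal T^{(3)}_4$ sets are \emph{defined} by the $N_2$-representation $1+(x-1)^7h(x)$ with $3\nmid h(1)$ or $3\mid h(1)$, which is verbatim the definition of $9$-norm Type~3 vs.\ Type~4; so this item is immediate from the definitions plus Lemma~\ref{lempf3normrep}\ref{lempf3normrep1} to know it is unambiguous.

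Items \ref{lem9normts3}--\ref{lem9normts6} are where the real work sits, and I expect the main obstacle to be producing, case by case, a clean explicit witness of \emph{Type~4} for the minimal power $q^f$. For \ref{lem9normts3} ($q\equiv 8\bmod 9$, so $q^2\equiv 1\bmod 9$, $f=2$): here $q$ itself splits in the quadratic subfield of $\mathbb Q[\omega_2]$ (indeed $q^2$ is represented by a real element of $\mathbb Z[\omega_1]\subset\mathbb Z[\omega_2]$), and the symmetry/conjugation argument against Lemma~\ref{Newman}—applied now at the level $(\omega_2-1)^5$ and $(\omega_2-1)^7$—rules out Type~1 and Type~3, forcing Type~2 and Type~4. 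For \ref{lem9normts4} ($q\equiv 4,7\bmod 9$, $f=3$) I would start from a $3$-norm representation $q=N_1(\delta+3A(x-1)+9B)$ with $\delta=2$ or $4$, cube it using Lemma~\ref{reduction}'s explicit formula \eqref{defggg} to pass from $N_1$ to $N_2$ (so $q^3=N_2(g)$ with $g$ computed from the cube), and observe that cubing $\delta+3A(x-1)+9B$ produces $\delta^3 + \cdots$ with the correction terms all divisible by $3$ (indeed by $3(x-1)$ after using $3=(\omega_2-1)^6\cdot\text{unit}$), which after reducing as in Lemma~\ref{27normrep}\ref{27normrep2} lands in the $1+(x-1)^7t(x)$ shape with $3\mid t(1)$. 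Items \ref{lem9normts5} (sixth power) and \ref{lem9normts6} (the $27$-norm statement) follow the same template: raise a $3$-norm representation to the appropriate power, push through \eqref{defggg} the requisite number of times, and track the $3$-adic valuation of the key coefficient; the $q\equiv 1\bmod 27$ subcase of \ref{lem9normts6} again reduces to definition-chasing plus Lemma~\ref{lemlowernorm}. The bookkeeping for which coefficient must be divisible by $3$ after iterating \eqref{defggg}—and the fact that raising to the $q$-power-of-$3$ multiplies the relevant $\pi$-valuation so as to force Type~2/Type~4—is the delicate part, but it is the same mechanism already used in Lemma~\ref{primepowers}\ref{primepowers3} for the prime $5$, so I would model these computations on that proof.
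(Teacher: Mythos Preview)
Your overall architecture is right, but there are concrete missteps in several items.

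\textbf{Item \ref{lem9normts1}.} Your Legendre-symbol claim is false: for $q\equiv -1\bmod 3$ one has $\left(\tfrac{-3}{q}\right)=-1$, so $q$ is \emph{inert} in $\mathbb Q(\omega_1)$. That actually makes life easier, not harder: $q^2=N_1(\pm q)$ with $\pm q$ a rational integer, and choosing the sign so that $\pm q\equiv\delta\bmod 9$ with $\delta\in\{1,2,4\}$ gives the representation $\delta+3A(\omega_1-1)+9b$ with $A=0$, visibly Type~2. This is the paper's one-line argument. Your Newman route does not work as written: for $t=1$ the only odd $s$ allowed by Lemma~\ref{Newman} is $s=1$, and in fact $\omega_1^J-\omega_1^{-J}$ has $\pi$-valuation $1$ (or is $0$), not $3$, so there is no contradiction coming from Newman here.

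\textbf{Item \ref{lem9normts3}.} The right subfield is the maximal \emph{real} subfield $\mathbb Q(\omega_2+\omega_2^{-1})$, which has degree $3$, not the quadratic subfield $\mathbb Q(\omega_1)$. The paper uses that for $q\equiv -1\bmod 9$ the prime $q$ splits completely in $\mathbb Z[\omega_2+\omega_2^{-1}]$ and stays inert up to $\mathbb Z[\omega_2]$, so $q^2=N_2(f)$ with $f(\omega_2)$ real; then the conjugation/Newman argument at exponent $7$ gives Type~4 (hence Type~2). Your instinct to run Newman at exponents $5$ and $7$ is fine once you use the correct subfield.

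\textbf{Items \ref{lem9normts4}--\ref{lem9normts6}.} Lemma~\ref{reduction} and its formula \eqref{defggg} go the \emph{wrong direction} for what you want: they produce $g$ with $N_{i-1}(g)=N_i(F)$, i.e.\ they \emph{lower} the level. To pass from $N_1$ up to $N_2$ the paper uses the elementary substitution $x\mapsto x^3$: from $q=N_1(f(x))$ one gets $q^3=N_2(f(x^3))$, because the cubes of primitive $9$th roots hit each primitive $3$rd root three times. Then one expands using $3\equiv -(\omega_2-1)^6\bmod(\omega_2-1)^9$ to see the result is $1\mp(x-1)^6+(x-1)^9h_1(x)$ (Type~2), and a unit multiplication by $-\omega_2^3(1+\omega_2)^3$ or its square kills the $(x-1)^6$ term to reach Type~4. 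Item \ref{lem9normts5} is even simpler: $q^6=N_2(-q)$ with $-q\equiv 1\bmod 3$, and the same $(\omega_2-1)^6$ expansion plus unit trick applies. For \ref{lem9normts6} the paper again uses $x\mapsto x^3$ (now at the $27$-level) together with Lemma~\ref{lemlowernorm} and Lemma~\ref{lempf3normrep}\ref{lempf3normrep2}, exactly as in Lemma~\ref{primepowers}\ref{primepowers3}. So your template ``raise to a power and track valuations'' is correct, but the vehicle is $f(x)\mapsto f(x^3)$, not \eqref{defggg}.

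Items \ref{lem9normts2} and \ref{lem9normts7} in your sketch are essentially the paper's arguments.
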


\begin{proof}
\ref{lem9normts1} For $q\equiv 2 \bmod 3$ we have $q^2=N_1(\pm q)$ with $\pm q$ of the form $1+9k$, $2+9k$ or $4+9k,$ yielding a $3$-norm Type~2 representation in \eqref{1norms}.

\vspace{1ex}
\noindent
\ref{lem9normts2} and \ref{lem9normts3}
By Lemma~\ref{lemlowernorm}\ref{lemlowernorm1} determining whether $q^f$ is $9$-norm Type~1 or Type~2 reduces to determining the $3$-norm type.
When $q\equiv 1 \bmod 9$, a prime can be $9$-norm Type~1 or Type~2 and those of Type~2 can be Type~3 or Type~4.
For $q\equiv -1 \bmod 9$ we know that  $q^2$ is $3$-norm Type~2.
We know that such a prime splits into three factors of norm $q$ in $\mathbb  Z[\omega_2+\omega_2^{-1}]$ (e.g., \cite[ex.\ 4.12]{Marcus}), and this remains the factorization in $\mathbb Z[\omega_2].$ Hence we must have $q^2=N_2(f)$ with $f(\omega_2)$ real and
$$ u(\omega_2)f(\omega_2) = 1+(\omega_2-1)^7h(\omega_2),\;\;\; \overline{u(\omega_2)}f(\omega_2) = 1-(\omega_2-1)^7\omega_2^2\:\overline{h(\omega_2)}. $$
If $3\nmid h(1)$ then $(\omega_2-1)^7 \parallel (\overline{u(\omega_2)}-u(\omega_2))$, but this is ruled out by Lemma~\ref{Newman}. Hence $3\mid h(1)$ and $q^2$ is Type~4.

\vspace{1ex}
\noindent
\ref{lem9normts4}
We have $q=N_1(1\pm 3 +3(x-1)h(x))$ and $q^3=N_2(1\pm 3 +3(x^3-1)h(x^3)).$
Since $3=-(\omega_2-1)^6 \bmod (\omega_2-1)^9$ we can write $q^3=N_2(1\mp (x-1)^6 +(x-1)^9h_1(x)),$ plainly Type~2.
Moreover, multiplying by the unit $u_1=-\omega_2^3(\omega_2+1)^3=1-(\omega_2-1)^6 \bmod (1-\omega_2)^8$ or $u_1^2=1+(\omega_2-1)^6 \bmod (1-\omega_2)^8$ we can replace this by $1+(x-1)^8h_2(x)$ which is plainly Type~4.

\vspace{1ex}
\noindent
\ref{lem9normts5}
We have $q^6=N_2(-q)=N_2(1+3k)=N_2(1+(x-1)^6h(x)),$ a $9$-norm Type~2.
Moreover $-q=1-3+9k=1+(\omega_2-1)^6 \bmod (\omega_2-1)^9$ or $1-6+9k=1-(\omega_2-1)^6 \bmod (\omega_2-1)^9$.
Selecting $j=1$ or $2$ as appropriate we find $-u_1^j q=1 \bmod (\omega_2-1)^8$, so $q^6$ is $9$-norm Type~4.

\vspace{1ex}
\noindent
\ref{lem9normts6}
Suppose that $q^f$ is $27$-norm Type~1 or Type~3.
Then by Lemma~\ref{lemlowernorm} $q^f$ is $9$-norm Type~1 or~3 and  by Lemma~\ref{lempf3normrep}\ref{lempf3normrep2} the minimal power $q^{f'}\equiv 1 \bmod 9$ 
must  also be Type~1 or~3.
So $f'=1$, $q\equiv 1 \bmod 9$ with $q$ in $ \mathcal{T}^{(3)}_1$ or $ \mathcal{T}^{(3)}_3.$
If $q\equiv 1 \bmod 27$ then $f=1$ and we are done (the 27-norm type is the same as the 9-norm type).
Otherwise $q\equiv 10$ or $19 \bmod 27$ and $f=3.$
Now we can write
$q=N_{2}(1+(1-x)^5h(x)),$ and $q^3=N_3(1+(1-x^3)^5h(x^3)).$ Since $(1-x^3)^5\equiv (1-x)^{15} \bmod 3$,  and $3$ can be replaced by $(1-x)^{18}h_2(x),$ this gives $q^3$ a $27$-norm Type~4 representation, and a contradiction.
  
\vspace{1ex}
\noindent
\ref{lem9normts7} This is evident from \eqref{inorms} and Lemma~\ref{notboth}\ref{notboth4}.
\end{proof}

\begin{proof}[Proof of Theorem~\ref{thmZ27}]
\ref{thmZ27a} By Lemmas~\ref{lem9norm} and~\ref{lem9normts}, if $q\equiv 1 \bmod 9$ is Type~1, then we can achieve $3^4 q$ as a $\mathbb{Z}_{27}$-measure, and by closure under multiplication we can achieve $3^4 q m$ for any integer $m$ with $3\nmid m$.
It remains to show that if $3^4 m$ with $3\nmid m$ is achieved as a $\mathbb{Z}_{27}$-measure then it must be divisible by a Type~1 prime $q\equiv 1 \bmod 9$. More precisely we rule out $N_2(F)=3m_2,$ $N_3(F)=3m_3$ with $m_2$ only $9$-norm Type~2 and $m_3$ only 27-norm Type~2; in particular by  Lemmas~\ref{lempf3normrep} and~\ref{lem9normts} our $M_{27}(F)=3^4m$, $3\nmid m,$ 
must contain a Type~1 prime $q\equiv 1 \bmod 9$ dividing $N_2(F)$ or a Type~1 prime $q\equiv 1 \bmod 27$ dividing $N_3(F).$

Suppose that $m_3$ is only $27$-norm Type~2, then from Lemma~\ref{Lemma1} and the form \eqref{type56} for $F(\omega_3)/(1-\omega_3)$, and using $F(1)=3m_0,$ $3\nmid m_0,$ we may take
$$ F(x) = u_1(x)(1-x -(x-1)^8t(x) ) + m_0\Phi_{27}(x) + (1-x)\Phi_{27}(x)s(x) $$
with $s(x)\in\mathbb{Z}[x]$, where $u_1(x)$ is a product of units $u(x^j)$, $3\nmid j$, each $u(x)$ of the form \eqref{uform}.
But since $3\nmid m_0$ this makes
\begin{align*}  F(\omega_2) & = u_1(\omega_2) (1-\omega_2 -(\omega_2-1)^8t(\omega_2) )+3m_0+3(1-\omega_2)s(\omega_2) \\
 & = u_1(\omega_2) (1-\omega_2) \left(1 + (\omega_2-1)^5 h(\omega_2)\right), \;\; 3\nmid h(1),
\end{align*}
and $N_2(F)=3m_2$, where $m_2$ has a  $9$-norm representation of Type~1. 

\vspace{1ex}
\noindent
\ref{thmZ27b} Similarly, by Lemmas~\ref{lem243s} and~\ref{lem9normts} we can achieve $3^5mq,$ $3\nmid m,$ for any $q\equiv 1 \bmod 3$ of Type~1, or $q\equiv 1 \bmod 9$ of  Type~3.
By  Lemmas~\ref{lempf3normrep} and~\ref{lem9normts} any remaining $3^5m,$ $3\nmid m,$ must have $N_i(F)=3m_i$ 
with $m_1$  only $3$-norm Type~2,  $m_2$  only $9$-norm Type~4 and $m_3$ only $27$-norm Type~4.
Hence with $F(1)=3m_0$, $3\nmid m_0,$ we can take 
$$ F(x)=u_1(x)(1-x- (x-1)^9h(x)) + 3m_0 \Phi_{27}(x) +(x-1)\Phi_{27}(x) t(x) $$
where $u_1(x)$ is a product of units $u(x^j)$ of the form \eqref{uform} with $3\nmid j$.
Since these are units  for $\omega_2$ we can replace $t(x)$ by $t_1(x)u_1(x) \bmod \Phi_9(x)$ and take
\begin{align*}
 F(x) &= u_1(x)\left(1-x- (x-1)^9h(x) + (x-1)\Phi_{27}(x)t_1(x)\right)  + 3m_0 \Phi_{27}(x) \\
 &\qquad +(x-1)\Phi_9(x)\Phi_{27}(x)t_2(x).
\end{align*}
Expanding $t_1(x)=a_0+a_1(x-1)+ (x-1)^2t_4(x)$ we get 
$$ N_2(F)= N_2\left( 1-x+3a_0(x-1)+3a_1(x-1)^2+ 3(x-1)^3h_1(x)\right) $$
and
$$ N_1(F)=N_1\left( 1-x+3a_0(x-1) -9a_1   +9m_0u_1(1) +9(x-1)h_2(x) \right).$$
We can reduce $a_0=0,\pm 1 \bmod 3$ and get
$$ N_1(F)=N_1\left(\pm \delta ( 1-x)-9a_1   +9m_0u_1(1) +9(x-1)h_3(x) \right),$$
and we can deduce that $3\nmid a_1$, since $m_1$ is not Type~1.
If $3\mid a_0$ then $3\nmid a_1$ contradicts that $m_2$ is not Type~3.
If $3\nmid a_0$, using $3 \equiv -(\omega_2-1)^6 \bmod (\omega_2-1)^9$, and multiplying by  $-x^3(1+x)^3 \equiv 1-(x-1)^6 \bmod 3(x-1)^2$, or $x^6(1+x)^6 \equiv 1+(x-1)^6 \bmod 3(x-1)^2$, we obtain a Type~3:
\begin{align*}  N_2(F) &= N_2\left( 1-x\pm (x-1)^7+3a_1(x-1)^2+ 3(x-1)^3h_4(x)\right) \\
  & =N_2 \left( 1-x+3a_1(x-1)^2+ 3(x-1)^3h_5(x)\right),
\end{align*}
which is again a contradiction. 
\end{proof}

In the proof of Theorem~\ref{thmZ25}, we showed that any $M_{25}(F)=5^3m$, $5\nmid m,$ must have $m$ divisible by a (Type~1) prime $q\equiv 1 \bmod 5$ dividing $N_1(F)$, or a (Type~1) prime $q\equiv 1 \bmod 25$ dividing $N_2(F).$
Similarly in the proof of Theorem~\ref{thmZ27} we actually proved that any $M_{27}(F)=3^4m$, $3\nmid m,$ must have $m$ divisible by a (Type~1) prime $q\equiv 1 \bmod 9$ dividing
$N_2(F)$ or a (Type~1) prime $q\equiv 1 \bmod 27$ dividing $N_3(F)$, and any $M_{27}(F)=3^5m$, $3\nmid m,$ must have $m$ divisible by a (Type~1) prime $q\equiv 1 \bmod 3$ dividing
$N_1(F)$, or a (Type~1 or~3) prime $q\equiv 1 \bmod 9$ dividing $N_2(F)$, or a (Type~1 or~3) prime $q\equiv 1 \bmod 27$ dividing $N_3(F)$.
It is tempting to ask whether one actually needs powers $q_i^{a_i}$ of primes $q_i\not\equiv 1 \bmod p^i$ in Theorem~\ref{thmpowers}.

\begin{question}
In Theorem~\ref{thmpowers}\ref{Zpta} can we guarantee a prime factor $q_i\equiv 1 \bmod 3^i$
in $2t-j$ of the $N_i(F)$,  $1\leq i\leq t$, and similarly for $t-2$ of  the $N_i(F)$, $2\leq i\leq t$, in \ref{Zptb}?
\end{question}

In the next section we explore this question computationally.

\section{Computations}\label{secComputations}

This research proceeded in concert with a number of experiments and computations on integral circulant determinants.
For example, we computed the $\mathbb{Z}_{27}$-measure of all polynomials $F(x)$ with $\{-1,0,1\}$ coefficients, degree at most $26$ and $F(1)=3$ or $9$.
This found over a million different values of $m$ for which there was a polynomial $F(x)$ with $F(1)=\abs{N_1(F)}=\abs{N_2(F)}=3$ and $\abs{N_3(F)}=3m$ with $3\nmid m$, so $\abs{M_{\mathbb{Z}_{27}}(F)}=3^4m$, and more than $5.5$ million where $F(1)=9$, $\abs{N_1(F)}=\abs{N_2(F)}=3$, and $\abs{N_3(F)}=3m$, $3\nmid m$, so $\abs{M_{\mathbb{Z}_{27}}(F)}=3^5m$.
In each case the integer $m$ always possessed at least one prime divisor $q \equiv 1 \bmod 27$.
Such observations led to the development of some of the results in this paper.

Here we report briefly on some additional computations related to the topics of this research.
For a number of groups $\mathbb{Z}_{p^t}$, we computed $M_{\mathbb{Z}_{p^t}}(F)$ for certain families of polynomials $F(x)\in\mathbb{Z}[x]$ having $p\mid F(1)$ in order to investigate the multiples of $p$ attained in $\mathcal{S}(\mathbb{Z}_{p^t})$.
We describe some computations connected to each of our three main results here.

\subsection{Computations connected to Theorem~\ref{thmpowers}}\label{subsecCalcs1}

Some computations related to our first main result indicated that odd values for $a_i$ larger than $1$ did not appear to be necessary for several families.
In $\mathbb{Z}_{81}$, by Lemma~\ref{Lemma1}, if $M_{\mathbb{Z}_{81}}(F) = 3^7m$ with $3\nmid m$, then $27 \parallel F(1)$ and $3 \parallel N_i(F)$ for $1\leq i\leq4$.
A search over $\mathbb{Z}_{81}$ to study such $m$, similar to the ones performed here for $\mathbb{Z}_{25}$ and $\mathbb{Z}_{27}$, would be well out of reach: there are more than $2\cdot10^{20}$ polynomials $F(x)$ with $\deg(F)\leq80$, $F(1)=27$ and nonzero constant and linear coefficients, when using just $\{0,1\}$ coefficients.
However, a more directed search allowed us to determine a substantial number polynomials with $M_{\mathbb{Z}_{81}}(F)=3^7 m$ and $3\nmid m$.
For this we first determined some restrictions on the coefficients to guarantee that $N_1(F) = N_2(F) = 3$ as well as $F(1)=27$.
For the $N_1$ restriction, one may show by elementary means that a quadratic polynomial $f(x)=c_0+c_1x+c_2x^2\in\mathbb{Z}[x]$ with $f(1)=27$ and $N_1(f)=3$ must have $\{c_0,c_1,c_2\} = \{8,9,10\}$.
We set $c_0=10$, $c_1=9$ and $c_2=8$.
Next, let $g(x)=b_0+b_1 x+\cdots+b_8 x^8$: we require that $N_2(g)=3$ and simultaneously $b_0+b_3+b_6=10$, $b_1+b_4+b_7=9$,  $b_2+b_5+b_8=8$.
These requirements produce a polynomial equation in six variables $h(b_0,\ldots,b_5)=0$ for which any solution in integers gives rise to a polynomial $g(x)$ having $g(1)=27$ and $N_1(g)=N_2(g)=3$.
This polynomial $h$ has $90$ solutions for which $0\leq b_i\leq9$ for $0\leq i\leq8$ and $b_0b_1>0$.
(We attach the last restriction to ensure that the polynomials $F(x)$ that we construct in the subsequent step have nonzero constant and linear terms.)
Each such solution $g(x)$ allows us to create a family of polynomials $F(x)$ with $\{0,1\}$ coefficients and $\deg{F}\leq80$ having $F(1)=27$ and $N_1(F)=N_2(F)=3$.
For example, the polynomial
\[
g(x) = 5 + x + 6x^2 + 5x^4 + x^5 + 5x^6 + 3x^7 + x^8
\]
gives rise to $9^2\binom{8}{4}\binom{9}{3}^2\binom{9}{4}^2 = 635159387520$ qualifying $F(x)$ having the form $1+x+\sum_{k=2}^{80} a_k x^k$ with $\{0,1\}$ coefficients: one selects four of $\{a_9,a_{18},a_{27},\ldots,a_{72}\}$ to set to $1$, and six of $\{a_2,a_{11},a_{20},\ldots,a_{74}\}$, etc.
Constructing all such polynomials $F(x)$ over all $90$ solutions is still prohibitive, since this amounts to about $4.2\cdot10^{17}$ possibilities in all, but we tested a portion of the polynomials $F(x)$ in each case to search for examples where $\abs{N_3(F)}=3$ and $\abs{N_4(F)}=3m$ with $3\nmid m$, so that $\abs{M_{\mathbb{Z}_{81}}(F)} = 3^7m$.
We constructed more than half a million positive $\mathbb{Z}_{81}$-measures of this form by using this procedure.
All $m$ had a prime factor $q\equiv 1 \bmod 81.$

A similar computation was performed for $\mathbb{Z}_{49}$, where we constructed a large family of polynomials with $7\mid F(1)$ and $N_1(F)=7$ in a similar way.
Requiring that a polynomial $g(x)=b_0+b_1 x+\cdots+b_6 x^6$ has $g(1)=7k$ for a small integer $k$ and $N_1(g)=7$ produces a polynomial equation in the coefficients $\{b_0, \ldots, b_5\}$, and we may determine solutions to this equation in nonnegative integers (requiring again that $b_0b_1 > 0$).
There are $82$ such solutions with $g(1)=7$, and $298$ with $g(1)=14$.
As with $\mathbb{Z}_{81}$, each such solution $g(x)$ allowed us to create a family of polynomials $F(x)$ with $\{0,1\}$ coefficients and $\deg{F}\leq48$ having $F(1)=7$ or $14$ and $N_1(F)=7$.
This procedure led us to find more than $14000$ different values of $m$ where $\abs{M_{\mathbb{Z}_{49}}(F)}=7^3m$, $7\nmid m$, is achieved by a polynomial $F(x)$ with $\{0,1\}$ coefficients having $F(0)=F'(0)=1$, degree at most $48$, $F(1)=N_1(F)=7$ and $\abs{N_2(F)}=7m$.
We also constructed more than $51$ million values of $m$ where the $\mathbb{Z}_{49}$-measure $2\cdot7^3m$, $7\nmid m$, is achieved in a similar way by a polynomial $F(x)$ with $F(1)=14$, $N_1(F)=7$ and $\abs{N_2(F)}=7m$.
In both cases every such integer $m$ had at least one prime divisor $q \equiv 1 \bmod 49$.

It is interesting that in both of these groups using only $a_i=1$ in Theorem~\ref{thmpowers} sufficed; larger odd powers of primes were never observed to be necessary.
This was established for $\mathbb{Z}_{25}$ and $\mathbb{Z}_{27}$, as we remarked at the end of Section~\ref{secPf27}.
It seems possible then that a somewhat stronger version of Theorem~\ref{thmpowers} may hold, with further restrictions on the $a_i$.
We leave this question to future research.

\subsection{Computations connected to Theorem~\ref{thmZ25}}\label{subsecCalcs2}

In connection with Theorem~\ref{thmZ25}, we computed $M_{\mathbb{Z}_{25}}(F)$ for all polynomials $F(x)$ with $\deg(F)\leq24$ having $\{-1,0,1\}$ coefficients and $F(1)=5$.
This produced polynomials attaining the measures $5^3q$ for nearly half of the primes $q<5000$ in $\mathcal{T}_1^{(5)}$ with $q=1\bmod5$, including all such primes less than $251$.
Simple representations for the first few primes are
\begin{align*}
11\cdot5^3 &= M_{\mathbb{Z}_{25}}(1+x^4+x^5+x^9+x^{10}),\\
31\cdot5^3 &= M_{\mathbb{Z}_{25}}(-1+x+x^6+x^9+x^{11}+x^{13}+x^{16}),\\
41\cdot5^3 &= M_{\mathbb{Z}_{25}}(1+x^5+x^{10}+x^{14}+x^{15}),\\
61\cdot5^3 &= M_{\mathbb{Z}_{25}}(-1+x+x^6+x^8+x^{11}+x^{12}+x^{16}),\\
71\cdot5^3 &= M_{\mathbb{Z}_{25}}(-1-x+x^4-x^5+x^7+x^9+x^{11}+x^{12}+x^{13}+x^{14}+x^{18}).
\end{align*}

\subsection{Computations connected to Theorem~\ref{thmZ27}}\label{subsecCalcs3}

For Theorem~\ref{thmZ27}, our search in $\mathbb{Z}_{27}$ over polynomials $F(x)$ with $\deg(F)\leq26$ having $\{-1,0,1\}$ coefficients and $F(1)=3$ or $9$ found polynomials attaining each measure $3^4q$ for all primes $q<5000$ in $\mathcal{T}_1^{(3)}$ with $q=1\bmod9$, including
\begin{align*}
19\cdot3^4 &= M_{\mathbb{Z}_{27}}\left(1+x^8+x^9\right),\\
37\cdot3^4 &= M_{\mathbb{Z}_{27}}\left(-1+x^2+x^3+x^{10}+x^{11}\right),\\
109\cdot3^4 &= M_{\mathbb{Z}_{27}}\left(1+x^3+x^4\right).
\end{align*}
It also found representations for $3^5q$ for all primes $q<5000$ in $\mathcal{T}_3^{(3)}$ with $q=1\bmod9$, for example,
\begin{align*}
73\cdot3^5 &= M_{\mathbb{Z}_{27}}\left(1+x^2+x^3+x^4+x^5+x^7+x^9+x^{11}+x^{12}\right),\\
271\cdot3^5 &= M_{\mathbb{Z}_{27}}\left(1+x+x^2+x^3+x^4+x^5+x^6+x^8+x^9\right),\\
307\cdot3^5 &= M_{\mathbb{Z}_{27}}\left(-1+x+x^4+x^5+x^6+x^7+x^8+x^{10}+x^{12}+x^{14}+x^{15}\right),
\end{align*}
as well as $3^5q$ for the smallest $q=1\bmod3$ in $\mathcal{T}_1^{(3)}$:
\begin{align*}
7\cdot3^5 &= M_{\mathbb{Z}_{27}}\left(1+x^2+x^3+x^5+x^6+x^8+x^9+x^{11}+x^{12}\right),\\
13\cdot3^5 &= M_{\mathbb{Z}_{27}}\left(1+x^3+x^6+x^9+x^{12}+x^{14}+x^{15}+x^{17}+x^{18}\right),\\
19\cdot3^5 &= M_{\mathbb{Z}_{27}}\left(1+x^3+x^6+x^9+x^{12}+x^{15}+x^{18}+x^{20}+x^{21}\right),\\
31\cdot3^5 &= M_{\mathbb{Z}_{27}}\left(-1-x+x^2-x^3+x^5-x^6+x^7+x^8+x^{10}+x^{11}\right.\\
&\qquad\qquad\quad\left.+x^{12}+x^{13}+x^{14}+x^{17}+x^{20}+x^{23}+x^{26}\right).
\end{align*}

\section*{Acknowledgements}

We thank Michel Marcus for pointing out the connection between $\mathcal{T}^{(5)}_2$ and the artiads.

\end{document}